\documentclass[a4paper,reqno,oneside]{amsart}
\usepackage{a4wide}
\usepackage{amsmath,amsfonts,amssymb,amsthm}
\usepackage[utf8]{inputenc}
\usepackage{ifthen}
\usepackage[style=american]{csquotes}
\usepackage{color}
\usepackage{graphicx}
\usepackage{hyperref}
\usepackage[shortlabels]{enumitem}
\usepackage[english,algoruled,lined,noresetcount,norelsize]{algorithm2e}
\usepackage{framed}
\usepackage{verbatim}
   \usepackage{graphics}

\RequirePackage{hyperref}

\title[A Dirac-type theorem for Berge cycles in random  hypergraphs]{A Dirac-type theorem for Berge cycles\\ in random  hypergraphs}

\author[D.~Clemens]{Dennis Clemens}
\address{Technische Universität Hamburg-Harburg, Institut f\"ur Mathematik, Am Schwarzenberg-Campus 3, 21073 Hamburg, Germany}
\email{dennis.clemens@tuhh.de}
\author[J.~Ehrenm\"uller]{Julia Ehrenm\"uller}
\address{Ehrenm\"uller GmbH, Keselstra\ss e 16, 87435 Kempten, Germany }
\email{julia@ehrenmueller.ai}
\author[Y.~Person]{Yury Person}
\address{Institut f\"ur Mathematik, Technische Universit\"at Ilmenau, 98684 Ilmenau, Germany}
\email{yury.person@tu-ilmenau.de}
\thanks{YP was supported by DFG grant PE 2299/1-1.\\
   An extended abstract of this paper appeared in the Proceedings of Discrete Mathematics Days
    2016 (Barcelona)~\cite{CEP16}.}

\newtheorem{theorem}{Theorem}
\newtheorem{lemma}[theorem]{Lemma}
\newtheorem{proposition}[theorem]{Proposition}
\newtheorem{corollary}[theorem]{Corollary}
\newtheorem{conjecture}[theorem]{Conjecture}
\newtheorem{remark}[theorem]{Remark}

\theoremstyle{definition}
\newtheorem{definition}[theorem]{Definition}

\newcommand{\oldqed}{}
\def\endofClaim{\hfill\scalebox{.6}{$\Box$}}

\newcommand{\Exp}{{\mathbb{E}}}
\newcommand{\Prob}{{\fam=5 P}}

\newcommand{\eps}{\varepsilon}

\newcommand{\vertices}{V^\ast}
 
\newcommand{\cH}{\mathcal H}

\newcommand{\cS}{\mathcal S}
\newcommand{\cP}{\mathcal P}
\newcommand{\PP}{\mathbb{P}}
\newcommand{\EE}{\mathbb{E}}
\newcommand{\NN}{\mathbb{N}}

\newcommand{\dcup}{\dot\cup}
\newcommand{\polylog}{\mathrm{polylog}}

\newcommand{\Hrnp}{H^{(r)}(n,p)}

\DeclareMathOperator{\Bin}{Bin}

\newcommand{\kk}{17r} 

\newcommand{\tr}{\textcolor{red}}
\newcommand{\tb}{\textcolor{blue}}

\date{\today}
\begin{document}
\begin{abstract}
A Hamilton Berge cycle of a hypergraph on $n$ vertices is an alternating sequence $(v_1, e_1, v_2, \ldots, v_n, e_n)$ of distinct vertices $v_1, \ldots, v_n$ and distinct hyperedges $e_1, \ldots, e_n$ such that $\{v_1,v_n\}\subseteq e_n$ and $\{v_i, v_{i+1}\} \subseteq e_i$ for every $i\in [n-1]$. We prove the following Dirac-type theorem about Berge cycles in the binomial random $r$-uniform hypergraph $H^{(r)}(n,p)$: for every integer $r \geq 3$, every real $\gamma>0$ and 
$p \geq \frac{\ln^{\kk} n}{n^{r-1}}$ asymptotically almost surely,  every spanning subgraph $H \subseteq H^{(r)}(n,p)$ with  minimum vertex degree $\delta_1(H) \geq \left(\frac{1}{2^{r-1}} + \gamma\right) p \binom{n}{r-1}$ contains a Hamilton Berge cycle. The minimum degree condition is asymptotically tight and the bound on $p$ is optimal up to some polylogarithmic factor.  
\end{abstract}
\maketitle

\section{Introduction}

Many classical theorems of extremal graph theory give sufficient optimal minimum degree conditions 
for graphs to contain copies of large or even spanning structures. 
Lately it became popular to phrase such extremal results in terms of local resilience, where the \emph{local resilience} of a graph $G$ with respect to a given monotone increasing graph property $\cP$ is defined as the minimum number $\rho\in \mathbb R$ such that one can obtain a graph without property $\cP$ by deleting at most $\rho \cdot \deg(v)$ edges from every vertex $v\in V(G)$. For instance, using this terminology, Dirac's theorem~\cite{dirac1952} says that the local resilience of the  complete graph $K_n$ with respect to Hamiltonicity is $1/2-o(1)$. 

In recent years, an active and fruitful research direction in extremal and probabilistic combinatorics has become the study of resilience of random and pseudorandom structures. The systematic study of those with respect to various graph properties was initiated by Sudakov and Vu in~\cite{SudVu}, who in particular proved that  $G(n,p)$ (i.e.~the Erd\H{o}s-Renyi random graph model that is defined on the vertex set $[n]$ with each pair of vertices forming an edge randomly and independently with probability $p$)  has resilience at least $1/2-o(1)$ with respect to Hamiltonicity a.a.s.~for  $p> \ln^4n/n$.
 This result was improved by  Lee and Sudakov~\cite{lee2012dirac} to $p\gg \ln n/n$, which is essentially best possible with respect to both the constant $1/2$ and the edge probability, since one can find a.a.s.~disconnected spanning subgraphs of $G(n,p)$ with degree at most $(1/2-o(1))pn$ and since $G(n,p)$ itself is a.a.s.~disconnected for  $p\le (1-o(1))\ln n/n$.

A lot of resilience results are known for random graphs. For instance, the containment of triangle factors~\cite{BLS12}, almost spanning trees of bounded degree~\cite{BalCsaSam11}, pancyclic graphs~\cite{KLS10}, almost spanning and spanning bounded degree graphs with sublinear bandwidth~\cite{allenBT,BKT13,HLS12}, directed Hamilton cycles~\cite{ferber2014robust,HStSu16}, perfect matchings and Hamilton cycles in random graph processes~\cite{NST18}, almost spanning  powers of  cycles~\cite{SST18}  were studied. 

 An \emph{$r$-uniform hypergraph} is a tuple $(V,E)$ with $E\subseteq \binom{V}{r}$ and thus the generalisation of a graph: the elements of $V$ are called \emph{vertices} and the elements of $E$ \emph{hyperedges} (or \emph{edges} for short).  It is therefore natural to ask for degree conditions that force a subhypergraph (or \emph{subgraph} for short) of the complete hypergraph to contain a copy of some given large structure. Such problems  have been studied extensively in the last years, especially for different kinds of Hamilton cycles. Furthermore, (bounds on) the threshold for the existence of a Hamilton cycle in the random $r$-uniform hypergraph model $\Hrnp$ (every possible edge appears with probability $p$ independently of the others) have been determined for various notions of cycles.  We refer to~\cite{KO14} for an excellent survey by K\"uhn and Osthus of such problems.

The purpose of this work is to provide a Dirac-type result in the random hypergraph $\Hrnp$. This result was announced and its proof sketched in~\cite{CEP16} and is,  
to the best of our knowledge, the first local resilience result in random hypergraphs at an almost optimal edge probability. The only other resilience result in random hypergraphs is a recent work of Ferber and Hirschfeld~\cite{FH18} on the resilience of perfect matchings with respect to the codegree condition in random hypergraphs  at the asymptotically optimal probability.

Given an $r$-uniform hypergraph $H=(V,E)$. We use $\deg_H(v)$ to denote the vertex degree of a vertex $v$ in $H$, i.e.\ the number of hyperedges of $H$ that contain $v$. The minimum vertex degree of a hypergraph $H$ is then $\delta_1(H):=\min_{v} \deg_H(v)$. We will also consider other degree notions such as $\deg_H(T)$ defined as $|\{e\colon e\supseteq T, e\in E(H)\}|$, i.e.\ the number of edges that contain a given tuple $T$. Generally, we define 
the maximum ($\Delta_\ell(H)$) $\ell$-collective degree as follows: 
 \[
  \Delta_\ell(H):=\max_{T\in \tbinom{V}{\ell}}\deg_H(T).
 \]
   The notion of resilience in graphs extends verbatim to  the setting of hypergraphs.

We will be interested in resilience results of  random $r$-uniform hypergraphs with respect to weak and Berge Hamiltonicity. Weak and Berge cycles are the earliest notion of cycles in hypergraphs and are defined as follows.

\begin{definition}
A  \emph{weak cycle} is an alternating sequence $(v_1, e_1, v_2, \ldots, v_k, e_k)$ of distinct vertices $v_1, \ldots, v_k$ and hyperedges $e_1, \ldots, e_k$ such that $\{v_1,v_k\}\subseteq e_k$ and $\{v_i, v_{i+1}\} \subseteq e_i$ for every $i\in [k-1]$. A weak cycle is called \emph{Berge cycle} if all its hyperedges are distinct.
\end{definition}

If $P= (v_1, e_1, v_2, \ldots, v_n, e_n)$ is a weak cycle or a Berge cycle in a hypergraph $H$ on $n$ vertices, then $P$ is called \emph{weak Hamilton cycle} or \emph{Hamilton Berge cycle} of $H$, respectively. Other common notion of cycles are $\ell$-cycles. For an integer $1 \leq\ell\leq r$, an $r$-uniform hypergraph $C$ is an $\ell$-cycle if there exists a cyclic ordering of the vertices of $C$ such that every hyperedge of $C$ consists of $r$ consecutive vertices and such that every pair
of consecutive hyperedges intersects in precisely $\ell$ vertices. 

If $\ell =1$, then $C$ is called a \emph{loose cycle} and if $\ell = r-1$, then $C$ is called a \emph{tight cycle}. 
Observe that every $\ell$-cycle is a Berge cycle. Furthermore, every tight Hamilton cycle (i.e.~a spanning tight cycle) is a Hamilton Berge cycle. This is however not true for $\ell$-cycles if $\ell < r-1$ since a Hamilton $\ell$-cycle in a hypergraph on $n$ vertices has $n/(r-\ell)$ hyperedges, whereas a Hamilton Berge cycle has $n$ hyperedges. But since hyperedges may be repeated in weak cycles, a Hamilton $\ell$-cycle is a weak Hamilton cycle. There is also an extensive literature on Dirac-type results  that are stated in terms of the minimum degree for dense graphs and hypergraphs, we refer to the illuminating surveys~\cite{KO14,rodl2010dirac,zhao2015recent}. 

%

Surprisingly, until recently,  the only result on the minimum vertex  degree which implies the existence of a weak or a Berge Hamilton cycle was the one due to Bermond, Germa, Heydemann, and Sotteau~\cite{bermond1976}. They proved that for every integer $r\geq 3$ and $k \geq r+1$ any $r$-uniform hypergraph $H$ with minimum vertex degree $\delta_1(H) \geq \binom{k-2}{r-1}+ r -1$ contains a Berge cycle on at least $k$ vertices. 
If we ask for a  Hamilton Berge cycle in an $r$-uniform hypergraph on $n$ vertices, where $r$ is fixed and $n$ is large, then the bound $\binom{n-2}{r-1}+r-1$ is very weak since it differs from the maximum possible degree by $\binom{n-2}{r-2}-r+1$. 
Certainly, the two propositions below are folklore and should be known.
\begin{proposition}\label{thm:Diracweak}
Let $r\geq 3$ and $n \geq r$ and let $H$ be an $r$-uniform hypergraph on $n$ vertices. If 
$\delta_1(H) > \binom{\lceil n/2\rceil-1}{r-1},$ then $H$ contains a weak Hamilton cycle. 
\end{proposition}
The proof is a one-line argument by replacing every edge of $H$ with a clique on $r$ vertices and applying the original theorem by Dirac. The bound on the minimum vertex degree is sharp. Indeed, for even $n$, the disjoint union of two copies of the complete $r$-uniform hypergraph $K_{\frac{n}{2}}^{(r)}$ on $\frac{n}{2}$ vertices has minimum vertex degree $\binom{n/2-1}{r-1}$ but is disconnected. For odd $n$, the hypergraph $H$ on $n$ vertices that is the composition of two copies of $K_{\lceil \frac{n}{2} \rceil}^{(r)}$ that share one vertex satisfies $\delta_1(H) =  \binom{\lceil n/2\rceil-1}{r-1}$ but does not contain a weak Hamilton cycle.

The following result can be obtained along the lines of the proof of Dirac's theorem for graphs.
\begin{proposition}\label{thm:DiracBerge}
Let $r \geq 3$ and let $H$ be an $r$-uniform hypergraph on $n>2r-2$ vertices. If 
$\delta_1(H) \ge  \binom{\lceil n/2 \rceil -1}{r-1}+ n-1$ then $H$ contains a Hamilton Berge cycle.  
\end{proposition}
In any case, it follows from Propositions~\ref{thm:Diracweak} and~\ref{thm:DiracBerge} that the resilience of the complete hypergraph $K^{({r})}_n$ is $1-2^{1-r}-o(1)$ with respect to both weak and Berge Hamiltonicity. Recently, Coulson and Perarnau~\cite{CP18} improved the lower bound from Proposition~\ref{thm:DiracBerge} for sufficiently large $n$ to the optimal 
$\delta_1(H) \ge  \binom{\lceil n/2 \rceil -1}{r-1}$.

%

Like in the setting of graphs, a natural question is which sparse random hypergraphs contain a weak Hamilton cycle or even a Hamilton Berge cycle and how robust these hypergraphs are with respect to these properties? Recall that by $\Hrnp$ we denote the random $r$-uniform hypergraph model on the vertex set $[n]$, where each set of $r$ vertices forms an edge randomly and independently with probability $p=p(n)$. Poole~\cite{poole2014weak} determined the threshold for the existence of a weak Hamilton cycle in $\Hrnp$.

\begin{theorem}[Theorem~1.1 in~\cite{poole2014weak}]\label{thm:poole}
Let $r\geq 3$. Then
\[\Prob\big[H^{(r)}(n,p) \text{ is weak Hamiltonian}\big] \to 
\begin{cases}
0 & \text{if } p \leq (r-1)!\frac{\ln n - \omega(1)}{n^{r-1}} \\
e^{-e^{-c}} & \text{if } p= (r-1)!\frac{\ln n + c_n}{n^{r-1}} \\
1 & \text{if } p \geq (r-1)!\frac{\ln n + \omega(1)}{n^{r-1}} \ ,
\end{cases}\]
where 
 $c_n$ is any function tending to $c\in \mathbb R$  and 
 and $\omega(1)$ is an arbitrary function that tends to infinity. 
\end{theorem}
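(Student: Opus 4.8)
The plan is to reduce the statement to the Hamiltonicity of the $2$-shadow of $\Hrnp$ and then to isolate the unique obstruction. For a hypergraph $H$ on $[n]$ let $\partial H$ be the graph on $[n]$ in which $\{u,v\}$ is an edge exactly when some hyperedge of $H$ contains both $u$ and $v$. Since the hyperedges of a \emph{weak} cycle need not be distinct, $H$ has a weak Hamilton cycle if and only if $\partial H$ has a Hamilton cycle: if $v_1v_2\cdots v_nv_1$ is a Hamilton cycle of $\partial H$ then choosing, for each $i$, some $e_i\in E(H)$ with $\{v_i,v_{i+1}\}\subseteq e_i$ (indices mod $n$) produces a weak Hamilton cycle $(v_1,e_1,\ldots,v_n,e_n)$ of $H$, and the converse is immediate. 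So it suffices to study when the random graph $G:=\partial\Hrnp$ is Hamiltonian. Note $G$ is not an Erd\H{o}s--R\'enyi graph --- its edges are positively correlated --- but a fixed pair is present with probability $q=1-(1-p)^{\binom{n-2}{r-2}}$, and at the densities in question $q\sim(r-1)\tfrac{\ln n}{n}$, comfortably above the Hamiltonicity threshold of $G(n,q)$.

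The obstruction is an \emph{isolated vertex} of $\Hrnp$, i.e.\ a vertex lying in no hyperedge; such a vertex is isolated in $G$, and conversely $\delta_1(\Hrnp)\ge1$ forces $\delta_1(G)\ge r-1\ge2$. Let $X$ count the isolated vertices of $\Hrnp$, so $\Exp X=n(1-p)^{\binom{n-1}{r-1}}$; this tends to $e^{-c}$ when $p=(r-1)!\tfrac{\ln n+c_n}{n^{r-1}}$, to $\infty$ when $p\le(r-1)!\tfrac{\ln n-\omega(1)}{n^{r-1}}$, and to $0$ when $p\ge(r-1)!\tfrac{\ln n+\omega(1)}{n^{r-1}}$. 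A routine factorial-moment computation --- the only dependence between ``$u$ isolated'' and ``$v$ isolated'' comes from the $\binom{n-2}{r-2}$ hyperedges meeting both, a $1+o(1)$ effect --- shows $X\xrightarrow{d}\mathrm{Poisson}(e^{-c})$ in the middle regime, hence $\Prob[X=0]\to e^{-e^{-c}}$; and the same moment estimates give $X\ge1$ a.a.s.\ in the first regime and $X=0$ a.a.s.\ in the third. Consequently $\Prob[\Hrnp\text{ is weak Hamiltonian}]=\Prob[X=0]+o(1)$ in all three cases, once we establish that $\Prob[\delta_1(\Hrnp)\ge1\text{ and }G\text{ not Hamiltonian}]=o(1)$.

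For this last point I would run Pósa's rotation--extension method. It suffices to show that a.a.s., as soon as $\delta_1(\Hrnp)\ge1$, the graph $G$ is connected, satisfies $|N_G(S)|\ge2|S|$ for every $S$ with $1\le|S|\le n/5$, and has an edge between any two disjoint $(n/5)$-sets; these three conditions force $G$ to be Hamiltonian. Connectivity and the edge between large sets are straightforward first-moment arguments: a shadow-component of size $2\le s\le n/2$, or a pair of disjoint linear-sized sets with no edge between them, forces the (very unlikely) absence of $\Omega(n^{r-1})$ suitably located hyperedges, which survives the union bound over at most $4^n$ configurations --- the hypothesis $\delta_1(\Hrnp)\ge1$ is used only to rule out isolated vertices, i.e.\ components of size $1$. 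Expansion of sets of size between $n/\ln^2 n$ and $n/5$ follows from the same estimate (their neighbourhood misses only a sublinear set). The real content is expansion of \emph{small} sets.

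The main obstacle is exactly this small-set expansion in the threshold window, and the difficulty is genuine for small $r$: at $p=(r-1)!\tfrac{\ln n+c}{n^{r-1}}$ there are a.a.s.\ $\Theta(\ln n)$ vertices lying in precisely one hyperedge, so for $r=3$ the graph $G$ really does contain vertices of degree exactly $2$ and no minimum-degree shortcut is available. The way around it is structural: if $S$ has $|S|=s$ and $|N_G(S)|<2s$, then $T:=S\cup N_G(S)$ has $|T|<3s$ and, since every vertex of $S$ lies in a hyperedge contained entirely in $T$, the set $T$ contains at least $s/r$ hyperedges --- but for $t:=|T|\le n/\ln^2 n$ the expected number of hyperedges inside a fixed $t$-set is $\binom{t}{r}p=\Theta(t^r\ln n/n^{r-1})=t/\polylog(n)\ll s/r$, so a union bound over $T$ (exploiting precisely this large deviation in the number of internal hyperedges) shows a.a.s.\ no such $T$ of size up to $n/\ln^2 n$ exists. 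The smallest sets ($|S|$ bounded) must be treated by a separate, more delicate estimate ruling out the few local configurations that could fail expansion --- e.g.\ two vertices each lying in a single common hyperedge of size $r<6$ --- each of which occurs with probability $o(1)$ after summing over its location. Combining this with the Poisson convergence of $X$ yields all three cases of the theorem.
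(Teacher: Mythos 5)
This theorem is quoted from Poole~\cite{poole2014weak} and is not proved in the paper, so there is no internal proof to compare against; what follows is an assessment of your sketch on its own terms.

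Your overall strategy is sound and, to the best of my knowledge, mirrors Poole's actual argument: the equivalence between weak Hamiltonicity of $H$ and Hamiltonicity of the $2$-shadow $\partial H$ is exact precisely because repeated hyperedges are allowed, the hitting-time obstruction is an isolated vertex of $\Hrnp$, and the factorial-moment calculation giving $X\xrightarrow{d}\mathrm{Poisson}(e^{-c})$ is routine (the correlation between ``$u$ isolated'' and ``$v$ isolated'' is $(1-p)^{-\binom{n-2}{r-2}}=1+o(1)$ at the threshold). Reducing the third regime to the second by monotonicity, or noting that the arguments only get easier for larger $p$, is also fine.

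The genuinely incomplete step is the one you flag yourself, and it is worth being precise about why your ``first-moment over $T$'' argument cannot be made to cover bounded-size sets. Fix $s$ constant and $t<3s$; you want to union-bound over $\binom{n}{t}$ choices of $T$ the event that $T$ contains at least $\lceil s/r\rceil\ge 1$ hyperedge. But for $t\ge r$ the expected number of $t$-sets containing a hyperedge is $\binom{n}{t}\binom{t}{r}p=\Theta(n^{t-r+1}\ln n)$, which diverges as soon as $t\ge r$, i.e.\ already for $s=2$ when $r\le 5$. So the step ``a union bound over $T$ shows a.a.s.\ no such $T$ exists'' is simply false for bounded $s$ and small $r$; the first-moment count of $T$-sets with an internal hyperedge is huge. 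What saves the argument is not the scarcity of such $T$-sets but a finer classification: $|N_G(S)|<2|S|$ for $|S|=s=O(1)$ forces not merely one internal hyperedge but a specific local pattern (several vertices whose entire hyperedge-neighbourhoods collapse into a set of size $<3s$), and \emph{those} configurations each carry an extra factor of $(1-p)^{\Omega(n^{r-1}/n)}\approx n^{-\Omega(1)}$ per constrained vertex from the requirement that each vertex of $S$ lie in no hyperedge leaving $T$. Your single example (``two vertices sharing their unique hyperedge when $r<6$'') is one such pattern, but the claim needs a complete enumeration over all $s$ up to the threshold where your counting argument takes over, and a verification that each pattern has expectation $o(1)$ uniformly in $p$ throughout the window. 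This is the technically delicate core of Poole's theorem, and in a full write-up it cannot be left as ``a separate, more delicate estimate.'' Everything else in your outline is standard.
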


Since every Hamilton Berge cycle is in particular a weak Hamilton cycle, Theorem~\ref{thm:poole} yields that $H^{(r)}(n,p)$ a.a.s.~does not contain a Hamilton Berge cycle if $p \leq (r-1)!\frac{\ln n - \omega(1)}{n^{r-1}}$. Suprisingly, before our extended abstract~\cite{CEP16} an upper bound on the threshold of $H^{(r)}(n,p)$ being Berge Hamiltonian hasn't been studied directly, but recently, Bal and Devlin~\cite{BD18} determined the threshold for Hamilton Berge cycles up to the constant factor.


\subsection*{Main result}
In this paper we prove the following Dirac-type result for the existence of Hamilton Berge cycles in random hypergraphs.

\begin{theorem}\label{berge:thm:main}
For every integer $r\geq 3$ and every real $\gamma >0$ the following holds asymptotically almost surely~for $\mathcal H = H^{(r)}(n,p)$ if $p \geq \frac{\log^{\kk} n}{n^{r-1}}$. Let $H \subseteq \mathcal H$ be a spanning subgraph with $\delta_1(H) \geq \left(\frac{1}{2^{r-1}} + \gamma\right) p \binom{n}{r-1}$. Then $H$ contains a Hamilton Berge cycle. 
\end{theorem}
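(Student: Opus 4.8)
The plan is to establish a Hamilton Berge cycle by first extracting a long, structured ``backbone'' from the host graph $H$ and then using the random edges of $\cH$ to supply the distinct hyperedges needed to close a Berge cycle spanning all $n$ vertices. The key observation is that a Hamilton Berge cycle on $V=[n]$ is nothing more than a cyclic ordering $v_1,\dots,v_n$ of the vertices together with an \emph{injective} assignment of hyperedges $e_i\in E(H)$ with $\{v_i,v_{i+1}\}\subseteq e_i$ (indices mod $n$). So the task naturally splits into two subtasks: (i) find a cyclic ordering of $V$ such that for every consecutive pair $\{v_i,v_{i+1}\}$ there are ``many'' candidate edges of $H$ containing that pair, and (ii) from these candidate sets, choose $n$ pairwise distinct edges, one per pair --- a system of distinct representatives (SDR) problem. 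For (ii) I would build an auxiliary bipartite graph between the $n$ consecutive pairs and the edges of $H$, and invoke Hall's theorem (or a defect version / a greedy argument exploiting that each pair has abundantly many candidates relative to how often a single edge can be reused, namely at most $\binom{r}{2}$ times). The real work is therefore (i): producing a cyclic ordering whose consecutive pairs are all ``rich'' in $H$.

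For step (i) I would follow the now-standard absorption-plus-connecting paradigm adapted to the Berge setting. First, using the minimum-degree hypothesis $\delta_1(H)\ge(2^{1-r}+\gamma)p\binom{n}{r-1}$ together with typical pseudorandom properties of $\cH=H^{(r)}(n,p)$ (which hold a.a.s.\ for $p\ge\log^{\kk}n/n^{r-1}$ and pass down to $H$), one shows that for almost every vertex $v$, and in fact for every vertex, the ``$H$-link'' --- the graph on $V$ whose edges are those pairs $\{u,w\}$ that lie together in some edge of $H$ through an auxiliary third structure --- has large minimum degree, on the order of $\gamma pn^{r-1}$ worth of witnessing edges. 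More precisely I would define an auxiliary graph $G$ on $[n]$ where $\{u,w\}\in E(G)$ iff the number of edges $e\in E(H)$ with $\{u,w\}\subseteq e$ is at least, say, $\tfrac{\gamma}{2}p\binom{n-2}{r-2}$; a double-counting/concentration argument using $\delta_1(H)$ large should force $G$ to have minimum degree $\ge(\tfrac12+\gamma')n$, so that $G$ is Dirac and hence Hamiltonian. A Hamilton cycle in $G$ gives the desired cyclic ordering in which every consecutive pair is rich. Then step (ii) goes through because each of the $n$ rich pairs has $\ge\tfrac{\gamma}{2}p\binom{n-2}{r-2}\gg n$ candidate edges, while each edge of $H$ serves at most $\binom{r}{2}=O(1)$ pairs; a simple greedy / random-choice argument or a direct Hall check then yields the injective selection.

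The place where I expect genuine difficulty --- and the reason the paper needs $p\ge\log^{\kk}n/n^{r-1}$ rather than merely $p\gg\log n/n^{r-1}$ --- is controlling the edges of $H$ incident to the small set of ``atypical'' vertices and making the SDR argument robust against local congestion. Concretely: a vertex $v$ of $\cH$ whose degree in $\cH$ is only barely $p\binom{n}{r-1}$, combined with an adversarial choice of $H\subseteq\cH$, might make the link of $v$ just barely below the Dirac threshold, or might force almost all edges through $v$ to be needed simultaneously by the two pairs $\{v_{i-1},v\}$ and $\{v,v_{i+1}\}$ incident to $v$ in the cycle. Handling this requires quantitative concentration: one needs every vertex-degree and every codegree in $\cH$ to be $(1\pm o(1))$ times its expectation, and one needs the bipartite ``pair--edge'' incidence structure to be an expander of sorts, so that Hall's condition cannot be violated by any set of consecutive pairs. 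Since a constant fraction of the minimum degree is ``lost'' to the adversary, the surviving surplus is $\gamma p\binom{n}{r-1}$, and for the concentration inequalities (Chernoff for degrees, Janson/Kim--Vu-type bounds for codegrees and for the number of edges containing a fixed pair through a third vertex) to beat the union bound over all $\binom{n}{2}$ pairs and all vertices, one pays a polylogarithmic factor in $p$ --- the exponent $\kk$ absorbing the various applications. I would therefore budget the bulk of the technical effort for: (a) a ``typicality lemma'' listing the a.a.s.\ properties of $\cH$ and verifying them via Chernoff/Janson, (b) the derivation that these properties plus $\delta_1(H)$ large make the auxiliary graph $G$ Dirac, and (c) the robust SDR lemma closing the Berge cycle, with the Hamilton cycle in $G$ and Hall's theorem as the comparatively easy glue.
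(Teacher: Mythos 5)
Your step (i) contains a fatal error, and it is precisely the pitfall the paper singles out at the end of its introduction. You claim that the auxiliary ``shadow'' graph $G$ on $[n]$ (with $\{u,w\}\in E(G)$ when $\{u,w\}$ lies in suitably many hyperedges of $H$) inherits minimum degree $\ge(\tfrac12+\gamma')n$ and is therefore Dirac-Hamiltonian. But in the regime $p=\Theta(\ln^{\kk}n/n^{r-1})$, the expected codegree of any pair in $\cH$ is $p\binom{n-2}{r-2}=\Theta(\ln^{\kk}n/n)=o(1)$, so your threshold $\tfrac\gamma2 p\binom{n-2}{r-2}$ is below $1$ and $G$ is literally the shadow graph of $H$. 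Since $|E(H)|\le|E(\cH)|=O(p\binom{n}{r})=O(n\,\polylog n)$ and each hyperedge contributes at most $\binom r2$ shadow edges, the \emph{average} degree of $G$ is $\polylog(n)$, nowhere near $n/2$. Dirac's theorem is simply not applicable. One cannot rescue this by appealing to resilience of sparse random graphs either: $G$ behaves like $G(n,p')$ with $p'=\Theta(\polylog n/n)$, for which the Lee--Sudakov resilience is $1/2-o(1)$, yet keeping a $(2^{1-r}+\gamma)$-fraction of hyperedges through each vertex translates (because codegrees in $\cH$ are almost always $0$ or $1$, and are $O(\ln n)$ by Proposition~\ref{prop:codegree}) to keeping roughly a $(2^{1-r}+\gamma)$-fraction of shadow edges --- and $2^{1-r}+\gamma<1/2$ for every $r\ge3$. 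The shadow reduction therefore cannot reach the claimed resilience $1-2^{1-r}-o(1)$; this is exactly the obstruction discussed after Theorem~\ref{berge:thm:main}.

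Your step (ii) is also misjudged quantitatively: you say each consecutive pair has ``$\ge\tfrac\gamma2 p\binom{n-2}{r-2}\gg n$ candidate edges,'' but by Proposition~\ref{prop:codegree} no pair has more than $2\ln n$ candidate hyperedges, so the SDR problem is not at all slack in the way you envision. The paper circumvents both difficulties by a quite different route: it does not reduce to a graph Hamilton cycle at all, but builds the Berge cycle directly by the absorbing method --- a reservoir $Z$ with vertex absorbers (Definition~\ref{def:absorber}, Proposition~\ref{prop:absorber}), a connecting lemma (Lemma~\ref{lem:targeted}) that joins many endpoint pairs by edge-disjoint Berge paths of length $\Theta(\log n)$ using matchings (Lemmas~\ref{lem:matching},~\ref{lem:matching2}) and an expansion argument (Lemma~\ref{lem:simply_expand}), a near-perfect cover of the bulk by Berge paths via $(W_i,W_{i+1})$-matchings, and a final absorption step. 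Distinctness of the hyperedges (the Berge property) is then guaranteed structurally --- every edge used lies between two prescribed disjoint blocks with intersection pattern $(1,r-1)$ or $(r-1,1)$ --- rather than via a posteriori SDR selection.
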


It is worth noting that it is not possible to merely reduce the problem of finding Berge cycles to a problem of finding a Hamilton cycle in the random graph to achieve the same resilience $1-2^{1-r}+o(1)$ in the random hypergraph $\Hrnp$. One possibility of such a `reduction' would be to declare an edge $uv\in \binom{[n]}{2}$ to lie in $G$ if $\{u,v\}$ lies in some hyperedge of $\Hrnp$. Then such $G$ behaves much like the random graph $G(n,p')$, where $1-p'=(1-p)^{\binom{n-2}{r-2}}$, i.e.\ $p'=\Omega(\polylog(n)/n)$. However, such a reduction leads to a resilience which is far  from being the asymptotically optimal one, as asserted by our Theorem~\ref{berge:thm:main}.

The minimum degree condition is asymptotically tight, meaning that we cannot replace it with 
$\delta_1(H) \geq \left(\frac{1}{2^{r-1}} - \gamma\right) p \binom{n}{r-1}$. Indeed, given
$\mathcal H = H^{(r)}(n,p)$ together with a partition $V(\mathcal H)=V_1\cup V_2$ with $|V_1-V_2|\leq 1$,
chosen uniformly at random among all such partitions, it happens a.a.s.\ that, for $i=1,2$,
the degree of every $v\in V_i$ into $V_i$ is at least 
$\left(\frac{1}{2^{r-1}} - \gamma\right) p \binom{n}{r-1}$. Fixing a partition with that property,
we find the hypergraph $H:=\mathcal H [V_1]\cup \mathcal H [V_2]$ which does not contain
a Berge Hamilton cycle and which satisfies
$\delta_1(H) \geq \left(\frac{1}{2^{r-1}} - \gamma\right) p \binom{n}{r-1}$.

Furthermore, the bound on $p$ is optimal up to possibly this polylogarithmic factor, and  it provides an alternative proof of the result in~\cite{poole2014weak} with only slightly weaker edge probability. 



The proof  is based on the absorbing method developed by R{\"o}dl, Ruci{\'n}ski, and Szemer{\'e}di~\cite{RRSz06}. Of particular importance are the ideas from the proof of a Dirac-type result for random directed graphs due to Ferber, Nenadov, Noever, Peter and {\v{S}}koric~\cite{ferber2014robust}, which allow us to apply this method in such a very sparse scenario. 

\subsection*{Organization of the paper}
The rest of the paper is organised as follows. In Section~\ref{sec:probability} we state concentration inequalities that we are going to use. In Section~\ref{sec:further_tools} we introduce some technical definitions, the notion of $(\eps,p)$-pseudorandom hypergraphs and prove some basic properties around these notions. In Section~\ref{sec:absorbers} we introduce the central concept of absorbers and in Section~\ref{sec:connection} we prove a connection lemma which will allow us to connect many pairs of vertices by disjoint paths. We provide in Section~\ref{sec:main_theorem} first an outline of the proof of Theorem~\ref{berge:thm:main} and then we prove a technical theorem about Hamilton Berge cycles in pseudorandom hypergraphs which implies Theorem~\ref{berge:thm:main}. We present the proof of Proposition~\ref{thm:DiracBerge}  in Section~\ref{sec:Dirac}. 

\section{Probabilistic tools: concentration inequalities}\label{sec:probability}
In our proofs we use the following standard bounds on deviations of random variables. Their proofs can be found in e.g.~\cite{janson2011random}. 
%
%
%
The following two bounds belong to Chernoff's inequality, which collects different exponentially decreasing bounds on the tails of 
a binomial distribution. 

\begin{theorem}[Chernoff's inequality I]\label{thm:chernoff}
For every random variable $X \sim \Bin(n,p)$ and every $\eps \leq 3/2$ we have 
\[\Prob\big[|X-\Exp[X]| > \eps \Exp[X]\big] < 2 \exp\left(-\frac{\eps^2 \Exp[X]}{3}\right).\]
\end{theorem}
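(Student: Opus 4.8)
The plan is to use the standard exponential-moment (Chernoff--Bernstein) method. Write $X=\sum_{i=1}^n X_i$ as a sum of independent Bernoulli$(p)$ indicators and put $\mu:=\Exp[X]=np$; the degenerate cases $p\in\{0,1\}$ and $\eps=0$ are trivial, so assume $0<p<1$ and $0<\eps\le 3/2$. For the upper tail, for any $t>0$ Markov's inequality applied to $e^{tX}$ gives
\[
  \Prob\big[X\ge(1+\eps)\mu\big]\le e^{-t(1+\eps)\mu}\,\Exp[e^{tX}],
\]
and by independence together with $1+x\le e^x$ one has $\Exp[e^{tX}]=(1-p+pe^t)^n\le\exp\!\big(\mu(e^t-1)\big)$. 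Taking $t=\ln(1+\eps)$ (which essentially optimises the bound) yields
\[
  \Prob\big[X\ge(1+\eps)\mu\big]\le\exp\!\Big(-\mu\big((1+\eps)\ln(1+\eps)-\eps\big)\Big).
\]

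Next I would treat the lower tail in the same way: for $t>0$, Markov applied to $e^{-tX}$ gives $\Prob[X\le(1-\eps)\mu]\le e^{t(1-\eps)\mu}\Exp[e^{-tX}]\le\exp\!\big(t(1-\eps)\mu+\mu(e^{-t}-1)\big)$, and the choice $t=-\ln(1-\eps)$ (legitimate for $\eps<1$; if $\eps\ge1$ the event $\{X\le(1-\eps)\mu\}$ is contained in $\{X\le 0\}$ and has probability $0$) gives
\[
  \Prob\big[X\le(1-\eps)\mu\big]\le\exp\!\Big(-\mu\big((1-\eps)\ln(1-\eps)+\eps\big)\Big).
\]
Since $\{|X-\mu|>\eps\mu\}\subseteq\{X\ge(1+\eps)\mu\}\cup\{X\le(1-\eps)\mu\}$, a union bound then reduces everything to showing that both exponents are at least $\eps^2\mu/3$.

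The only genuinely analytic point, and the step I expect to be the main obstacle, is the pair of elementary inequalities
\[
  (1+\eps)\ln(1+\eps)-\eps\ \ge\ \tfrac{\eps^2}{3}\quad\text{for }0\le\eps\le\tfrac32,
  \qquad
  (1-\eps)\ln(1-\eps)+\eps\ \ge\ \tfrac{\eps^2}{2}\quad\text{for }0\le\eps<1.
\]
For the first I would set $h(\eps):=(1+\eps)\ln(1+\eps)-\eps-\eps^2/3$ and note $h(0)=h'(0)=0$ with $h''(\eps)=\tfrac1{1+\eps}-\tfrac23$; hence $h$ is convex (so nonnegative) on $[0,\tfrac12]$, while on $[\tfrac12,\tfrac32]$ the derivative $h'$ is strictly decreasing, so $h$ is unimodal there, and since $h$ is positive at $\eps=\tfrac12$ and still positive at the right endpoint ($h(\tfrac32)=\tfrac52\ln\tfrac52-\tfrac94>0$, a one-line numerical check), it stays nonnegative throughout $[0,\tfrac32]$. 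The second inequality follows even more directly from the Taylor expansion $(1-\eps)\ln(1-\eps)+\eps=\sum_{k\ge2}\eps^k/(k(k-1))$, whose leading term is $\eps^2/2$. Plugging these into the two tail bounds (and using $\eps^2/2\ge\eps^2/3$) bounds each of $\Prob[X\ge(1+\eps)\mu]$ and $\Prob[X\le(1-\eps)\mu]$ by $\exp(-\eps^2\mu/3)$, and the union bound supplies the factor $2$. Note that the hypothesis $\eps\le3/2$ enters precisely to keep the first of the two inequalities valid.
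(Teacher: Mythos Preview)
Your argument is the standard exponential-moment proof and is correct. The paper does not give its own proof of this statement: it simply remarks that proofs of the stated concentration inequalities ``can be found in e.g.~\cite{janson2011random}'' and moves on. So there is no approach in the paper to compare against; you have supplied a complete proof where the authors merely cite a reference. One tiny slip: for $\eps=1$ the event $\{X\le(1-\eps)\mu\}=\{X=0\}$ need not have probability zero, but this is harmless since $(1-p)^n\le e^{-\mu}\le e^{-\eps^2\mu/3}$, or alternatively since the original event uses a strict inequality and $\{X<0\}$ really is empty.
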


The second Chernoff's inequality that we need provides only a bound on the upper tail of the binomial distribution. 

\begin{theorem}[Chernoff's inequality II]\label{chernoff1}
For every random variable $X\sim \Bin(n,p)$ and every $t\geq 0$ we have
$$\Prob\big[X\geq  \Exp[X]+t\big]\leq \exp\left(-\frac{t^2}{2(\Exp[X] + t/3)}\right). $$
\end{theorem}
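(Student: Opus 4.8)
The plan is to prove the tail bound by the classical exponential-moment (Chernoff) method, combined with one well-chosen elementary inequality. Write $X=\sum_{i=1}^n X_i$ as a sum of independent Bernoulli$(p)$ variables and put $\mu:=\Exp[X]=np$. The cases $\mu=0$ (then $X\equiv 0$) and $t=0$ (then the right-hand side is $1$) are trivial, so assume $\mu>0$ and $t>0$. For any $\lambda>0$, Markov's inequality applied to $e^{\lambda X}$ gives $\Prob[X\ge \mu+t]\le e^{-\lambda(\mu+t)}\Exp[e^{\lambda X}]$, and by independence $\Exp[e^{\lambda X}]=\big(1+p(e^\lambda-1)\big)^n\le \exp\!\big(\mu(e^\lambda-1)\big)$, using $1+x\le e^x$. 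Hence
\[
\Prob[X\ge \mu+t]\le \exp\!\big(\mu(e^\lambda-1-\lambda)-\lambda t\big)\qquad\text{for every }\lambda>0.
\]

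The next step is to replace $e^\lambda-1-\lambda$ by a rational function whose subsequent optimisation produces the Bernstein-type denominator $\mu+t/3$. I would establish the inequality $e^\lambda-1-\lambda\le \frac{\lambda^2/2}{1-\lambda/3}$ for all $0\le\lambda<3$. This follows from $e^\lambda-1-\lambda=\sum_{k\ge 2}\lambda^k/k!$ together with the bound $k!\ge 2\cdot 3^{k-2}$ for $k\ge 2$ (a one-line induction), which turns the series into a geometric one with sum exactly $\frac{\lambda^2}{2}\cdot\frac{1}{1-\lambda/3}$. Plugging this into the displayed estimate gives $\Prob[X\ge\mu+t]\le \exp\!\big(\frac{\mu\lambda^2/2}{1-\lambda/3}-\lambda t\big)$ for $0<\lambda<3$.

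Finally I would substitute the explicit choice $\lambda:=\frac{t}{\mu+t/3}$, which lies in $(0,3)$ precisely because $\mu>0$. A short computation shows $1-\lambda/3=\frac{\mu}{\mu+t/3}$, whence $\frac{\mu\lambda^2/2}{1-\lambda/3}=\frac{\lambda^2(\mu+t/3)}{2}=\frac{t^2}{2(\mu+t/3)}$ and $\lambda t=\frac{t^2}{\mu+t/3}$; substituting, the exponent becomes $\frac{t^2}{2(\mu+t/3)}-\frac{t^2}{\mu+t/3}=-\frac{t^2}{2(\mu+t/3)}$, which is exactly the claimed bound.

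I do not anticipate a genuine obstacle: the only non-routine point is to guess the right auxiliary inequality and the value of $\lambda$ so that the optimisation lands on the stated denominator rather than on the sharper but less convenient rate function $(1+x)\ln(1+x)-x$. Once $\lambda=\frac{t}{\mu+t/3}$ is chosen, everything reduces to bookkeeping. An alternative route would be to first derive $\Prob[X\ge\mu+t]\le\exp\!\big(-\mu\,h(t/\mu)\big)$ with $h(x)=(1+x)\ln(1+x)-x$ and then prove $h(x)\ge\frac{x^2}{2(1+x/3)}$ for $x\ge 0$ by comparing derivatives, but the direct argument above avoids that calculus lemma and is shorter.
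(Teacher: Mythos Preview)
Your argument is correct: the exponential-moment bound, the inequality $e^\lambda-1-\lambda\le \frac{\lambda^2/2}{1-\lambda/3}$ via $k!\ge 2\cdot 3^{k-2}$, and the substitution $\lambda=t/(\mu+t/3)$ all go through exactly as you describe and yield the stated bound.

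As for comparison with the paper: there is nothing to compare, because the paper does not prove this theorem at all. It is listed among the standard concentration inequalities in Section~\ref{sec:probability} with the remark that proofs ``can be found in e.g.~\cite{janson2011random}''. Your write-up is the usual Bernstein-flavoured derivation one finds in such references, so it is entirely in line with what the paper defers to.
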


Finally we consider binomial random subsets. For $\Gamma=[n]$ let $\Gamma_{p_1, \ldots, p_n}$ be defined by including for every $i\in [n]$ the $i$-th element of $\Gamma$ with probability $p_i$ independently of all other elements of $\Gamma$. For each set $\cS \subseteq 2^{\Gamma}$ of subsets of $\Gamma$ and each set $A\in \cS$, we let $X_A$ denote the indicator variable for the event $A \subseteq \Gamma_{p_1, \ldots, p_n}$. Janson's inequality gives an exponentially small bound on the lower tail of the distribution of sums of such indicator variables. 

\begin{theorem}[Janson's inequality]\label{thm:janson}
Let $\Gamma$ be a finite set and let $\cS \subseteq 2^{\Gamma}$ be a set of subsets of $\Gamma$. If $X=\sum_{A\in {\cS}} X_A$, where $X_A$ is an indicator variable, and $0\leq t\leq \Exp[X]$,
then
\[
\Prob\big[X\leq \Exp[X]-t\big]\leq \exp\left(-\frac{t^2}{2\overline{\Delta}}\right),
\]
where 
\[
\overline{\Delta}=\Exp[X] + \sum\limits_{A\in{\cS}\atop} \sum\limits_{B\in{\cS}: \atop A\cap B\neq \varnothing, A\neq B}   \Exp[X_AX_B]. 
\]
\end{theorem}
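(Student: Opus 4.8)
The plan is to use the exponential-moment (Laplace transform) method together with the Harris--FKG correlation inequality. Write $\mu=\Exp[X]$ and, for distinct $A,B\in\cS$, say $A\sim B$ when $A\cap B\neq\varnothing$; with this notation $\overline\Delta=\sum_{A\in\cS}\Exp[X_A]+\sum_{A\sim B}\Exp[X_AX_B]$, the second sum over ordered pairs, so in particular $\overline\Delta\geq\mu$. First I would apply Markov's inequality to $e^{-\lambda X}\geq 0$: for every $\lambda\geq 0$,
\[
\Prob\big[X\leq\mu-t\big]=\Prob\big[e^{-\lambda X}\geq e^{-\lambda(\mu-t)}\big]\leq e^{\lambda(\mu-t)}\,\Exp\big[e^{-\lambda X}\big].
\]
Everything then reduces to an upper bound on $\Exp[e^{-\lambda X}]=\Exp\big[\prod_{A\in\cS}e^{-\lambda X_A}\big]$.

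The heart of the matter is the claim that, for $0\leq\lambda\leq 1$,
\[
\Exp\big[e^{-\lambda X}\big]\;\leq\;\exp\Big(-\lambda\mu+\tfrac12\lambda^2\overline\Delta\Big).
\]
When no two members of $\cS$ intersect the $X_A$ are independent and $\Exp[e^{-\lambda X}]=\prod_{A}\big(1-(1-e^{-\lambda})\Exp[X_A]\big)\leq\exp\big(-(1-e^{-\lambda})\mu\big)\leq\exp\big(-\lambda\mu+\tfrac12\lambda^2\mu\big)$, using $1-x\leq e^{-x}$ and $1-e^{-\lambda}\geq\lambda-\tfrac12\lambda^2$; since $\overline\Delta\geq\mu$ this is the claim in that case. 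For the general case I would fix an ordering $\cS=\{A_1,\dots,A_m\}$, write $\Exp[e^{-\lambda X}]$ as a telescoping product of conditional expectations of the form $\Exp\big[e^{-\lambda X_{A_i}}\mid X_{A_1},\dots,X_{A_{i-1}}\big]$, and for each $i$ split the earlier indices into $\{j<i:A_j\sim A_i\}$ and $\{j<i:A_j\cap A_i=\varnothing\}$. As $e^{-\lambda X_{A_i}}$ is a decreasing function of the random subset, the Harris--FKG inequality shows that conditioning on the indicators attached to sets disjoint from $A_i$ does not increase its conditional expectation, while the indicators attached to sets meeting $A_i$ can inflate it only by an amount bounded in terms of $\sum_{j<i,\,A_j\sim A_i}\Exp[X_{A_i}X_{A_j}]$; multiplying the per-factor bounds so obtained and applying $1+x\leq e^x$ assembles exactly the two sums that make up $\overline\Delta$, which yields the displayed inequality.

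Finally I would optimize over $\lambda$. Combining the two displays gives $\Prob[X\leq\mu-t]\leq\exp\big(-\lambda t+\tfrac12\lambda^2\overline\Delta\big)$ for all $0\leq\lambda\leq 1$, and since $0\leq t\leq\mu\leq\overline\Delta$ the choice $\lambda=t/\overline\Delta$ is admissible and produces $\Prob[X\leq\mu-t]\leq\exp\big(-t^2/(2\overline\Delta)\big)$, which is the assertion. The one genuine difficulty is the moment-generating-function bound: the indicators $X_A$ are positively correlated, so the naive product estimate is not available, and Harris--FKG points the wrong way for an upper bound on the expectation of a product of decreasing functions. The resolution is the structural observation that genuine dependence occurs only between intersecting sets, so FKG need only be invoked on the ``disjoint part'' of the conditioning while the ``intersecting part'' is paid for out of the slack $\overline\Delta-\mu$. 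Everything else --- Markov's inequality, the elementary estimates $1-x\leq e^{-x}$ and $1-e^{-\lambda}\geq\lambda-\tfrac12\lambda^2$, and a one-variable minimization --- is routine.
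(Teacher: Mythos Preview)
The paper does not prove this theorem at all: Janson's inequality is stated in Section~\ref{sec:probability} as a standard concentration tool and referenced to~\cite{janson2011random}, with the explicit remark that ``their proofs can be found in e.g.~\cite{janson2011random}.'' So there is no proof in the paper to compare against.

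That said, your sketch is essentially the standard Boppana--Spencer/Janson argument and is correct in outline. The key inequality $\Exp[e^{-\lambda X}]\le\exp(-\lambda\mu+\tfrac12\lambda^2\overline\Delta)$ is indeed established by writing the expectation as a telescoping product of conditional expectations, applying FKG to handle the conditioning on indicators $X_{A_j}$ with $A_j\cap A_i=\varnothing$ (which are independent of $X_{A_i}$ anyway, so the bound $\Exp[e^{-\lambda X_{A_i}}\mid\cdot]\le 1-(1-e^{-\lambda})\Prob[X_{A_i}=1\mid\cdot]$ survives), and then bounding the effect of the intersecting indices by $\sum_{j<i,\,A_j\sim A_i}\Exp[X_{A_i}X_{A_j}]$. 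Your final optimization $\lambda=t/\overline\Delta\in[0,1]$ is legitimate precisely because $t\le\mu\le\overline\Delta$. The one place where your write-up is a bit hand-wavy is the sentence ``the indicators attached to sets meeting $A_i$ can inflate it only by an amount bounded in terms of \ldots''; in a full proof one makes this precise via the inequality $\Prob[X_{A_i}=1\mid X_{A_1},\dots,X_{A_{i-1}}]\ge\Prob[X_{A_i}=1]-\sum_{j<i,\,A_j\sim A_i}\Prob[X_{A_i}=X_{A_j}=1]$, but this is exactly what the standard references do.
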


\section{More definitions and auxiliary lemmas}\label{sec:further_tools}
Given an $r$-uniform hypergraph $H$, a subset $U\subseteq V(H)$ and a vertex $u\in V(H)$, we write $\deg_H(u,U)$ to denote the degree of $u$ in $U$, which is the number of edges from $H$
which contain $u$ and which are contained completely in $U\cup\{u\}$. For disjoint $T$ and $U$, we write $e_H(T, \binom{U}{r-1})$  to denote the number of edges $e$ from $H$ with $|e\cap T|=1$ and $|e\cap U|=r-1$. The hypergraph $H$ is sometimes omitted when it is clear from the context.

Our proof will use some central properties of the random hypergraph $\Hrnp$ which we call pseudorandom. So, our main theorem will state that $r$-uniform hypergraphs that are pseudorandom satisfy a Dirac-type theorem about  Hamilton  Berge cycles.

\subsection{Pseudorandom hypergraphs}
The following proposition asserts that for an edge probability 
$p\leq \polylog(n)/n^{r-1}$,
the codegree of a random hypergraph doesn't get too large.
\begin{proposition}\label{prop:codegree}
For every integer $r\ge 3$, every real $c>0$ and $p\le \frac{\ln^c n}{n^{r-1}}$,  with probability at least $1-1/n$, the hypergraph $\Hrnp$ has maximum $2$-collective degree at most $2\ln n$.
\end{proposition}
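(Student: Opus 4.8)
The plan is to run a straightforward first-moment argument: bound the probability that a \emph{single} pair of vertices has codegree exceeding $2\ln n$, and then take a union bound over all $\binom{n}{2}$ pairs. Fix $T=\{u,v\}\in\binom{[n]}{2}$. An edge of the complete $r$-uniform hypergraph on $[n]$ contains $T$ exactly when it has the form $T\cup S$ with $S\in\binom{[n]\setminus T}{r-2}$, and distinct edges of this form appear in $\Hrnp$ independently, so $\deg_{\Hrnp}(T)\sim\Bin\big(\binom{n-2}{r-2},p\big)$. Since $\binom{n-2}{r-2}\le n^{r-2}$ and $p\le \ln^c n/n^{r-1}$, its mean satisfies
\[
\mu:=\binom{n-2}{r-2}\,p\;\le\;\frac{n^{r-2}\ln^{c} n}{n^{r-1}}\;=\;\frac{\ln^{c} n}{n}\;=\;o(1).
\]

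Next I would estimate $\Prob[\deg_{\Hrnp}(T)\ge k]$ for $k:=\lceil 2\ln n\rceil$ directly from the binomial distribution rather than via the additive Chernoff bound of Theorem~\ref{chernoff1}. Writing $N:=\binom{n-2}{r-2}$ (so that $k\le N$ for large $n$), a union bound over the $k$-subsets of the $N$ candidate edges together with $k!\ge (k/e)^k$ gives
\[
\Prob\big[\deg_{\Hrnp}(T)\ge k\big]\;\le\;\binom{N}{k}p^{k}\;\le\;\frac{(Np)^{k}}{k!}\;\le\;\Big(\frac{e\mu}{k}\Big)^{k}\;\le\;\Big(\frac{e\ln^{c-1} n}{2n}\Big)^{2\ln n}\;\le\;n^{-\ln n},
\]
where the last inequality holds for all sufficiently large $n$ because $e\ln^{c-1} n/2\le n^{1/2}$ eventually. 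A union bound over the at most $n^{2}$ pairs $T$ then yields
\[
\Prob\big[\Delta_2(\Hrnp)>2\ln n\big]\;\le\;n^{2}\cdot n^{-\ln n}\;=\;n^{2-\ln n}\;\le\;\frac1n
\]
as soon as $\ln n\ge 3$, which is exactly the asserted bound (for all sufficiently large $n$).

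There is no substantial obstacle here; the only point that needs care is the choice of tail estimate. Applying the additive Chernoff bound $\Prob[X\ge\mu+t]\le\exp\!\big(-t^{2}/(2(\mu+t/3))\big)$ of Theorem~\ref{chernoff1} with $t\approx 2\ln n$ only gives a per-pair tail of order $n^{-3/4}$, which cannot survive a union bound over $\Theta(n^{2})$ pairs. One therefore has to use the multiplicative form of the binomial tail above, i.e.\ exploit that the target value $2\ln n$ tends to infinity while the mean $\mu$ tends to $0$, so that the factor $k!\ge (k/e)^{k}$ is what drives the estimate and produces a super-polynomially small per-pair probability.
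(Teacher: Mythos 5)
Your proof is correct, and the multiplicative binomial tail bound you use (namely $\Prob[X\ge k]\le\binom{N}{k}p^{k}\le(e\mu/k)^{k}$) is a clean way to get an extremely strong per-pair bound of $n^{-\ln n}$. However, your closing paragraph contains a genuine error: you claim that the paper's Theorem~\ref{chernoff1} only gives a per-pair tail of order $n^{-3/4}$ and so cannot survive the union bound. This is not so. Applying Theorem~\ref{chernoff1} with $t=2\ln n-\mu$ (so that $\mu+t=2\ln n$) and using $\mu=o(1)$ gives
\[
\Prob\big[\deg_{\Hrnp}(T)\ge 2\ln n\big]\;\le\;\exp\!\left(-\frac{t^{2}}{2(\mu+t/3)}\right)\;=\;\exp\!\left(-\frac{(4-o(1))\ln^{2}n}{(4/3+o(1))\ln n}\right)\;=\;n^{-3+o(1)},
\]
since the denominator $2(\mu+t/3)$ is dominated by $(2/3)t\approx(4/3)\ln n$ rather than by the vanishing expectation $\mu$. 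A union bound over at most $n^{2}$ pairs then yields $n^{-1+o(1)}\le 1/n$ for $n$ large, which is exactly what the paper does. So both routes work; yours happens to give a sharper per-pair estimate, but the paper's one-line reduction to Theorem~\ref{chernoff1} is perfectly adequate, and the obstacle you describe does not exist.
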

\begin{proof}
The statement is an immediate consequence of Chernoff's inequality, Theorem~\ref{chernoff1}, and the union bound over $\binom{n}{2}$ possible pairs of vertices. 
\end{proof}

Next we verify that the edges in the random hypergraph $\Hrnp$ are distributed as expected. 

\begin{lemma}\label{lem:upper_disc}
For every integer $r\geq 3$, reals $\eps$, $p>0$ and sufficiently large integer $n$  the following holds  with probability at least $1-\tfrac{2}{n}$ for  any pair of disjoint subsets $T$ and $U$ of $[n]$ with $|U|\le |T|$:
\[
e_{\Hrnp}\left(T,\binom{U}{r-1}\right)\le (1+\eps)p|T|\binom{|U|}{r-1}+|T|\ln^{1+\eps} n.
\]
\end{lemma}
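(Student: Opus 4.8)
The plan is to fix a single admissible pair of disjoint sets $T,U\subseteq[n]$, obtain a Chernoff-type tail bound on $e_{\Hrnp}(T,\binom{U}{r-1})$ for that pair, and then take a union bound over all choices of $T$ and $U$. Write $s:=|T|$; we may assume $s\ge 1$ and $|U|\ge r-1$, since otherwise $e_{\Hrnp}(T,\binom{U}{r-1})=0$ and there is nothing to prove. For a fixed pair, the random variable $X:=e_{\Hrnp}(T,\binom{U}{r-1})$ is a sum of independent indicator variables, one for each of the $|T|\binom{|U|}{r-1}$ potential edges consisting of a vertex of $T$ together with an $(r-1)$-subset of $U$; thus $X\sim\Bin\bigl(|T|\binom{|U|}{r-1},\,p\bigr)$ with $\Exp[X]=p|T|\binom{|U|}{r-1}$. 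I would then apply Chernoff's inequality~II (Theorem~\ref{chernoff1}) with $t:=\eps\,\Exp[X]+|T|\ln^{1+\eps}n$, so that $\Exp[X]+t$ is exactly the right-hand side of the claimed inequality.

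The next step is to bound the resulting exponent $\tfrac{t^2}{2(\Exp[X]+t/3)}$ from below by $c(\eps)\,|T|\ln^{1+\eps}n$ for a constant $c(\eps)>0$. This is a short case distinction. If $\Exp[X]\le t/3$, then the denominator $2(\Exp[X]+t/3)$ is at most $\tfrac{4}{3}t$ and the exponent is at least $\tfrac34 t\ge\tfrac34|T|\ln^{1+\eps}n$. If $\Exp[X]>t/3$, then the denominator is at most $4\Exp[X]$, and combining $t\ge\eps\,\Exp[X]$ with $t\ge|T|\ln^{1+\eps}n$ gives $\tfrac{t^2}{4\Exp[X]}\ge\tfrac{\eps}{4}|T|\ln^{1+\eps}n$. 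So one may take $c(\eps):=\min\{3/4,\ \eps/4\}>0$, and for the fixed pair $(T,U)$ the probability that the asserted inequality fails is at most $\exp\bigl(-c(\eps)\,s\ln^{1+\eps}n\bigr)$.

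Finally I would sum this bound over all admissible pairs. The number of pairs $(T,U)$ with $T\cap U=\varnothing$, $|T|=s$, and $|U|\le s$ is at most $\binom{n}{s}\cdot(s+1)n^{s}\le n^{2s+1}$. Since $\eps>0$ is fixed and $\ln^{\eps}n\to\infty$, for all sufficiently large $n$ (depending only on $r$ and $\eps$) we have $c(\eps)\ln^{1+\eps}n\ge 4\ln n$, so each pair with $|T|=s$ contributes at most $n^{2s+1}\exp(-c(\eps)s\ln^{1+\eps}n)\le n\cdot n^{-2s}$; summing the geometric series over $s\ge1$ gives a total of less than $2/n$, as required. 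The only genuinely delicate point is this interplay: the single-pair failure probability must beat a union bound over roughly $\binom{n}{s}^2$ pairs, and this works precisely because the error term carries the exponent $1+\eps$ rather than $1$ — with only a $|T|\ln n$ error term, a factor $\exp(-\Theta(s\ln n))$ would not dominate $\binom{n}{s}^2=e^{\Theta(s\ln n)}$ and the union bound would fail.
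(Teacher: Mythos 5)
Your argument is correct and follows essentially the same route as the paper: apply Chernoff's inequality~II to the binomial variable $e_{\Hrnp}\bigl(T,\binom{U}{r-1}\bigr)$, perform a case distinction on whether the mean or the $|T|\ln^{1+\eps}n$ error term dominates, and close with a union bound over all admissible pairs $(T,U)$. The only cosmetic difference is that the paper applies Chernoff twice with two different choices of $t$ ($t=\eps\Exp[X]$ when $\Exp[X]$ is large, $t=|T|\ln^{1+\eps}n$ when it is small), whereas you apply it once with $t=\eps\Exp[X]+|T|\ln^{1+\eps}n$ and move the case distinction into the estimate of the exponent; the two organizations are equivalent.
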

\begin{proof}
Consider the random variable $X=e_{\Hrnp}\left(T,\binom{U}{r-1}\right)$, then its expectation is clearly: $\EE[X]=p|T|\binom{|U|}{r-1}$. 
If $\EE[X]\geq \frac{12}{\eps^2}|T|\ln n$,
then Chernoff's inequality, Theorem~\ref{chernoff1}, with $t=\eps \EE[X]$ gives
\[
\PP\left[X\ge (1+\eps)\EE[X]\right]\le 
\exp\left(-\frac{\eps^2}{3} \EE[X] \right)
\le n^{-4|T|}.
\]
If $\EE[X]< \frac{12}{\eps^2}|T|\ln n$,
then Chernoff's inequality, Theorem~\ref{chernoff1}, with $t=|T| \ln^{1+\eps} n$ gives
\[
\PP\left[X\ge \EE[X]+t\right]\le 
\exp\left(- |T| \ln^{1+\eps} n \right)
\le n^{-4|T|}.
\]
Since there are at most $\binom{n}{|T|}\binom{n}{|U|}\le n^{2|T|}$ pairs $(T,U)$ with fixed sizes satisfying $|T|\ge |U|$ we use union bound to obtain that with probability at least $1-\sum_{i=1}^n n\cdot n^{2i} n^{-4i}\ge 1-\tfrac{2}{n}$ for all pairs of disjoint sets $(T,U)$ with $|T|\ge |U|$ it holds that
\[
e_{\Hrnp}\left(T,\binom{U}{r-1}\right)\leq (1+\eps)p|T|\binom{|U|}{r-1}+|T|\ln^{1+\eps} n.
\]
\end{proof}

\begin{lemma}\label{lem:upper_disc_II}
For every integer $r\geq 3$, reals $\eps,p>0$ with $\eps\in (0,3/2)$ and sufficiently large integer $n$,  the following holds  with probability at least $1-\tfrac{2}{n}$ for  any pair of disjoint subsets $T$ and $U$ with $\eps|U|\le |T| \le |U|\le n/2$ and $|U|\ge m:=\left(\frac{13 (r-1)!\ln n}{\eps^{3}p}\right)^{1/(r-1)}$:
\[
e_{\Hrnp}\left(T,\binom{U}{r-1}\right)\le (1+\eps)p|T|\binom{|U|}{r-1}.
\]
\end{lemma}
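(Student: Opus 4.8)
The plan is to use Chernoff's inequality (Theorem~\ref{chernoff1}) for each individual pair $(T,U)$ of fixed sizes, then take a union bound over all such pairs. Fix disjoint sets $T,U$ with $\eps|U|\le|T|\le|U|\le n/2$ and $|U|\ge m$. As in Lemma~\ref{lem:upper_disc}, set $X=e_{\Hrnp}\bigl(T,\binom{U}{r-1}\bigr)$, so $\EE[X]=p|T|\binom{|U|}{r-1}$. The key observation is that the lower bound $|U|\ge m$ is precisely calibrated so that $\EE[X]$ is large enough to absorb the union-bound cost: indeed, using $|T|\ge\eps|U|$ and $\binom{|U|}{r-1}\ge \frac{|U|^{r-1}}{2(r-1)!}$ (valid for large $n$, since $|U|\ge m\to\infty$), one gets
\[
\EE[X]\ \ge\ p\cdot\eps|U|\cdot\frac{|U|^{r-1}}{2(r-1)!}\ =\ \frac{\eps p\,|U|^{r}}{2(r-1)!}\ \ge\ \frac{\eps p\,m^{r-1}}{2(r-1)!}\cdot|U|\ \ge\ \frac{13}{2\eps^{2}}\,|U|\ln n\ \ge\ \frac{12}{\eps^{2}}\,|T|\ln n,
\]
where the last step uses $|T|\le|U|$. (I am being slightly cavalier about the constant $13/2$ versus $12$; one chooses the constant in the definition of $m$ a touch larger, or absorbs the slack into "sufficiently large $n$". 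The paper's choice $13$ is exactly what makes this go through after the $\binom{|U|}{r-1}\ge |U|^{r-1}/(2(r-1)!)$ estimate.)

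With $\EE[X]\ge\frac{12}{\eps^2}|T|\ln n$ in hand, I apply Theorem~\ref{chernoff1} with $t=\eps\EE[X]$ — legitimate since $\eps\le 3/2$ — to obtain
\[
\PP\bigl[X\ge(1+\eps)\EE[X]\bigr]\ \le\ \exp\!\left(-\frac{t^2}{2(\EE[X]+t/3)}\right)\ \le\ \exp\!\left(-\frac{\eps^2\EE[X]}{3}\right)\ \le\ \exp\bigl(-4|T|\ln n\bigr)\ =\ n^{-4|T|},
\]
using $t/3\le\EE[X]/2$ so that $2(\EE[X]+t/3)\le 3\EE[X]$, and then $\EE[X]\ge\frac{12}{\eps^2}|T|\ln n$.

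Finally I take a union bound over all admissible pairs. For each pair of sizes $(t,u)$ with $t\le u$ there are at most $\binom{n}{t}\binom{n}{u}\le n^{2t}$ pairs, so the total failure probability is at most
\[
\sum_{u=1}^{n/2}\sum_{t=1}^{u} n^{2t}\cdot n^{-4t}\ \le\ \sum_{t\ge 1} n\cdot n^{-2t}\ \le\ \frac{2}{n}
\]
for $n$ large. On the complementary event, every admissible pair $(T,U)$ satisfies $X\le(1+\eps)p|T|\binom{|U|}{r-1}$, as desired.

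I expect the only genuinely delicate point to be the bookkeeping around the constant in $m$: one must check that the factor lost in passing from $\binom{|U|}{r-1}$ to $|U|^{r-1}/(2(r-1)!)$, combined with the $\eps|U|\le|T|$ slack, still leaves $\EE[X]$ comfortably above $\frac{12}{\eps^2}|T|\ln n$. The $\eps^{3}$ in the denominator of $m$ (rather than $\eps^2$) is the culprit: one factor of $\eps$ comes from $|T|\ge\eps|U|$ and the remaining $\eps^2$ from the Chernoff exponent, exactly matching the $\eps^{3}$ in the definition of $m$. Everything else is routine Chernoff-plus-union-bound, essentially identical to the proof of Lemma~\ref{lem:upper_disc}.
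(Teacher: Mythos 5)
The overall plan — pointwise Chernoff followed by a union bound, with the lower bound on $|U|$ calibrated to make $\EE[X]$ large — is exactly the paper's, and your intermediate estimate $\EE[X]\ge\frac{13}{2\eps^2}|U|\ln n$ is sound. But the union bound step contains a genuine error. You wrote $\binom{n}{t}\binom{n}{u}\le n^{2t}$ for pairs with $t\le u$; this is false (it would be correct if $t\ge u$, which is the situation in Lemma~\ref{lem:upper_disc}, but here the inequality is reversed). The correct crude bound is $\binom{n}{t}\binom{n}{u}\le n^{t+u}\le n^{2u}$ — the exponent must be governed by the \emph{larger} set $U$, not $T$. Consequently, the Chernoff tail bound must also be expressed in terms of $|U|$: the bound $n^{-4|T|}$ that you derived only gives $n^{-4\eps|U|}$ via $|T|\ge\eps|U|$, and $n^{2|U|}\cdot n^{-4\eps|U|}$ is not small when $\eps<1/2$. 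So as written the argument fails for small $\eps$, which is the regime of interest.

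The fix is short and already latent in your own computation: instead of passing from $\EE[X]\ge\frac{13}{2\eps^2}|U|\ln n$ to the weaker $\frac{12}{\eps^2}|T|\ln n$, keep the $|U|$ form. Then
\[
\PP\bigl[X\ge(1+\eps)\EE[X]\bigr]\le\exp\!\left(-\frac{\eps^2\EE[X]}{3}\right)\le\exp\!\left(-\frac{13}{6}|U|\ln n\right)=n^{-13|U|/6},
\]
which beats $n^{2|U|}$ with exponent gap $|U|/6$; since $|U|\ge m\to\infty$, the sum $\sum_{u\ge m}\sum_{t}n^{2u}\cdot n^{-13u/6}$ is $\le 2/n$ for $n$ large. (The paper instead bounds the number of pairs by $\left(en/|U|\right)^{2|U|}$ and obtains $n^{-3|U|}$ from the two-sided Chernoff, giving more slack, but the two routes are equivalent in spirit.) In short: same method as the paper, but your proof as stated has a concrete hole — the $n^{2t}$ in the union bound — that must be repaired by working with $|U|$ throughout.
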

\begin{proof}
Consider the random variable $X=e_{\Hrnp}\left(T,\binom{U}{r-1}\right)$, then its expectation is clearly: $\EE[X]=p|T|\binom{|U|}{r-1}$. By Chernoff's inequality, Theorem~\ref{thm:chernoff}, we have for fixed disjoint sets $U$  and $T$ with $|T|\ge \eps |U|\ge \eps m$:
\[
\PP\left[X\ge (1+\eps)\Exp[X]\right]\le 2\exp\left(-\frac{\eps^2 \EE[X]}{3}\right)\le 
2\exp\left(-\eps^{3}p |U|^r/(4(r-1)!)\right)\le n^{-3|U|}.
\]
Since there are at most $\binom{n}{|T|}\binom{n}{|U|}\le \left(en/|U|\right)^{2|U|}$ pairs $(T,U)$ of fixed sizes $|T|$ and $|U|$ we use union bound to obtain that with probability at least $1-\sum_{i=m}^{n/2}\sum_{j=\eps i}^{i} (en/i)^{2i} n^{-3i}\ge 1-\tfrac{2}{n}$ for all pairs of disjoint sets $(T,U)$ it holds that
\[
e_{\Hrnp}\left(T,\binom{U}{r-1}\right)\leq (1+\eps)p|T|\binom{|U|}{r-1}.
\]
\end{proof}

Lemmas~\ref{lem:upper_disc} and~\ref{lem:upper_disc_II} motivate the following definition of an $(\eps,p)$-pseudorandom hypergraph.

\begin{definition}[$(\eps,p)$-pseudorandomness]
Given $p>0$ and $\eps\in(0,3/2)$, a hypergraph $\cH$ on $n$ vertices is $(\eps,p)$-pseudorandom if it satisfies the following properties:
\begin{enumerate}[(i)]
\item \label{psrand:i} for any pair of disjoint subsets $T$ and $U$ of $[n]$ with $|U|\le |T|$ holds:
 \[
 e_{\cH}\left(T,\binom{U}{r-1}\right)\le (1+\eps)p|T|\binom{|U|}{r-1}+|T|\ln^{1+\eps} n~ ,
 \]
\item \label{psrand:ii} for  any pair of disjoint subsets $T$ and $U$
 with $\eps|U|\le |T| \le |U|\le n/2$ and $|U|\ge \left(\frac{13 (r-1)!\ln n}{\eps^{3}p}\right)^{1/(r-1)}$ holds:
\[
e_{\Hrnp}\left(T,\binom{U}{r-1}\right)\le (1+\eps)p|T|\binom{|U|}{r-1}.
\]
\end{enumerate}
\end{definition}

\subsection{A sampling lemma}
We will repeatedly use the fact that in a hypergraph of high minimum degree, a random subset of vertices inherits high minimum degree of every vertex. More precisely, we prove the following.
\begin{lemma}\label{lem:sampling}
Let $r\geq 3$, reals $\gamma>0$, $c'>0$ and $c>2+2c'$ be given. Let $H=(V,E)$  be an $r$-uniform hypergraph on $n$ vertices and  $V'$ be a subset of $V$ with at least $2m$ vertices, where  $m\ge \frac{n}{\ln^c n}$. Furthermore assume that  
$\deg_H(v,V')\ge \gamma  p\binom{|V'|}{r-1}$ for all $v\in V$ and $\Delta_2(H)\le 2\ln n$, where 
$p \geq \frac{\ln^{c r} n}{n^{r-1}}$. Then the following holds for all $n$ sufficiently large.  

There exists a set $U\subset V'$ of size $m$ such that
\begin{enumerate}[(i)]
\item $\deg_H(v,U)\ge (1-\tfrac{1}{\ln^{c'} n})\gamma p\binom{|U|}{r-1}$ for all $v\in V$, and
\item $\deg_H(v,V'\setminus U)\ge (1-\tfrac{1}{\ln^{c'} n})\gamma p\binom{|V'|-|U|}{r-1}$ for all $v\in V$.
\end{enumerate}
\end{lemma}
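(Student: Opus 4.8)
The plan is to pick $U$ as a uniformly random $m$-subset of $V'$ and show that both degree conditions hold simultaneously with positive probability by a union bound over the (at most $n$) vertices and the two conditions. Fix a vertex $v\in V$ and write $D:=\deg_H(v,V')$, so by hypothesis $D\ge \gamma p\binom{|V'|}{r-1}$. The random variable $\deg_H(v,U)$ counts those edges through $v$ lying in $U\cup\{v\}$, i.e.\ those $(r-1)$-sets $S$ in the link of $v$ inside $V'$ with $S\subseteq U$. This is a sum of indicator variables over a hypergeometric-type sampling, and its expectation is exactly $D\cdot \binom{|V'|-1}{r-1}^{-1}\binom{m-1}{r-1}$ if $v\in V'$ (and the analogous expression with $|V'|$ in place of $|V'|-1$ if $v\notin V'$); in either case this equals $(1+o(1))\gamma p\binom{m}{r-1}$, so after absorbing the lower-order factor into the $\ln^{-c'}n$ slack it suffices to prove a lower-tail concentration bound.

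The concentration step is the heart of the argument, and the obstacle is that $\deg_H(v,U)$ is a sum of \emph{dependent} indicators (the events $S\subseteq U$ for different $(r-1)$-sets $S$ are negatively correlated but not independent, because $U$ has fixed size), so Chernoff does not apply directly. I would handle this in one of two standard ways. Option one: replace the fixed-size model by the binomial model — let $U'$ include each vertex of $V'$ independently with probability $q:=(m+\sqrt{m})/|V'|$ (or similar), apply Janson's inequality (Theorem~\ref{thm:janson}) to the lower tail of $\deg_H(v,U')$, using the codegree bound $\Delta_2(H)\le 2\ln n$ to control the correlation term $\overline\Delta$, and then condition on $|U'|\ge m$ (which holds with probability bounded away from $0$) and pass to an $m$-subset of $U'$, noting the degree can only decrease, then rescale. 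Option two: keep the fixed-size model and directly invoke the well-known fact that sums of negatively-associated indicators satisfy Chernoff-type bounds; the collection $\{\mathbf 1[S\subseteq U]\}$ is negatively associated since the $\mathbf 1[v\in U]$ are (sampling without replacement), and products of disjoint subfamilies preserve this — but to avoid relying on machinery not in the excerpt I would present Option one. In either case the key quantitative input is that $\overline\Delta \le \EE[X] + \EE[X]\cdot\Delta_2(H)\cdot\frac{\text{(pairs of }S\text{'s sharing a vertex)}}{\cdots}$, which one bounds by $\EE[X](1+O(r\cdot 2\ln n / (\gamma p \binom{m}{r-2})))$; since $p\ge \ln^{cr}n/n^{r-1}$ and $m\ge n/\ln^c n$ one checks $\gamma p\binom{m}{r-2}\gg \ln^{2+2c'}n\cdot \ln n$, so the failure probability from Janson is at most $\exp(-\Omega(\EE[X]/\ln^{2c'}n))\le \exp(-\Omega(\ln^{?}n))\ll 1/n$, with room to spare.

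Concretely the steps are: (1) compute $\EE[\deg_H(v,U)]$ in the uniform model and check it is $(1\pm o(1))\gamma p\binom{m}{r-1}$ using $m,|V'|-m \ge n/\ln^c n$; (2) set up the binomial relaxation and the auxiliary independent set $U'$; (3) verify the hypotheses of Janson, in particular estimate $\overline\Delta$ via $\Delta_2(H)\le 2\ln n$ and the lower bound on $p$, obtaining a lower-tail bound of the form $\Prob[\deg_H(v,U') < (1-\ln^{-c'}n)\gamma p\binom{m}{r-1}] \le 1/(4n)$; (4) do the same for $\deg_H(v,V'\setminus U)$, which in the binomial model is just the complementary event and handled symmetrically (note $|V'\setminus U|\ge m$, so the same bounds apply); (5) condition on $|U'|$ being in the right window — an event of probability $\Theta(1)$ by the central limit theorem / Chernoff for $|U'|\sim\Bin(|V'|,q)$ — and union bound over $v\in V$ and the two conditions to conclude that a valid $U$ of size exactly $m$ exists (discarding or padding $U'$ down to size $m$ only decreases degrees into $U$, while degrees into $V'\setminus U$ are handled by choosing the window so that $|V'\setminus U'|\ge m$ as well). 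The only genuinely delicate point is ensuring the slack $\ln^{-c'}n$ is large enough to swallow both the expectation correction from sampling-without-replacement and the Janson deviation, which is exactly where the relation $c>2+2c'$ and $p\ge \ln^{cr}n/n^{r-1}$ are used.
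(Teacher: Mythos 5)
Your approach is essentially the paper's: replace the fixed-size model by independent inclusion with parameter $q$, express $\deg_H(v,U')$ as a sum of indicators $\prod_{u\in A}t_u$ over the $(r-1)$-sets $A$ in the link of $v$, apply Janson's inequality, and control the correlation term $\overline{\Delta}$ with the codegree bound $\Delta_2(H)\le 2\ln n$. The expectation and $\overline{\Delta}$ estimates you sketch match the paper's ($\EE[X_v]\ge \gamma\ln^c n/(2(r-1)!)$ and $\overline{\Delta}\le \EE[X_v](1+2rq\ln n)$), and this is where $c>2+2c'$ enters, exactly as you say.

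There is, however, a genuine gap in your step~(5), the passage from $U'$ to a set of size exactly $m$. You condition on $|U'|$ landing in a $\Theta(\sqrt m)$-wide window (constant probability) and then \emph{trim} $U'$ down to $m$ vertices, noting the degree can only decrease. But it can decrease a lot: removing one vertex $u$ from $U'$ destroys up to $\deg_H(\{v,u\})\le \Delta_2(H)=2\ln n$ edges at $v$, so trimming $\Theta(\sqrt m)$ vertices can cost up to $\Theta(\sqrt m\,\ln n)$ edges. Since $\gamma p\binom{m}{r-1}=\Theta(\ln^c n)$ while $\sqrt m\,\ln n=\Theta(\sqrt{n}\,\ln^{1-c/2}n)\gg\ln^c n$, an adversarial trim can wipe out the degree entirely; the $\ln^{-c'}n$ slack does not come close to absorbing this. (Padding has the symmetric problem for $V'\setminus U$.) One could rescue the argument with a second random trimming plus another concentration step, but as written it does not close. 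The paper sidesteps this entirely: take $q=m/|V'|$ and condition on the \emph{exact} event $\{|U|=m\}$, which by the local limit theorem has probability $\Theta(n^{-1/2})$. Since the Janson failure probability per vertex is $n^{-3}$, the union bound over all $v$ and both parts is $O(n^{-1})$, still dominated by $\Theta(n^{-1/2})$, so the intersection is nonempty and one obtains a set $U$ of size exactly $m$ satisfying (i) and (ii) with no trimming at all. To fix your write-up, drop the window-and-trim step and use the same exact-size conditioning, which also forces you to strengthen your per-vertex failure bound from $1/(4n)$ to $o(n^{-3/2})$.
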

\begin{proof}
We choose a set $U$ randomly, by including every vertex $u$ from $V'$ into $U$ with probability $q=\tfrac{m}{n'}$ independently, where we set $n':=|V'|$. Thus, we associate with every vertex $u$ a Bernoulli variable $t_u$ with parameter $q$. 

For a given vertex $v\in V$ let $X_v$ be the random variable for $\deg_H(v,U)$. Clearly, we can write 
$X_v$ as the following sum of indicator random variables:
 \[
 X_v=\sum_{A\in \cS} X_{A}, 
 \]
 where $X_{A}=\prod_{u\in A}t_u$ and $\cS=\{e\setminus\{v\}\colon e\in E(H),  v\in e \}$.

We are going to apply Theorem~\ref{thm:janson}  and for this we put the following estimates: 
$\EE[X_v]=q^{r-1}\deg_H(v)\ge q^{r-1}\gamma  p\binom{n'}{r-1}\ge \gamma\ln^{c} n/(2(r-1)!)$, 
and 
\begin{align*}
\overline{\Delta}
& =\Exp[X_v] + \sum\limits_{A\in{\cS}\atop} \sum\limits_{B\in{\cS}: \atop A\cap B\neq \varnothing, A\neq B}   \Exp[X_AX_B] \\
& \le \EE[X_v]+\deg_H(v)(r-1)\Delta_2(H)q^{r}\le \EE[X_v](1+2 rq\ln n).
\end{align*}

Then Theorem~\ref{thm:janson} yields
\[
\PP\left[X_v\le \EE[X_v]-\tfrac{1}{\ln^{c'}n}\EE[X_v]\right]
	\le \exp\left(-\tfrac{\EE[X_v]^2}{2(\ln^{2c'}n)\EE[X_v](1+2 rq\ln n)}\right)
	\leq \exp\left(-\tfrac{\EE[X_v]}{4r \ln^{1+2c'} n}\right)
	<n^{-3}.
\]

A similar argument applies also to $V'\setminus U$. Taking union bound, with probability, say, at least $1-\tfrac{1}{n}$ the properties (i) and (ii) are satisfied by all vertices $v\in V$.

On the other hand, $\PP[|U|=m]=\PP[|V'\setminus U|=n'-m]=\binom{n'}{m}q^m(1-q)^{n'-m}=\frac{(1+o(1))}{\sqrt{2\pi q(n'-m)}}\ge (1+o(1))\sqrt{\frac{2}{\pi n'}}$. Therefore, with positive probability (at least $(1+o(1))\sqrt{\frac{2}{\pi n'}}-\tfrac{1}{n}$) there exists a desired set $U$.
\end{proof}

\subsection{Matchings}
Our building blocks for Hamilton cycles will consist of collections of edges between pairs of equal-sized sets, which we will refer to as $(U_1,U_2)$-matchings. Moreover, these edges will intersect both sets $U_1$ and $U_2$ in a clearly specified way. The following two definitions make these ideas precise.

\begin{definition}[An $(i,j)$-edge for $(U_1,U_2)$]
Given an $r$-uniform hypergraph $H=(V,E)$, two disjoint subsets $U_1$, $U_2\subseteq V$ and an edge $e\in E$. We call $e$ an $(i,j)$-edge for $(U_1,U_2)$, if $|e\cap U_1|=i$ and $|e\cap U_2|=j$ hold.
\end{definition}

\begin{definition}[$(U_1,U_2)$-matching in $H$]
Given an $r$-uniform hypergraph $H=(V,E)$ and two disjoint subsets $U_1$, $U_2\subseteq V$ with $|U_2|\geq |U_1|=m$. We call the set $M=\{e_1,\ldots,e_m\}$  a $(U_1,U_2)$-matching in $H$ if there exists a matching $M'=\{a_ib_i\colon a_i\in U_1, b_i\in U_2, i\in[m]\}$ in the complete bipartite graph $K_{U_1,U_2}$ with classes $U_1$ and $U_2$ such that $a_i$, $b_i\in e_i$ and $e_i$ is a $(1,r-1)$-edge or a $(r-1,1)$-edge for $(U_1,U_2)$ for every $i\in [m]$. 

We call the vertices $a_i$ and $b_i$ the \emph{endpoints of the matching edge $e_i$}.
\end{definition}

The next lemma asserts that between two disjoint subsets $U_1$, $U_2$ of vertices of high `minimum degree' and of size $m=n/\polylog(n)$ there must always be a $(U_1,U_2)$-matching $M$ which intersects only $U_1$ and  $U_2$ in the `pattern' $(1,r-1)$ or $(r-1,1)$.  Its proof is an application of Hall's matching criterion under the exploitation of  the properties of $(\eps,p)$-pseudorandom hypergraphs.

\begin{lemma}\label{lem:matching}
For every integer $r\geq 3$, every real $\gamma \in(0,1)$ and $c>1$ there exists an $\eps>0$ such that  the following holds for any $(\eps,p)$-pseudorandom $r$-uniform hypergraph $\cH$ on $n$ vertices with  $n$ sufficiently large and $p \geq \frac{\ln^{c r} n}{n^{r-1}}$. 
Let $H \subseteq \mathcal H$  be a subgraph of $\cH$ 
 and let $U_1$, $U_2$ be disjoint subsets of $V(H)$ with $|U_1|=|U_2|=m\ge \frac{n}{\ln^c n}$ such that $\deg_{U_i}(u)\ge \left(\frac{1}{2^{r-1}}+\gamma\right)p\binom{|U_i|}{r-1}$ for every $u\in U_j\neq U_i$ and $i=1,2$. Then there exists a 
  $(U_1,U_2)$-matching in $H$.
\end{lemma}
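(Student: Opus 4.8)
The plan is to reduce the problem to Hall's theorem in an auxiliary bipartite graph. Let $B$ be the bipartite graph with parts $U_1$ and $U_2$ in which $a\in U_1$ and $b\in U_2$ are joined precisely when $H$ contains a hyperedge $e$ with $a,b\in e$ that is a $(1,r-1)$-edge or an $(r-1,1)$-edge for $(U_1,U_2)$. A perfect matching $M'=\{a_ib_i\colon i\in[m]\}$ of $B$ immediately yields a $(U_1,U_2)$-matching: for each $i$ pick an arbitrary witnessing hyperedge $e_i$. These $e_i$ are automatically distinct, since a $(1,r-1)$-edge (resp.\ an $(r-1,1)$-edge) meets $U_1$ (resp.\ $U_2$) in a single vertex, so two matching pairs served by a common hyperedge would share an endpoint, contradicting that $M'$ is a matching. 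Hence it suffices to verify Hall's condition $|N_B(S)|\ge|S|$ for every $S\subseteq U_1$.

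The key translation between $B$ and $H$ is the following. For $a\in U_1$, every $(1,r-1)$-edge of $H$ containing $a$ has its $r-1$ remaining vertices inside $N_B(\{a\})$, and there are exactly $\deg_{U_2}(a)\ge\left(\tfrac{1}{2^{r-1}}+\gamma\right)p\binom{m}{r-1}$ such edges; moreover distinct $a$'s give distinct such edges. Thus, writing $T:=N_B(S)$,
\[
e_{\cH}\!\left(S,\binom{T}{r-1}\right)\ \ge\ e_{H}\!\left(S,\binom{T}{r-1}\right)\ \ge\ |S|\left(\tfrac{1}{2^{r-1}}+\gamma\right)p\binom{m}{r-1}.
\]
Symmetrically, for $b\in U_2\setminus T$ every $(r-1,1)$-edge of $H$ through $b$ has its other $r-1$ vertices inside $U_1\setminus S$ (otherwise $b$ would lie in $T$), so $e_{H}\!\left(U_2\setminus T,\binom{U_1\setminus S}{r-1}\right)\ge |U_2\setminus T|\left(\tfrac{1}{2^{r-1}}+\gamma\right)p\binom{m}{r-1}$.

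Now suppose Hall's condition fails, so $|T|<|S|$ for some $S\subseteq U_1$. If $|S|\le m/2$, then $|T|<m/2$, so property~\ref{psrand:i} of $(\eps,p)$-pseudorandomness applies with outer set $S$ and inner set $T$ (note $|T|<|S|$), and after dividing the resulting inequality by $|S|$ we obtain $\left(\tfrac{1}{2^{r-1}}+\gamma\right)p\binom{m}{r-1}\le(1+\eps)p\binom{|T|}{r-1}+\ln^{1+\eps}n$. Since $m\ge n/\ln^{c}n$ and $p\ge\ln^{cr}n/n^{r-1}$ we have $p\binom{m}{r-1}=\Omega(\ln^{c}n)$, so for $\eps<c-1$ the additive term $\ln^{1+\eps}n$ is a $o(1)$-fraction of the left-hand side. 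Using $\binom{|T|}{r-1}\le|T|^{r-1}/(r-1)!$ and $\binom{m}{r-1}=(1-o(1))m^{r-1}/(r-1)!$, this forces $|T|\ge(1-o(1))\big(\tfrac{1/2^{r-1}+\gamma}{1+\eps}\big)^{1/(r-1)}m$; since $(1/2^{r-1})^{1/(r-1)}=1/2$, taking $\eps$ small enough (in terms of $\gamma$ and $r$) makes this bound strictly larger than $m/2$, contradicting $|T|<m/2$. If instead $|S|>m/2$, run the identical computation on the pair $(U_2\setminus T,\,U_1\setminus S)$, where now $|U_1\setminus S|<m/2<|U_2\setminus T|$: the symmetric lower bound together with property~\ref{psrand:i} forces $|U_1\setminus S|>m/2$, again a contradiction. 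Hence Hall's condition holds, $B$ has a perfect matching, and the lemma follows. Concretely one may fix $\eps:=\min\{(c-1)/2,\ \gamma\}$, which lies in $(0,3/2)$ and is small enough for all the estimates above.

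The main obstacle is the calibration of $\eps$: pseudorandomness only supplies the lossy bound $(1+\eps)p\binom{|T|}{r-1}$, and one has to combine this with the exact identity $(1/2^{r-1})^{1/(r-1)}=1/2$ to see that the degree surplus $\gamma$ is exactly what pushes $|N_B(S)|$ past the threshold $m/2$ in the small-$S$ regime, after which the large-$S$ regime is dispatched by passing to complements (this is why the hypothesis is stated for both orderings of the pair). A secondary but essential point is verifying that the additive error term of property~\ref{psrand:i} is negligible, which is precisely where the assumption $c>1$ enters.
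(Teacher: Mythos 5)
Your proof is correct and takes essentially the same route as the paper: both verify Hall's condition in the auxiliary bipartite graph, use the degree hypothesis for the lower bound, and use pseudorandomness property~\ref{psrand:i} for the upper bound, with the surplus $\gamma$ forcing $|N_B(S)|>m/2$ for small $S$ and the complement trick handling large $S$ (the paper states the reduction to sets of size at most $\lceil m/2\rceil$ on both sides without spelling it out, whereas you make the case split explicit). One small inaccuracy worth flagging: in the case $|S|>m/2$ the asserted inequality $m/2<|U_2\setminus T|$ need not hold (only $|U_1\setminus S|<|U_2\setminus T|$ follows from $|T|<|S|$), but this is harmless since property~\ref{psrand:i} only requires $|U_1\setminus S|\le|U_2\setminus T|$ and the computation concludes $|U_1\setminus S|>m/2$, which already contradicts the case assumption $|U_1\setminus S|<m/2$.
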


\begin{proof}
We choose with foresight $\eps\le 2^{r-3}\gamma$ such that $1+\eps<c$ holds.

For $T\subseteq U_1$ we define the neighbourhood $N(T)\subseteq U_2$ as follows (and similarly one defines $N(T)\subseteq U_1$ for $T\subseteq U_2$)
\[
N(T)=\{b\colon \exists\, a\in T,\, e\in H\text{ with }a, b\in e, |e\cap U_2|=r-1\}. 
\]

It will be sufficient to verify $|N(T)|\ge |T|$ for sets $T\subseteq U_i$ (Hall's condition) with $|T|\le \lceil m/2\rceil$ where $i\in[2]$. Assume w.l.o.g.\ that $T\subseteq U_1$ and further suppose towards a contradiction that $|N(T)|<|T|\le \lceil m/2\rceil$. Then, by the assumptions of the lemma, we have 
\[
e_H\left(T,\binom{U_2}{r-1}\right)
=\sum_{u\in T}\deg_{U_2}(u)\ge |T|\left(\frac{1}{2^{r-1}}+\gamma\right)p\binom{m}{r-1}.\]
 On the other hand, it follows from the $(\eps,p)$-pseudorandomness property~\ref{psrand:i} of $\cH$, that 
\[
e_H\left(T,\binom{U_2}{r-1}\right)=e_H\left(T,\binom{N(T)}{r-1}\right)\le (1+\eps)p|T|\binom{|N(T)|}{r-1}+|T|\ln^{1+\eps} n.
\] 
We estimate further 
$(1+\eps)p|T|\binom{|N(T)|}{r-1}< (1+\eps)p|T|\binom{\lceil m/2\rceil}{r-1}
\le \frac{1+2\eps}{2^{r-1}}p|T|\binom{m}{r-1}$. Comparing 
$\eps 2^{2-r}p|T|\binom{m}{r-1} +|T|\ln^{1+\eps} n$ with $\gamma p|T|\binom{m}{r-1}$, we obtain a contradiction in view of the choice of $\eps$ and the estimate $p\binom{m}{r-1}>\frac{\ln^{c r} n}{n^{r-1}} \frac{m^{r-1}}{r^{r-1}}\ge \frac{\ln^c n}{r^{r-1}}$.
\end{proof}

Analogously to Lemma~\ref{lem:matching}, one may prove almost verbatim the next lemma which asserts 
that between two disjoint subsets $U_1$, $U_2$ with $|U_2|/2\geq |U_1|=m=n/\polylog(n)$ and with all vertices in $U_1$ having high 'minimum degree' there must always be a $(U_1,U_2)$-matching $M$ consisting 
only of $(1,r-1)$-edges for $(U_1,U_2)$.  

\begin{lemma}\label{lem:matching2}
For every integer $r\geq 3$, every real $\gamma \in(0,1)$ and $c\geq 2$ there exists an $\eps>0$ such that  the following holds for any $(\eps,p)$-pseudorandom $r$-uniform hypergraph $\cH$ on $n$ vertices with  $n$ sufficiently large and $p \geq \frac{\ln^{c r} n}{n^{r-1}}$. 
Let $H \subseteq \mathcal H$  be a subgraph of $\cH$ 
 and let $U_1$, $U_2$ be disjoint subsets of $V(H)$ with $|U_2|/2\geq |U_1|=m\ge \frac{n}{\ln^c n}$ such that $\deg_H(u,U_2)\ge \left(\frac{1}{2^{r-1}}+\gamma\right)p\binom{|U_2|}{r-1}$ for every $u\in U_1$. 
Then there exists a 
  $(U_1,U_2)$-matching in $H$ consisting 
only of $(1,r-1)$-edges for $(U_1,U_2)$. \qed
\end{lemma}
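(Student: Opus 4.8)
The plan is to mimic the proof of Lemma~\ref{lem:matching} almost verbatim, replacing the two-sided Hall argument with a one-sided one, since now only the vertices of $U_1$ are assumed to have large degree and we only need to saturate $U_1$. First I would fix $\eps$ with foresight, say $\eps \le 2^{r-3}\gamma$ and also small enough that $1+\eps < c$; this is exactly the constraint that made the final contradiction work in Lemma~\ref{lem:matching}, and it will be reused here.

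Next I would set up the bipartite auxiliary graph $G$ on classes $U_1$ and $U_2$, where $a \in U_1$ is joined to $b \in U_2$ whenever there is a $(1,r-1)$-edge $e$ of $H$ for $(U_1,U_2)$ with $a,b\in e$. A $(U_1,U_2)$-matching consisting only of $(1,r-1)$-edges is then precisely a matching of $G$ saturating $U_1$ (picking one witnessing edge per matched pair). Since $|U_1| = m \le |U_2|/2$, by Hall's theorem it suffices to check $|N_G(T)| \ge |T|$ for every $T \subseteq U_1$. As in Lemma~\ref{lem:matching} I would split into the relevant range: it is enough to verify the condition for $|T| \le \lceil m/2 \rceil$, because for larger $T$ the neighbourhood already contains $N_G(T')$ for a subset $T'$ of size $\lceil m/2\rceil$, and if that has size $\ge \lceil m/2 \rceil$ we still need a little care — actually the cleanest route here is: if $|T| > m/2$ then pick any $a\in T$ and note (using the degree lower bound and pseudorandomness property~\ref{psrand:i}, or directly property~\ref{psrand:i} applied to $\{a\}$ and $U_2$) that even a single vertex has $\ge m/2 \ge |U_2| - |U_1|$ neighbours, wait — I would instead simply observe that for $T$ with $\lceil m/2\rceil < |T| \le m$ one has $N_G(T) \supseteq N_G(T_0)$ for $T_0 \subseteq T$ with $|T_0| = \lceil m/2 \rceil$, and by the small-$T$ case $|N_G(T_0)| \ge \lceil m/2 \rceil$; to finish I'd note that any single vertex of $U_1$ has degree into $U_2$ at least $(\tfrac{1}{2^{r-1}}+\gamma)p\binom{|U_2|}{r-1}$, and via property~\ref{psrand:i} applied to $U_2\setminus N_G(T)$ and $T$ this forces $|N_G(T)|$ large enough; the bookkeeping is identical in spirit to Lemma~\ref{lem:matching}.

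The core estimate for $|T| \le \lceil m/2\rceil$ is the double-counting already used before. On one hand, counting only $(1,r-1)$-edges,
\[
e_H\!\left(T,\binom{U_2}{r-1}\right) \;\ge\; \sum_{u\in T}\deg_H(u,U_2)\;-\;(\text{edges with }|e\cap U_1|\ge 2),
\]
and since every such "extra" edge is also an $(i,r-i)$-edge with $i\ge 2$ whose count is $O(|T|\ln n)$ by Proposition~\ref{prop:codegree}-type bounds — actually I would just work with the $(1,r-1)$-count directly by noting $\deg_H(u,U_2)$ only counts edges inside $U_2\cup\{u\}$, hence these are exactly $(1,r-1)$-edges, so $e_H(T,\binom{U_2}{r-1}) \ge |T|(\tfrac{1}{2^{r-1}}+\gamma)p\binom{|U_2|}{r-1} \ge |T|(\tfrac{1}{2^{r-1}}+\gamma)p\binom{2m}{r-1}$. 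On the other hand all these edges land in $N_G(T)$, so by pseudorandomness~\ref{psrand:i}, $e_H(T,\binom{N_G(T)}{r-1}) \le (1+\eps)p|T|\binom{|N_G(T)|}{r-1} + |T|\ln^{1+\eps} n$. If $|N_G(T)| < |T| \le \lceil m/2\rceil$, then $\binom{|N_G(T)|}{r-1} < \binom{\lceil m/2\rceil}{r-1} \le \tfrac{1+o(1)}{2^{r-1}}\binom{2m}{r-1}\cdot\frac{1}{?}$ — more carefully $\binom{m/2}{r-1}\le \frac{1}{2^{r-1}}\binom{m}{r-1}\cdot(1+o(1))$ and $\binom{m}{r-1} \le \binom{2m}{r-1}$, giving an upper bound of roughly $\tfrac{1+2\eps}{2^{r-1}}p|T|\binom{2m}{r-1} + |T|\ln^{1+\eps}n$, which contradicts the lower bound once $\eps 2^{2-r}p\binom{2m}{r-1} + \ln^{1+\eps}n < \gamma p\binom{2m}{r-1}$; this holds by the choice of $\eps$ and because $p\binom{2m}{r-1} \ge p\binom{m}{r-1} \ge \ln^c n / r^{r-1} \gg \ln^{1+\eps}n$, exactly as in Lemma~\ref{lem:matching}.

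I do not expect any genuine obstacle: the asymmetry only simplifies matters, since we no longer need to balance the two sides of Hall's condition and the slack coming from $|U_2| \ge 2|U_1|$ (rather than $|U_2| = |U_1|$) makes the factor $\binom{|N_G(T)|}{r-1}/\binom{|U_2|}{r-1}$ comfortably below $2^{1-r}$. The only mildly delicate point is handling $T$ with $|T| > m/2$ cleanly; I would phrase it as above, reducing to the small-$T$ case together with the trivial bound that a single vertex of $U_1$ has many neighbours in $U_2$, so that $|N_G(T)| \ge m \ge |T|$ follows. Hence Hall's condition holds throughout, $G$ has a matching saturating $U_1$, and choosing one witnessing $(1,r-1)$-edge for each matched pair yields the desired $(U_1,U_2)$-matching.
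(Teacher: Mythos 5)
Your core double-counting calculation is correct and matches the spirit of the proof of Lemma~\ref{lem:matching}, but the way you handle sets $T$ with $|T|>\lceil m/2\rceil$ has a genuine gap. You import the restriction to $|T|\le\lceil m/2\rceil$ from Lemma~\ref{lem:matching}, where it is needed because $|U_1|=|U_2|=m$ (so $\binom{|N(T)|}{r-1}$ can be close to $\binom{m}{r-1}$ unless $|N(T)|$ is forced $\le\lceil m/2\rceil$), and the large-$T$ case there is then closed by a deficiency/complement argument that relies on having degree conditions in \emph{both} directions. Lemma~\ref{lem:matching2} only gives degree conditions from $U_1$ into $U_2$, so that symmetric reduction is unavailable, and your proposed replacement, that a single vertex $a\in U_1$ has $|N_G(\{a\})|\ge m$, is false in this regime: since $\deg_H(a,U_2)\ge (\tfrac{1}{2^{r-1}}+\gamma)p\binom{|U_2|}{r-1}$ and $p\binom{|U_2|}{r-1}$ is only of order $\ln^{\Theta(1)}n$ (for $|U_2|$ as small as $2n/\ln^c n$), a single vertex has only polylogarithmically many neighbours, far below $m$. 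Your alternative suggestion of applying property~\ref{psrand:i} to $T$ and $U_2\setminus N_G(T)$ also does not apply, because $|U_2\setminus N_G(T)|>|T|$ there.

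The fix is that the restriction to $|T|\le\lceil m/2\rceil$ is simply unnecessary here, precisely because of the extra slack $|U_2|\ge 2|U_1|$: for \emph{any} $T\subseteq U_1$ with $|N_G(T)|<|T|\le m\le|U_2|/2$ one has $\binom{|N_G(T)|}{r-1}<\binom{m}{r-1}\le 2^{1-r}\binom{2m}{r-1}\le 2^{1-r}\binom{|U_2|}{r-1}$, so property~\ref{psrand:i} (applicable since $|N_G(T)|\le|T|$) gives
\[
|T|\Bigl(\tfrac{1}{2^{r-1}}+\gamma\Bigr)p\binom{|U_2|}{r-1}\le e_H\Bigl(T,\binom{N_G(T)}{r-1}\Bigr)\le (1+\eps)2^{1-r}p|T|\binom{|U_2|}{r-1}+|T|\ln^{1+\eps}n,
\]
which is the same contradiction as before once $\eps\le 2^{r-3}\gamma$ and $\ln^{1+\eps}n\ll p\binom{|U_2|}{r-1}$. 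With this one-line change (checking Hall for all $T\subseteq U_1$ at once, rather than only small $T$ plus a large-$T$ patch), your argument becomes a clean proof, and it is what the paper's remark that the lemma follows ``almost verbatim'' from Lemma~\ref{lem:matching} is alluding to.
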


%

The next technical definition is very handy to describe the basic structures we will be interested in.
\begin{definition}\label{def:two_match}
Given an $r$-uniform hypergraph $H$ and two disjoint sets $A$ and $B$ with $|B|\geq 2|A|$. A $2$-matching for $(A,B)$ is 
a collection of pairs of edges $(e_a,f_a)_{a\in A}$ so that 
\begin{enumerate}[(i)]
\item all these edges are distinct, 
\item $a\in e_a$, $a\in f_a$ for all $a\in A$,
\item there is an injection $\tau\colon \cup_{a\in A}\{e_a,f_a\}\to B$ with $\tau(g)\in g$, and
\item for every edge $g\in  \cup_{a\in A}\{e_a,f_a\}$: $|g\cap A|=1$ and $|g\cap B|=r-1$.
\end{enumerate}
\end{definition}

The next lemma allows us to find a $2$-matching.
\begin{lemma}\label{lem:doubling}
For every integer $r\geq 3$, every real $\gamma \in(0,2^{1-r})$ and $c>2$ there exists an $\eps>0$ such that  the following holds for any $(\eps,p)$-pseudorandom $r$-uniform hypergraph $\cH$ on $n$ vertices with $p \geq \frac{\ln^{c r} n}{n^{r-1}}$ and $n$ sufficiently large with $\Delta_2(\mathcal{H})\leq 2\ln n$. Let $H \subseteq \mathcal H$  be a subgraph of $\cH$  and let $A$ and $B$ be  disjoint subsets of $V(H)$ with 
$|A|=m$ and $|B|\geq 4m$, where $m\ge \frac{n}{\ln^c n}$, and such that 
$\deg_{H}(a,B)\ge \left(\frac{1}{2^{r-1}}+2.5\gamma\right)p\binom{|B|}{r-1}$ for every $a\in A$. Then there exists a $2$-matching for $(A,B)$.
\end{lemma}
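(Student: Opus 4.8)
\textbf{Proof strategy for Lemma~\ref{lem:doubling}.}
The plan is to reduce the construction of a $2$-matching for $(A,B)$ to two successive applications of Lemma~\ref{lem:matching2}, using the sampling Lemma~\ref{lem:sampling} to split $B$ into two pieces each of which still sees every vertex of $A$ with almost the guaranteed degree. First I would choose $\eps$ small enough to satisfy the hypotheses of Lemmas~\ref{lem:matching2} and~\ref{lem:sampling} simultaneously (with room to spare, say $\eps \le 2^{r-3}\gamma$ and $1+\eps<c$), and pick the auxiliary constant $c'$ in Lemma~\ref{lem:sampling} large enough that the loss factor $(1-\ln^{-c'}n)$ is negligible compared with the slack $2.5\gamma$ versus $2\gamma$.

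The key steps are as follows. Apply Lemma~\ref{lem:sampling} to the host hypergraph with $V'=B$ and target size $|B|/2$: since $\deg_H(a,B)\ge(\tfrac{1}{2^{r-1}}+2.5\gamma)p\binom{|B|}{r-1}$ for every $a\in A$ and $\Delta_2(\cH)\le 2\ln n$, we obtain a partition $B=B_1\dcup B_2$ with $|B_1|=|B_2|=|B|/2\ge 2m$ such that every $a\in A$ satisfies $\deg_H(a,B_j)\ge(1-\ln^{-c'}n)(\tfrac{1}{2^{r-1}}+2.5\gamma)p\binom{|B_j|}{r-1}\ge(\tfrac{1}{2^{r-1}}+2\gamma)p\binom{|B_j|}{r-1}$ for $j=1,2$; here we also use that $m\ge n/\ln^c n$ and $p\ge \ln^{cr}n/n^{r-1}$ so the hypotheses on sizes in Lemma~\ref{lem:sampling} hold. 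Now $|B_j|\ge 2m=2|A|$, so Lemma~\ref{lem:matching2} applied to $(A,B_1)$ yields a $(A,B_1)$-matching $(e_a)_{a\in A}$ consisting only of $(1,r-1)$-edges for $(A,B_1)$, and applied to $(A,B_2)$ yields a $(A,B_2)$-matching $(f_a)_{a\in A}$ consisting only of $(1,r-1)$-edges for $(A,B_2)$. Since $B_1$ and $B_2$ are disjoint, the edges $e_a$ (each contained in $A\cup B_1$) and the edges $f_a$ (each contained in $A\cup B_2$) are pairwise distinct, and each satisfies $|g\cap A|=1$, $|g\cap B|=r-1$. The matching structure of Lemma~\ref{lem:matching2} provides injections $A\to B_1$, $a\mapsto b_a^{(1)}\in e_a$, and $A\to B_2$, $a\mapsto b_a^{(2)}\in f_a$; concatenating them gives an injection $\tau\colon\bigcup_{a\in A}\{e_a,f_a\}\to B$ with $\tau(g)\in g$, because $B_1\cap B_2=\varnothing$. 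Thus $(e_a,f_a)_{a\in A}$ is a $2$-matching for $(A,B)$, verifying all four conditions of Definition~\ref{def:two_match}.

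The only mildly delicate point, and the one I would be most careful about, is the bookkeeping of the degree slack through Lemma~\ref{lem:sampling}: we start with surplus $2.5\gamma$ over the bare $2^{1-r}$ threshold, lose a multiplicative $(1-\ln^{-c'}n)$ factor when passing to each $B_j$, and must still land at or above the surplus $\gamma$ (in fact $2\gamma$) required by Lemma~\ref{lem:matching2}. Since $p\binom{|B_j|}{r-1}=\Omega(\ln^{c}n)\to\infty$ while $\ln^{-c'}n\to 0$, the multiplicative loss is $o(1)\cdot p\binom{|B_j|}{r-1}$ and is absorbed by the $0.5\gamma$ gap; one just has to state this with an explicit choice of $c'$ and a "for $n$ sufficiently large". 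Everything else is immediate from the quoted lemmas, so there is no real obstacle — the work is entirely in lining up the constants $\eps$, $c$, $c'$ and the size conditions so that both invoked lemmas are applicable.
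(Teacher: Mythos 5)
Your proof is correct and follows exactly the same route as the paper's: split $B$ into two equal halves via Lemma~\ref{lem:sampling}, run Lemma~\ref{lem:matching2} once on each half, and combine the two $(1,r-1)$-matchings into a $2$-matching. The paper states this in a single sentence; your version merely spells out the degree-slack and size bookkeeping that the paper leaves implicit.
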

\begin{proof}
We apply Lemma~\ref{lem:sampling} to $B$ and obtain an equipartition into $B_1\cup B_2$ such that $\deg_{H}(a,B_i)\ge \left(\frac{1}{2^{r-1}}+2\gamma\right)p\binom{|B_i|}{r-1}$ for every $a\in A$ and $i\in[2]$. An application of Lemma~\ref{lem:matching2} to $(A,B_1)$ and $(A,B_2)$ yields the desired $2$-matching.
\end{proof}

\section{Absorbers}\label{sec:absorbers}
A \emph{weak Berge path} (or simply \emph{weak path}) is an alternating sequence $(v_1, e_1, v_2, \ldots, v_k)$ of distinct vertices $v_1, \ldots, v_k$ and (not necessarily distinct) hyperedges $e_1, \ldots, e_{k-1}$ such that $v_i, v_{i+1} \in e_i$ for every $i\in [k-1]$. A weak path is called \emph{Berge path} if all its hyperedges are distinct.

For a weak  path $P=(v_1, e_1, \ldots, e_{k-1}, v_k)$ we denote by $E(P):= \{e_1, \ldots, e_{k-1}\}$ the set of hyperedges of $P$, by $\vertices (P):= \{v_1, \ldots, v_k\}$ the set of \emph{inner} vertices in the sequence of $P$, and by $V(P) := \bigcup_{i\in [k-1]} e_i$ the union of the hyperedges of $P$.
We say that $P$ \emph{connects} $v_1$ to $v_k$ and call $v_1$ and $v_k$ \emph{endpoints} of $P$. For a weak or Berge cycle $C=(v_1, e_1, v_2, \ldots, v_k, e_k)$ we define $\vertices (C):= \{v_1, \ldots, v_k\}$ and we refer to $(v_1,\ldots,v_k)$ as a \emph{sequence} of $C$.

The \emph{length} of a weak path $P$ is defined as $|\vertices(P)|-1$, and
the \emph{length} of a weak cycle $C$ is defined as $|\vertices(C)|$. In particular, if $P$ is a Berge path, then the length of $P$ is exactly the number of hyperedges of $P$. 
 Given two weak paths $P=(v_1, e_1, \ldots, e_{k-1}, v_k)$ and $Q=(v_k, e'_1, \ldots, e'_{k'-1}, v'_{k'})$ with $|\vertices(P) \cap \vertices(Q)|= 1$, we denote by $P\cdot Q$ the weak path 
 $(v_1, e_1, \ldots, e_{k-1}, v_k, e'_1, \ldots, e'_{k'-1}, v'_{k'})$.

We say that two Berge paths $P= (v_1, e_1, \ldots, e_{k-1}, v_k)$ and $P'= (v'_1, e'_1, \ldots, e'_{k'-1}, v'_{k'})$ are \emph{edge-disjoint} if $e_i \neq e_j$ for all $i\in[k-1]$ and $j\in [k'-1]$. \medskip

Next we introduce the notion of an absorber. 
\begin{definition}[Absorber for a vertex $u$]\label{def:absorber}
Given a (uniform) hypergraph $H$ and a vertex $u$. An absorber for $u$ is a subgraph $A$ of $H$ which consists of the following edges specified in the properties below:
\begin{enumerate}[(i)]
\item $A$ contains a Berge cycle $C$ with $u\in V^*(C)$ of length $2t+1$ for some $t\in\NN$ and with vertex sequence
$(u,v_1,\ldots,v_{t+1},\ldots, v_{2t})$;
\item there are $t-1$  Berge paths $P_1$, \ldots, $P_{t-1}$ so that each path $P_i$ has endpoints $v_{i+1}$ and $v_{2t+1-i}$ and the inner vertex sets are pairwise disjoint; 
\label{def:absorber:paths}
\item the edge-sets $E(C)$, $E(P_1)$,\ldots, $E(P_{t-1})$ are pairwise disjoint;
\item $E(A)=E(C)\cup\bigcup_{i\in [t-1]}E(P_i)$.
\end{enumerate}
We call the vertex $u$ a \emph{reservoir vertex} and the absorber $A$ a \emph{$u$-absorber}. The \emph{inner vertices} of $A$ are the vertices from $V^*(C)\cup\bigcup_i V^*(P_i)$. The vertices $v_1$ and $v_{t+1}$ are referred to as the \emph{main endpoints} of $A$.
\end{definition}

The following proposition about an absorber for some vertex $u$ explains its extreme usefullness in what comes and also the role of the main endpoints of an absorber -- the absorber $A_u$ contains two Berge paths with the same endpoints and the  inner vertices of the first path consist of all inner vertices of the absorber, while the inner vertices of the second contain all inner vertices but $u$.
\begin{proposition}\label{prop:absorber}
Let $A$ be a $u$-absorber in some hypergraph $H$ and let $C$ and $P_1$,\ldots, $P_{t-1}$ be the Berge cycle and Berge paths of the absorber $A$ respectively. Let  $(u,v_1,\ldots,v_{t+1},\ldots, v_{2t})$ be the sequence of $C$ according to property (i) in Definition~\ref{def:absorber}. Then $A$ contains the following two Berge paths:
\begin{enumerate}[(a)]
\item a Berge path $P_u$ from $v_1$ to $v_{t+1}$ with $V^*(P_u)=V^*(C)\cup\bigcup_{i\in [t-1]}V^*(P_{i})$, and \label{eq:absorber:path}
\item a Berge path $P$ from $v_1$ to $v_{t+1}$ with $V^*(P)=(V^*(C)\setminus\{u\})\cup\bigcup_{i\in [t-1]}V^*(P_{i})$.
\end{enumerate}
\end{proposition}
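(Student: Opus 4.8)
The plan is to construct the two Berge paths explicitly by traversing the absorber in a zigzag fashion, using the cycle $C$ together with the paths $P_1,\ldots,P_{t-1}$ as ``rungs'' on a ladder. Recall the cycle has vertex sequence $(u,v_1,\ldots,v_t,v_{t+1},v_{t+2},\ldots,v_{2t})$ of odd length $2t+1$, so its edges connect consecutive vertices (cyclically), and each path $P_i$ joins $v_{i+1}$ to $v_{2t+1-i}$; all edge-sets $E(C),E(P_1),\ldots,E(P_{t-1})$ are pairwise disjoint and all the inner vertex sets of the $P_i$ are pairwise disjoint (from each other). The idea for the path $P_u$ of part~(a) is: start at $v_1$, walk along $C$ away from $u$ to $v_2$, then cross $P_1$ to $v_{2t}$, then use the cycle edge $v_{2t}u$ (or rather: walk along $C$ from $v_{2t}$ through $u$) and the edge $u v_1$ is \emph{not} reused --- instead we insert $u$ between $v_{2t}$ and $v_1$? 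No: we must be careful that $v_1$ is an endpoint, not an inner vertex visited twice.

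Let me restate the intended construction. For part~(b), the Berge path $P$ from $v_1$ to $v_{t+1}$ that omits $u$: traverse
\[
v_1 \;-\; v_2 \;\overset{P_1}{-}\; v_{2t}\;-\;v_3\;\overset{P_2}{-}\;v_{2t-1}\;-\;\cdots\;-\;v_t\;\overset{P_{t-1}}{-}\;v_{t+2}\;-\;v_{t+1},
\]
where each ``$-$'' with no label is a single edge of $C$ (namely $v_jv_{j+1}$ for the left half and $v_{2t+2-j}v_{2t+1-j}$ for the right half), and each ``$\overset{P_i}{-}$'' is the Berge path $P_i$ from $v_{i+1}$ to $v_{2t+1-i}$. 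One checks that every inner vertex of $C$ except $u$, together with all inner vertices of every $P_i$, appears exactly once, and that all hyperedges used are distinct (the cycle edges used are all distinct from one another, the $P_i$ are edge-disjoint from $C$ and from each other, and no cycle edge is used twice because we never traverse both $v_ju$-type edges). For part~(a), the Berge path $P_u$ that includes $u$: do the same zigzag but replace the central step so that $u$ gets inserted --- concretely, traverse from $v_1$ through $v_2,\overset{P_1}{-},v_{2t},u$ (using the two cycle edges $v_1v_2$ is wrong for including $u$ at the start). A cleaner description: use the cycle path $v_1 - u - v_{2t}$ (two consecutive cycle edges $v_1u$ and $uv_{2t}$) at the beginning, then $\overset{P_1}{-} v_2$? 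That breaks the monotonicity.

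The honest approach, which I would carry out carefully, is: for $P_u$ traverse
\[
v_1 \;-\; u \;-\; v_{2t} \;\overset{P_1}{-}\; v_2 \;-\; v_3 \;\overset{P_2}{-}\; v_{2t-1} \;-\; v_{2t-2}\;\overset{}{-}\;\cdots
\]
adjusting parities so that it ends at $v_{t+1}$; the two constructions differ only in whether the ``first rung'' routes through $u$ via the two cycle edges $v_1u,\,uv_{2t}$ or skips $u$ via the single cycle edge $v_1v_2$ combined with reusing $v_2$ as the landing point of $P_1$. I would verify by induction on $i$ that after processing $P_i$ we are at $v_{i+2}$ (say) having used inner vertices $\{v_1,\ldots,v_{i+1}\}\cup\{v_{2t},\ldots,v_{2t+1-i}\}$ plus all inner vertices of $P_1,\ldots,P_i$ and $u$ (in case~(a)) or not $u$ (in case~(b)), and the two halves meet in the middle at $v_{t+1}$ after step $i=t-1$. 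Edge-distinctness is immediate from Definition~\ref{def:absorber}: within $C$ we use a set of distinct cycle edges, the $P_i$'s are pairwise edge-disjoint Berge paths, and $E(C)$ is disjoint from each $E(P_i)$; so the concatenation is a genuine Berge path. The main obstacle --- really the only subtlety --- is bookkeeping the vertex indices and parities so that the zigzag terminates exactly at $v_{t+1}$ and covers precisely the claimed inner vertex sets, and checking that no cycle edge is accidentally traversed twice in either routing; once the indices are set up correctly (which is a finite check, cleanest to do by a small induction), both paths are exhibited and the proposition follows.
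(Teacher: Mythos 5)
Your approach is essentially the paper's: construct both Berge paths explicitly by zigzagging between the two halves of the cycle $C$ using the connecting paths $P_i$ as rungs, and deduce edge-distinctness directly from the disjointness of $E(C), E(P_1), \ldots, E(P_{t-1})$ guaranteed by Definition~\ref{def:absorber}. Your routing for $P_u$ is the one the paper uses. However, the path you display for part~(b),
\[
v_1 \;-\; v_2 \;\overset{P_1}{-}\; v_{2t}\;-\;v_3\;\overset{P_2}{-}\;v_{2t-1}\;-\;\cdots\;-\;v_t\;\overset{P_{t-1}}{-}\;v_{t+2}\;-\;v_{t+1},
\]
has an indexing error: the steps you label as cycle edges, such as $v_{2t}$ to $v_3$ or $v_{2t-1}$ to $v_4$, are not edges of $C$, since the cycle only joins cyclically consecutive vertices. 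After $P_1$ lands you on $v_{2t}$, you must take the cycle edge to $v_{2t-1}$ (not to $v_3$) and only then cross back via $P_2$, which has endpoints $v_{2t-1}$ and $v_3$. The correct zigzag for part~(b) is therefore $v_1,\,v_2,\overset{P_1}{-},v_{2t},\,v_{2t-1},\overset{P_2}{-},v_3,\,v_4,\overset{P_3}{-},v_{2t-2},\,v_{2t-3},\ldots,v_{t+2},\,v_{t+1}$, i.e.\ after each rung you take a single cycle step \emph{on the side you landed on} before crossing again. With this fix, $P$ uses the even-indexed cycle edges and $P_u$ uses the odd-indexed ones (for $t$ even), and the rest of your verification — inner-vertex coverage and edge-distinctness — goes through exactly as you describe and as in the paper. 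You did flag the indices as the one subtlety to nail down, which is right; this is where the correction lives.
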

\begin{proof}
We will construct  (weak) paths $P_u$ and $P$ as described above. Since the Berge paths $P_i$ and the cycle $C$ in the absorber use different edges, it will imply that these weak paths are indeed Berge.

Let the structure of the cycle $C$ be as follows: $C=(u,e_1,v_1,e_2,\ldots,e_{t+1},v_{t+1},\ldots, v_{2t}, e_{2t+1})$. 
W.l.o.g.\  assume $t$ is even (the case $t$ odd is very similar). We construct $P_u$ as follows: 
\[
P_u=(v_1,e_1,u,e_{2t+1},v_{2t})\cdot P_1 \cdot (v_2,e_3,v_3) \cdot P_2\cdot \ldots\cdot P_{t-1} \cdot (v_t,e_{t+1},v_{t+1}).
\] 
Then the path $P$ is defined as follows:
\[
P=(v_1,e_2,v_2) \cdot P_1 \cdot (v_{2t},e_{2t-1},v_{2t-1}) \cdot P_2\cdot \ldots\cdot P_{t-1} \cdot (v_{t+2},e_{t+1},v_{t+1}).
\] 
It is most instructive to draw a picture: placing the inner vertices consecutively on a cycle and connecting appropriate vertices with the paths, one sees immediately that there is one way to traverse all vertices and a `complementary' way to traverse all vertices except for $u$.
\end{proof}

\section{Connection lemma}\label{sec:connection}
In this section we will concentrate on a connection lemma that will allow us to put Berge paths together into a longer Berge path. 

\subsection{An expansion lemma}

The following lemma allows us to prove an expansion property for a pseudorandom hypergraph in a resilience setting between any two `random', not too small vertex subsets.

\begin{lemma}\label{lem:simply_expand}
For every integer $r\geq 3$, every real $\gamma \in(0,2^{1-r})$ and $c>1$ there exists an $\eps>0$ such that  the following holds for any $(\eps,p)$-pseudorandom $r$-uniform hypergraph $\cH$ with $p \geq \frac{\ln^{c r} n}{n^{r-1}}$. Let $H \subseteq \mathcal{H}$  be a subgraph of $\cH$  and let $U_1$, $U_2$ be disjoint subsets of $V(H)$ with $|U_1|=|U_2|=m\ge \frac{n}{\ln^c n}$ such that $\deg_{H}(u,U_2)\ge \left(\frac{1}{2^{r-1}}+2\gamma\right)p\binom{m}{r-1}$ for every $u\in U_1$. Then for every subset $T_1\subseteq U_1$ of cardinality at least $\gamma m$ there exists a subset 
$T_2\subseteq U_2$ of cardinality at least $(1/2+\gamma) m$ such that for every $b\in T_2$ there exists a $(1,r-1)$-edge  $e$ for $(T_1,T_2)$ with 
$b\in e$. 
\end{lemma}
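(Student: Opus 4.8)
The plan is to choose $\eps$ small (say $\eps \le \gamma 2^{2-r}/10$ and $1+\eps < c$), fix $T_1 \subseteq U_1$ with $|T_1| \ge \gamma m$, and let $T_2 \subseteq U_2$ be the set of all vertices $b \in U_2$ that lie in some $(1,r-1)$-edge for $(T_1,U_2)$ with one endpoint in $T_1$; equivalently $T_2 = N(T_1)$ in the notation of the matching lemmas, restricted to $(1,r-1)$-edges. By construction every $b \in T_2$ has the required property (the edge $e$ witnessing $b \in T_2$ is a $(1,r-1)$-edge for $(T_1,T_2)$ since $e \cap U_2 \subseteq T_2$ automatically, as all $r-1$ vertices of $e$ in $U_2$ are neighbours of the same vertex of $T_1$). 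So the only thing to prove is the lower bound $|T_2| \ge (1/2+\gamma)m$.

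First I would double-count $e_H\bigl(T_1, \binom{U_2}{r-1}\bigr)$, where I restrict attention to $(1,r-1)$-edges. By the minimum degree hypothesis, summing over $u \in T_1$,
\[
  e_H\Bigl(T_1, \tbinom{U_2}{r-1}\Bigr) \;\ge\; |T_1|\left(\frac{1}{2^{r-1}} + 2\gamma\right) p \binom{m}{r-1}.
\]
(Here one uses that $\deg_H(u,U_2)$ counts exactly the $(1,r-1)$-edges at $u$ meeting $U_2$; if one wants to be careful about edges counted with multiplicity across different $u \in T_1$, note the sum is still an upper bound on the number of such edges is irrelevant — we only need the lower bound on the count of edges, and each edge is counted at most $|e \cap T_1| \le r-1 \cdot$ hmm, better: restrict to the bipartite count $e_H(T_1, \binom{U_2}{r-1})$ which by definition counts edges with $|e \cap T_1| = 1$, so there is no overcount.) On the other hand, every such edge lies inside $T_1 \cup T_2$, so applying pseudorandomness property~\ref{psrand:i} with the pair $(T_1, T_2)$ — valid since $|T_2| \le m = |U_1| $ and... actually one must check $|T_2| \le |T_1|$ is \emph{not} guaranteed; instead apply~\ref{psrand:i} to $(T_1, T_2)$ only in the regime $|T_2| \le |T_1|$, and if $|T_2| > |T_1| \ge \gamma m$ we may already be close to done but not quite to $(1/2+\gamma)m$. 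To handle this cleanly I would instead argue by contradiction: suppose $|T_2| < (1/2+\gamma)m$. If additionally $|T_2| \le |T_1|$, property~\ref{psrand:i} gives
\[
  e_H\Bigl(T_1, \tbinom{U_2}{r-1}\Bigr) = e_H\Bigl(T_1, \tbinom{T_2}{r-1}\Bigr) \le (1+\eps)p|T_1|\binom{|T_2|}{r-1} + |T_1|\ln^{1+\eps} n,
\]
and if $|T_1| < |T_2| < (1/2+\gamma)m$ I can enlarge $T_1$ to a set $T_1'$ with $T_1 \subseteq T_1' \subseteq U_1$ and $|T_1'| = |T_2|$ (possible since $m \ge |T_2|$), only increasing the left-hand side of the degree bound while keeping the containment $e_H(T_1, \binom{U_2}{r-1}) \le e_H(T_1', \binom{T_2}{r-1})$; then property~\ref{psrand:i} applies with the balanced pair $(T_2, T_1')$. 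Either way we reach
\[
  \left(\frac{1}{2^{r-1}} + 2\gamma\right) p \binom{m}{r-1} \le (1+\eps)\,p\,\binom{(1/2+\gamma)m}{r-1} + \ln^{1+\eps} n.
\]

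Finally I would expand $\binom{(1/2+\gamma)m}{r-1} \le (1/2+\gamma)^{r-1}\binom{m}{r-1}(1+o(1))$ and note $(1/2+\gamma)^{r-1} \le 2^{1-r}(1+2^{r-1}\gamma \cdot 2 (r-1)) $ — more to the point, $(1/2+\gamma)^{r-1} \le 2^{1-r} + C_r\gamma$ for a constant $C_r$, hmm, but that does not immediately beat $2^{1-r}+2\gamma$. The right move is to pick the threshold more carefully: replace $(1/2+\gamma)$ by noting we want $(1+\eps)(1/2+\gamma)^{r-1} < 2^{1-r}+2\gamma$, which holds for $\gamma \in (0,2^{1-r})$ and $\eps$ small because $(1/2+\gamma)^{r-1} < (1/2)^{r-1} + (r-1)(1/2+\gamma)^{r-2}\gamma \le 2^{1-r} + (r-1)\gamma$, and... this still is not obviously $< 2^{1-r}+2\gamma$ for large $r$. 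The clean fix, and the step I expect to be the main obstacle, is to track constants so that the slack $2\gamma - \eps(\cdots)$ dominates the error terms $|T_1|\ln^{1+\eps}n$ and the binomial lower-order terms: divide through by $p\binom{m}{r-1}$, use $p\binom{m}{r-1} \ge \ln^c n / r^{r-1} \to \infty$ to kill the $\ln^{1+\eps}n / (p\binom{m}{r-1})$ term (since $1+\eps < c$), use $\binom{(1/2+\gamma)m}{r-1}/\binom{m}{r-1} \to (1/2+\gamma)^{r-1}$, and then the inequality reduces to $2^{1-r}+2\gamma \le (1+\eps)(1/2+\gamma)^{r-1} + o(1)$, which is \emph{false} for all sufficiently small $\eps$ and large $n$ provided $(1/2+\gamma)^{r-1} < 2^{1-r}+2\gamma$. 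This last numerical inequality is where care is needed; it holds because $(1/2+\gamma)^{r-1} = 2^{1-r}(1+2\gamma)^{r-1}$ and one checks $(1+2\gamma)^{r-1} < 1 + 2\gamma \cdot 2^{r-1}$ fails in general — so in fact the honest route is to redo the counting with a slightly stronger hypothesis buried in the constant, or to observe that the lemma is applied only with $T_2$ playing against the \emph{whole} $U_2$ and one should instead bound $|U_2 \setminus T_2|$: every vertex of $U_2 \setminus T_2$ sends \emph{no} $(1,r-1)$-edge to $T_1$, so $e_H(T_1, \binom{U_2}{r-1}) = e_H(T_1, \binom{U_2 \setminus (U_2\setminus T_2)}{r-1})$ and one bounds the "missing" contribution. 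I would therefore organise the final step as: assume $|U_2 \setminus T_2| > (1/2 - \gamma)m$, apply pseudorandomness to the pair $(T_1, T_2)$ with $|T_2| < (1/2+\gamma)m \le m$, and derive the contradiction via the inequality $\left(2^{1-r}+2\gamma\right) < (1+\eps)(1/2+\gamma)^{r-1} + o(1)$, choosing $\eps$ at the very start small enough (depending on $r,\gamma$) that this contradicts $(1/2+\gamma)^{r-1} < 2^{1-r} + 2\gamma$ — and I would insert a short lemma-internal computation verifying $(1+2\gamma)^{r-1} < 1 + 2^{r-1}\cdot 2\gamma$ for $\gamma < 2^{1-r}$, i.e. $2\gamma < 2^{1-r}$, via convexity/the binomial theorem with all cross terms absorbed, which closes the argument.
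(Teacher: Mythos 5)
Your overall strategy is the right one (define $T_2$ as the $(1,r-1)$-neighbourhood of $T_1$, double-count $e_H(T_1,\binom{U_2}{r-1})$, and reach a contradiction from $(1+\eps)(1/2+\gamma)^{r-1}<2^{1-r}+2\gamma$), and your worry about that last numerical inequality is misplaced: it does hold for $\gamma\in(0,2^{1-r})$, since $(1/2+\gamma)^{r-1}=2^{1-r}(1+2\gamma)^{r-1}$ and one checks $(1+2\gamma)^{r-1}<1+2^{r}\gamma$ in that range (note the small slip: $\gamma<2^{1-r}$ gives $2\gamma<2^{2-r}$, not $2\gamma<2^{1-r}$). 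The genuine gap is in how you handle the case $|T_2|>|T_1|$, and neither of your two workarounds closes it. Enlarging $T_1$ to $T_1'$ with $|T_1'|=|T_2|$ and applying property~\ref{psrand:i} to the balanced pair gives an upper bound proportional to $|T_1'|$, while your lower bound from the degree hypothesis is proportional only to $|T_1|=\gamma m$; after dividing through, the inequality you would need to violate becomes $\gamma(2^{1-r}+2\gamma)\le(1+\eps)(1/2+\gamma)^r+o(1)$, which is actually \emph{true} for $\gamma<2^{1-r}$ (e.g.\ for $r=3$, $\gamma=1/4$ one gets $3/16\le 27/64$), so no contradiction results. Your second suggestion, to "apply pseudorandomness to the pair $(T_1,T_2)$ with $|T_2|<(1/2+\gamma)m\le m$", is not legitimate either: property~\ref{psrand:i} requires $|T_2|\le|T_1|$, which is exactly what you have just abandoned, and you haven't verified the hypotheses of anything else.

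The step you are missing is that the definition of $(\eps,p)$-pseudorandomness has a \emph{second} clause, property~\ref{psrand:ii}, with no additive error term, built for exactly this lopsided regime where the "sending" set $T$ is smaller than the "receiving" set $U$: it requires $\eps|U|\le|T|\le|U|\le n/2$ and $|U|$ not too small. The paper takes $|T_1|=\gamma m$ without loss of generality, then (assuming $|T_2|<(1/2+\gamma)m$) \emph{arbitrarily extends $T_2$} to a set $T_2'\subseteq U_2$ of size exactly $(1/2+\gamma)m$, so that $|T_2'|$ is large enough for property~\ref{psrand:ii} and $\eps|T_2'|\le\gamma m=|T_1|\le|T_2'|$ holds once $\eps$ is small. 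Applying property~\ref{psrand:ii} to $(T_1,T_2')$ gives the clean bound $e_H(T_1,\binom{T_2'}{r-1})\le(1+\eps)p\,\gamma m\binom{(1/2+\gamma)m}{r-1}$, and because $T_2'$ has a fixed size, the factor $\gamma m$ now appears on both sides and cancels, so the contradiction reduces to exactly the numerical inequality you identified. So: extend $T_2$ rather than $T_1$, and invoke property~\ref{psrand:ii} rather than property~\ref{psrand:i}.
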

\begin{proof}
We choose $\eps>0$ such that $(1+\eps)(\tfrac{1}{2}+\gamma)^{r-1}<2^{1-r} +2\gamma$. 

Let an $(\eps,p)$-pseudorandom $r$-uniform hypergraph $\cH$ and a subhypergraph $H\subseteq \cH$ with sets $U_1$, $U_2$ as specified in the assumption of the lemma be given. 
Without loss of generality
let $T_1\subseteq U_1$ with $|T_1|=\gamma m$. We define $T_2:=\{b\colon \exists\, a\in T_1,\, e\in H\text{ with }a, b\in e, |e\cap U_2|=r-1\}$ and assume that $|T_2|<(1/2+\gamma)m$. First we arbitrarily extend $T_2$ to a subset $T_2'\subseteq U_2$ of size $(1/2+\gamma) m$.

Then compare the lower bound
$e_H(T_1,\binom{T_2}{r-1})
	=e_H(T_1,\binom{U_2}{r-1})
	\ge  |T_1|\left(\frac{1}{2^{r-1}}+2\gamma\right)p\binom{m}{r-1}$ 
with the upper bound on $e_H(T_1,\binom{T'_2}{r-1})$  which comes from condition~\ref{psrand:ii} of the definition of $(\eps,p)$-pseudorandomness (it is easily seen that $|T_2'|\ge \left(\frac{13 (r-1)!\ln n}{\eps^{3}p}\right)^{1/(r-1)}$ holds for $n$ large enough):
\begin{align*}
|T_1|\left(\frac{1}{2^{r-1}}+2\gamma\right)p\binom{m}{r-1}\le e_H \left(T_1,\binom{T_2}{r-1}\right)
&\le e_H \left(T_1,\binom{T'_2}{r-1}\right) \\
& \le  (1+\eps)p\gamma m\binom{(1/2+\gamma) m}{r-1}.
\end{align*}
Since $(1+\eps)p\gamma m\binom{(1/2+\gamma) m}{r-1}<(1+\eps)(1/2+\gamma)^{r-1}p\gamma m\binom{m}{r-1}$, we however obtain
a contradiction due to our choice of $\eps$. 
Thus, $T_2$ must contain more than $(1/2+\gamma)m$ vertices. 
\end{proof}

\subsection{A connection lemma}
Our goal is to prove a connection lemma, Lemma~\ref{lem:targeted}, which will allow us to connect a given collection of pairs of vertices by edge-disjoint Berge paths,  all of whose inner vertices but possibly endpoints are pairwise disjoint as well. The lemma is similar to the connection lemma from~\cite{ferber2014robust}. 

First we argue  that in a subgraph of a pseudorandom hypergraph there exists a Berge path between a sequence of disjoint, not so small sets, even after deleting a positive proportion of the vertices from each of the sets, if one has sufficiently high minimum vertex degree between every two `consecutive' sets.

\begin{lemma}\label{lem:robustly_connect}
For every integer $r\geq 3$, every real $\gamma \in(0,2^{1-r})$ and $c>1$ there exists an $\eps>0$ such that  the following holds for any $(\eps,p)$-pseudorandom $r$-uniform hypergraph $\cH$ on $n$ vertices with $p \geq \frac{\ln^{c r} n}{n^{r-1}}$ and $n$ sufficiently large. Let $H \subseteq \mathcal H$  be a subgraph of $\cH$  and let $U_1$, $U_2$, \ldots, $U_k$ (for some $k\ge \log_2 m$) be pairwise disjoint subsets of $V(H)$ each of cardinality $m\ge \frac{n}{\ln^c n}$ such that $\deg_{H}(u,U_{i+1})\ge \left(\frac{1}{2^{r-1}}+2\gamma\right)p\binom{m}{r-1}$ for every $u\in U_{i}$ and every $i\in[k-1]$. Then for every sequence $W_1,\ldots,W_k$ 
of subsets $W_i\subseteq U_i$, $i\in[k-1]$,  
such that $|W_1|=\gamma m$ and $|W_i|\ge (1-\gamma)m$ for $i\ge 2$, 
there exists a vertex $v_1\in W_1$ and a set $T_k\subseteq U_k$ of cardinality at least $(1/2+\gamma)m$ with the following property:
\begin{itemize}
\item for every $v_k\in T_k$ there is a Berge path $P=(v_1,e_1,\ldots,e_{k-1},v_k)$ with $v_i\in W_i$ for all $i\in [k-1]$ and such that  $|e_i\cap U_{i+1}|=r-1$ for all $i\in[k-1]$.
\end{itemize}

\end{lemma}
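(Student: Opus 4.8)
The plan is to build the Berge path greedily, level by level, while maintaining at each step a suitably large ``reachable set'' of candidate vertices, and to do this robustly enough that we can absorb both the shrinkage of the sets $W_i$ and the need for the path edges to be genuinely distinct. Concretely, I would first choose $\eps>0$ small enough so that Lemma~\ref{lem:simply_expand} applies with the same $\gamma$ (in fact one wants $(1+\eps)(\tfrac12+\gamma)^{r-1}<2^{1-r}+2\gamma$, exactly the choice made there). Starting from $T_1:=W_1$, which has size $\gamma m$, I apply Lemma~\ref{lem:simply_expand} to the pair $(U_1,U_2)$ with $T_1$ in the role of the first set, obtaining a set $\widehat T_2\subseteq U_2$ of size at least $(\tfrac12+\gamma)m$ such that every $b\in\widehat T_2$ is reachable from some $a\in T_1$ by a $(1,r-1)$-edge for $(T_1,\widehat T_2)$. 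Then I intersect with $W_2$: since $|W_2|\ge(1-\gamma)m$ and $|\widehat T_2|\ge(\tfrac12+\gamma)m$, the set $T_2:=\widehat T_2\cap W_2$ still has size at least $\tfrac12 m\ge\gamma m$, so the hypothesis of Lemma~\ref{lem:simply_expand} for the next step (``subset of cardinality at least $\gamma m$'') is met, and every vertex of $T_2$ remains reachable from $W_1$ via a path of the required form.

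Iterating this for $i=2,\dots,k-1$ produces nested candidate sets $T_i\subseteq W_i$ with $|T_i|\ge\tfrac12 m$ and, for every $v\in T_i$, a Berge-type path $(v_1,e_1,\dots,e_{i-1},v)$ with $v_j\in W_j$ and $|e_j\cap U_{j+1}|=r-1$ for all $j<i$; at the last step a final application of Lemma~\ref{lem:simply_expand} to $(U_{k-1},U_k)$ with the set $T_{k-1}$ gives $T_k\subseteq U_k$ of size at least $(\tfrac12+\gamma)m$ with the stated reachability property. The endpoint $v_1\in W_1$ is then recovered by following back the edge chosen at the first level. Since the sets $U_1,\dots,U_k$ are pairwise disjoint and each chosen edge $e_j$ is a $(1,r-1)$-edge for $(T_j,T_{j+1})$, distinct edges lie in distinct ``layers'' $U_j\cup U_{j+1}$ for different $j$ up to parity, but more carefully: any two path edges $e_j,e_{j'}$ with $j<j'$ satisfy $e_j\subseteq U_j\cup U_{j+1}$ and $e_{j'}\subseteq U_{j'}\cup U_{j'+1}$, and as soon as $j'\ge j+2$ these vertex sets are disjoint; for $j'=j+1$ one has $e_j\cap U_{j+2}=\varnothing\ne e_{j+1}\cap U_{j+2}$, so $e_j\ne e_{j+1}$. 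Hence all $k-1$ edges are distinct and the path is a genuine Berge path.

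The one point that needs a little care — and which I expect to be the main technical obstacle — is the bookkeeping ensuring that the candidate sets never drop below $\gamma m$ after intersecting with the (possibly $\gamma$-deficient) sets $W_i$, so that Lemma~\ref{lem:simply_expand} can be reapplied; here the inequality $(\tfrac12+\gamma)-\gamma=\tfrac12\ge\gamma$ (valid since $\gamma<2^{1-r}\le\tfrac14$ for $r\ge3$) is exactly what makes the induction close, and it is worth stating explicitly that the reachability is ``pointwise'', i.e.\ attached to each individual vertex of $T_i$ rather than to the set, so that restricting to $T_i\cap W_i$ loses nothing. The condition $k\ge\log_2 m$ plays no role in this argument (it is only relevant later, when such paths are assembled into long paths), so I would simply not use it here. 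Finally, the fact that the union bound over all the sets and the pseudorandomness events is already subsumed in the hypothesis ``$\cH$ is $(\eps,p)$-pseudorandom'' means no further probabilistic estimate is needed: the lemma is entirely deterministic given pseudorandomness.
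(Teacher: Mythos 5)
Your plan proves a weaker statement than the lemma actually asserts, and the gap is precisely at the point you dismissed. The lemma requires a \emph{single} vertex $v_1\in W_1$ from which every vertex of a large set $T_k\subseteq U_k$ is reachable by a Berge path of the prescribed form. Your iteration, starting from $T_1:=W_1$ and applying Lemma~\ref{lem:simply_expand} at each level, produces at the end a set $T_k$ of size at least $(\tfrac12+\gamma)m$ such that every $v_k\in T_k$ is reachable from \emph{some} $v_1\in W_1$; but different targets $v_k$ may trace back to different starting vertices in $W_1$, and ``recovering $v_1$ by following back the edge chosen at the first level'' does not fix a common $v_1$. Lemma~\ref{lem:simply_expand} only certifies, for each $b\in T_2$, the existence of \emph{an} edge into $T_1$, with no control over which vertex of $T_1$ it hits, so the reachability relation you propagate is of the form $\forall v_k\,\exists v_1$, not $\exists v_1\,\forall v_k$.

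This is why the hypothesis $k\ge\log_2 m$ is in fact essential here, contrary to your remark. The paper's proof maintains, at step $j$, a shrinking candidate set of starting vertices $T_1\subseteq W_1$ of size $\max\{1,\gamma m/2^{j-1}\}$ alongside the large reachable set $T_j$: it splits $T_1$ into two halves, observes that (since $|T_{j-1}\cap W_{j-1}|\ge m/2$) at least $\gamma m$ vertices of $T_{j-1}\cap W_{j-1}$ are reachable from one of the two halves, keeps that half, and extends via Lemma~\ref{lem:simply_expand}. After roughly $\log_2(\gamma m)\le k$ steps the starting set has been pigeonholed down to a single vertex while the terminal set $T_k$ is still of size at least $(\tfrac12+\gamma)m$. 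Your argument lacks exactly this halving/pigeonhole mechanism, and adding it would reinstate the need for $k\ge\log_2 m$. The rest of your write-up (choice of $\eps$, intersection with $W_i$, the observation $(\tfrac12+\gamma)-\gamma\ge\gamma$, distinctness of edges from disjointness of the layers) is fine and matches the paper's supporting reasoning.
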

\begin{proof}
We choose $\eps>0$ so that Lemma~\ref{lem:simply_expand} is applicable on input $r$, $\gamma$ and $c$.

First we show the following statement for all $j=2,\ldots,k$:
\begin{itemize}
\item[($\star$)] there exists a subset $T_1\subseteq W_1$ of cardinality $\max\{1,\gamma m/2^{j-1}\}$ and 
a subset $T_j\subseteq U_j$ of cardinality at least $(1/2+\gamma) m$ such that for every $v_j\in T_j$ there is a Berge path $P=(v_1,e_1,\ldots,e_{j-1},v_j)$ with $v_1\in T_1$, $v_i\in W_i$ for all $i\in [j-1]$ and such that  $|e_i\cap U_{i+1}|=r-1$ for all $i\in[j-1]$.
\end{itemize}

The case $j=2$ follows from Lemma~\ref{lem:simply_expand} (all of whose assumptions are met): the Berge paths correspond to single edges with appropriate endpoints. 

Let now $j>2$ and proceed by induction. 
Then, we use the truth of statement ($\star$)
for $j-1$ and let $T_{1}$ and $T_{j-1}$ be the corresponding sets. We have $|T_{j-1}\cap W_{j-1}|\ge m/2$. 
We partition $T_1$ into two equal-sized sets $T_1'$ and $T_1''$ (if $|T_1|$=1 then we set $T_1':=T_1'':=T_1$) and, by the ($\star$)-property above, every vertex from $T_{j-1}\cap W_{j-1}$ is an endpoint of some Berge path  starting in one of the sets $T_1'$, $T_1''$. Therefore, we find a subset $T'_{j-1}\subseteq T_{j-1}\cap W_{j-1}$ with $|T'_{j-1}|\ge |T_{j-1}\cap W_{j-1}|/2\ge \gamma m$ and $T\in\{T_1',T_1''\}$ such that for every $v_{j-1}\in T'_{j-1}$ there is a Berge path $P=(v_1,e_1,\ldots,e_{j-2},v_{j-1})$ with  $v_i\in W_i$ for all $i\in [j-2]$ such that  $|e_i\cap U_{i+1}|=r-1$ for all $i\in[j-2]$ and, additionally, the vertex $v_1$ is  from $T$. Again, an application of Lemma~\ref{lem:simply_expand} allows us to extend these Berge paths such that the subset $T_j\subseteq U_j$ of all possible endpoints has cardinality at least $(1/2+\gamma)m$. We also have $|T_1'|=\max\{1,|T_1|/2\}$ and thus: $|T_1'|=\max\{1,\gamma m/2^{j-1}\}$. 

Since $\log_2(\gamma m)\le \log_2m-r+1\le k$ the statement of the lemma follows. 
\end{proof}

We can now apply the above lemma iteratively, obtaining $\gamma m$ edge-disjoint Berge paths. The following corollary summarizes it in a `symmetric' version.

\begin{corollary}\label{cor:set_of_paths}
For every integer $r\geq 3$, every real $\gamma \in(0,2^{1-r})$ and $c>1$ there exists an $\eps>0$ such that  the following holds for any $(\eps,p)$-pseudorandom $r$-uniform hypergraph $\cH$ on $n$ vertices with $p \geq \frac{\ln^{c r} n}{n^{r-1}}$ and $n$ sufficiently large. Let $H=(V,E) \subseteq \mathcal H$  be a subgraph of $\cH$. Let 
 $U_1$, $U_2$, \ldots, $U_{t}$, \ldots, $U_{t+t'}$ (for some $t,t'\ge \log_2 m$) be pairwise disjoint subsets of $V(H)$ each of cardinality $m\ge \frac{n}{\ln^c n}$. Further let $A$ and $B$ be two (not necessarily disjoint) sets of cardinality at least $3\gamma m$ each, disjoint from the other sets $U_i$. Assume moreover that the following conditions on the vertex degrees are satisfied
\begin{enumerate}[(i)]
\item $\deg_{H}(u,U_1)\ge \left(\frac{1}{2^{r-1}}+2\gamma\right)p\binom{m}{r-1}$ for every $u\in A$,
\item $\deg_{H}(u,U_{t+t'})\ge \left(\frac{1}{2^{r-1}}+2\gamma\right)p\binom{m}{r-1}$ for every $u\in B$,
\item $\deg_{H}(u,U_{i+1})\ge \left(\frac{1}{2^{r-1}}+2\gamma\right)p\binom{m}{r-1}$ for every $u\in U_{i}$ and every $i\in[t]$,
\item $\deg_{H}(u,U_{t+t'-i})\ge \left(\frac{1}{2^{r-1}}+2\gamma\right)p\binom{m}{r-1}$ for every $u\in U_{t+t'+1-i}$ and every $i\in[t']$,
\end{enumerate}
Then for any ordering of $A$ as $a_1$, $a_2$, \ldots and $B$ as $b_1$, $b_2$,\ldots, 
there exists a system of pairwise edge-disjoint Berge paths $P_1$, \ldots, $P_{\gamma m}$ such that 
\begin{enumerate}[(a)]
\item there exists a $(\gamma m)$-set $I=\{i_1,\ldots, i_{\gamma m}\}$ so that 
the endpoints of $P_j$ are $a_{i_j}$ and $b_{i_j}$ for every $j\in[\gamma m]$ and the edges lie completely within $\left(\cup_s U_s\right)\cup\{a_{i_j},b_{i_j}\}$,\label{item:Berge_P_i}
\item every vertex in $\left(\cup_s U_s\right)$ is an inner vertex of at most one
of the paths $P_j$,
\item every path $P_j$ has exactly one inner vertex from each of the sets $A$, $B$, $U_1$, \ldots, $U_{t+t'}$.\label{item:Berge_P_i_length}
\end{enumerate}
\end{corollary}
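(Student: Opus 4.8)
The plan is to bootstrap Corollary~\ref{cor:set_of_paths} from Lemma~\ref{lem:robustly_connect} by extracting the $\gamma m$ Berge paths one at a time, each time deleting the (constantly many) inner vertices already used so that the degree hypotheses remain intact for the next round. Concretely, I would proceed by induction on $j$, maintaining after step $j$ a family $P_1,\dots,P_j$ of pairwise edge-disjoint Berge paths satisfying (a)--(c), together with the invariant that for each $i\in[t+t']$ at most $j$ vertices of $U_i$ have been used as inner vertices (one per path) and similarly at most $j$ vertices of $A$ and of $B$. Since we only ever run for $j\le \gamma m$ rounds, after any round the sets $U_i':=U_i\setminus(\text{used inner vertices})$ still have size at least $(1-\gamma)m$, and the leftover sets $A':=A\setminus\{a_{i_1},\dots\}$, $B':=B\setminus\{b_{i_1},\dots\}$ still have size at least $3\gamma m-\gamma m=2\gamma m\ge \gamma m$.

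For the inductive step I would set up a single application of Lemma~\ref{lem:robustly_connect} on the chain of sets $A'\to U_1'\to\cdots\to U_t'$ and, reading the chain backwards, $B'\to U_{t+t'}'\to\cdots\to U_{t+1}'$, joining them in the middle; this requires a small massaging of Lemma~\ref{lem:robustly_connect} so that the first set of the chain is allowed to be an auxiliary set ($A'$ resp.\ $B'$) rather than one of the $U_i$, but the degree conditions (i)--(iv) of the corollary are precisely what is needed for that, and the degrees only drop by a lower-order amount $O(\ln n\cdot j)\le O(\gamma m\ln n)=o(p\binom{m}{r-1})$ after deleting $j\le\gamma m$ vertices (using $\Delta_2\le 2\ln n$, which we may assume by Proposition~\ref{prop:codegree}, or rather is implied by $(\eps,p)$-pseudorandomness together with the codegree bound — this is where I would be slightly careful about what hypotheses are available). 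Lemma~\ref{lem:robustly_connect} applied to the forward chain produces a vertex $a\in A'$ (which I relabel $a_{i_{j+1}}$) and a large set $T_t\subseteq U_t'$ of reachable endpoints; applied to the backward chain it produces $b\in B'$ and a large set $T_{t+1}\subseteq U_{t+1}'$; since $|T_t|,|T_{t+1}|\ge(1/2+\gamma)m$ and the sets are disjoint, the $(\eps,p)$-pseudorandomness (condition~\ref{psrand:ii}, exactly as used in Lemma~\ref{lem:simply_expand}) guarantees an $(1,r-1)$- or $(r-1,1)$-edge — in fact here just one edge meeting both $T_t$ and $T_{t+1}$ — bridging the two halves into a single Berge path $P_{j+1}$ of the required shape; finally one checks $E(P_{j+1})$ is disjoint from $\bigcup_{s\le j}E(P_s)$, which holds because every edge of $P_{j+1}$ contains an inner vertex from some $U_i'$ that no earlier path touched.

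The constant $t,t'\ge\log_2 m$ is needed exactly so that Lemma~\ref{lem:robustly_connect}'s requirement $k\ge\log_2 m$ is met for each half-chain, and the requirement $|A|,|B|\ge 3\gamma m$ is what survives the $\gamma m$ deletions with room to spare. I would then assemble the induction: run $\gamma m$ rounds, at the end output $P_1,\dots,P_{\gamma m}$ and the index set $I=\{i_1,\dots,i_{\gamma m}\}$, and verify (a)--(c) from the maintained invariants.

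The main obstacle I anticipate is bookkeeping rather than a genuinely new idea: one must be scrupulous that after deleting up to $\gamma m$ inner vertices from each $U_i$ the degree bound $\deg_H(u,U_{i+1}')\ge(\tfrac{1}{2^{r-1}}+2\gamma)p\binom{m}{r-1}$ is not what is literally needed — rather one needs it with $|U_{i+1}'|$ in the binomial, and one must absorb the discrepancy $\binom{m}{r-1}-\binom{(1-\gamma)m}{r-1}$ by starting from a slightly larger degree surplus (this is why the hypotheses are stated with surplus $2\gamma$ while downstream lemmas only need $\gamma$, and one chooses $\eps$ and the various $\gamma$-sized slacks consistently). A secondary subtlety is that Lemma~\ref{lem:robustly_connect} as stated takes the whole chain to consist of $U_i$'s of size exactly $m$; here the first element $A'$ may be larger and the $U_i'$ slightly smaller, so I would either restate Lemma~\ref{lem:robustly_connect} in the mildly more general form actually proved (its proof only uses the degree conditions and sizes $\ge(1-\gamma)m$, never exact equality) or shrink everything to a common size $(1-\gamma)m$ before applying it. Neither issue is deep, but getting the quantifier order and the choice of $\eps$ right across Lemmas~\ref{lem:simply_expand}, \ref{lem:robustly_connect}, and the corollary is where the care goes.
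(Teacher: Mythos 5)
Your overall strategy matches the paper's: iteratively apply Lemma~\ref{lem:robustly_connect}, one Berge path per round, keeping the sets $W_i=U_i\setminus D_i$ large since only one inner vertex per $U_i$ per path is ever consumed. However, there is one genuine gap and two spots where your bookkeeping worries are aimed at problems that do not actually arise.

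The genuine gap is the matching requirement. Property~(a) forces the $j$-th path to run between $a_{i_j}$ and $b_{i_j}$ \emph{with the same index} $i_j$; it is not enough to produce a path between some $a\in A'$ and some $b\in B'$. In your inductive step you take whatever vertex $a$ the forward application of Lemma~\ref{lem:robustly_connect} hands you and declare $i_{j+1}$ to be its index, but the backward application independently produces some $b\in B'$ with no control over its index, so in general $b\neq b_{i_{j+1}}$. The paper's proof closes this hole with a small but essential counting argument: because Lemma~\ref{lem:robustly_connect} succeeds starting from \emph{any} $\gamma m$-subset of $A$ (resp.\ of $B$), fewer than $\gamma m$ vertices of $A$ (resp.\ $B$) can fail to serve as the starting vertex of a suitable path. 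Since $|A|,|B|\ge 3\gamma m$ and fewer than $\gamma m$ indices are already used, there is always an unused index $j$ for which \emph{both} $a_j$ starts a forward path and $b_j$ starts a backward path; one then applies Lemma~\ref{lem:robustly_connect} with $W_1$ chosen to contain $a_j$ (resp.\ $b_j$). This is the step your sketch omits.

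The remaining issues are minor but worth flagging. First, your worry about the degree surplus decaying from $\binom{m}{r-1}$ to $\binom{(1-\gamma)m}{r-1}$ after deletions, and the appeal to $\Delta_2\le 2\ln n$, are both unnecessary: Lemma~\ref{lem:robustly_connect} is already stated with a fixed ambient family $U_1,\dots,U_k$ carrying the degree condition and a shrinking family $W_i\subseteq U_i$ with $|W_i|\ge(1-\gamma)m$ carrying the `allowed vertices'. The degree hypothesis always refers to the full $U_i$ and is therefore preserved verbatim across rounds; no lower-order error term and no codegree bound are needed, nor is any restatement of the lemma. Second, in the middle you send the forward chain to $U_t$ and the backward chain to $U_{t+1}$ and then look for a bridging edge; the paper instead lets the backward chain run one step further (through $t'+1$ sets) so that both chains land in $U_t$, and picks a common vertex $v_t\in T_t\cap T_t'$, a set of size $\ge 2\gamma m$. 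Your bridging variant is salvageable via Lemma~\ref{lem:simply_expand} (not via pseudorandomness alone, which gives only upper bounds on edge counts), but note it would produce paths with two inner vertices in $\{U_t,U_{t+1}\}$ total while the paper's version has exactly one inner vertex in each $U_i$, and the latter is the version needed to verify~(c) as literally stated.
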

\begin{proof}
We choose $\eps>0$ so that Lemma~\ref{lem:robustly_connect} is applicable on input $r$, $\gamma$ and $c$. We will apply now Lemma~\ref{lem:robustly_connect} to find at least $\gamma m$ Berge paths $P_i$ as described in~\ref{item:Berge_P_i}--\ref{item:Berge_P_i_length}. 

Assume that we already found some paths 
$P_{1}$,\ldots, $P_{s}$ ($s<\gamma m$), and let $D_1$, \ldots, $D_{t+t'}$ be the sets of inner vertices of theses paths contained in the sets $U_1$, \ldots, $U_{t+t'}$. To construct a new Berge path, we apply Lemma~\ref{lem:robustly_connect} twice: to some $\gamma m$-subset $W_0$ of $A\setminus \{a_{i_j}\colon j\in[s]\}$ and 
the sets $W_i:=U_i\setminus D_i$ ($i\in [t]$) and to some $\gamma m$-subset $W_{t+t'+1}$ of $B\setminus \{b_{i_j}\colon j\in[s]\}$ and the sets $W'_i:=U_{t+t'-i+1}\setminus D_{t+t'-i+1}$ ($i\in [t'+1]$). This yields vertices $v_0\in W_0$ and $v_{t+t'+1}\in W_{t+t'+1}$ and sets $T_t\subseteq U_t$ and $T_{t}'\subseteq U_t$, each of size at least $(1/2+\gamma) m$ so that 
for every $v_t\in T_t$ there exists a Berge path that starts in $v_0$ and ends in $v_t$ and the inner vertices of which avoid already used vertices from the sets $D_i$, and such that for every $v_t\in T_t'$ there exists a Berge path with similar properties ending in $v_{t+t'+1}$. 
We may assume that $v_0=a_j$ and $v_{t+t'+1}=b_j$ for some $j$, not previously used, since we can apply Lemma~\ref{lem:simply_expand} 
to any $\gamma m$-subsets $W_0,W_{t+t'+1}$ of $A$ and $B$ 
and thus 
less than $\gamma m$ vertices of $A$ and of $B$ will fail to serve as a `starting vertex'.

Since $|T_t\cap T_t'|\geq 2\gamma m$
and as we used less than $\gamma m$ vertices from each of these sets for the inner vertices of the Berge paths $P_1$,\ldots, $P_s$, we finally obtain a Berge path
$P_j$ connecting $a_j$ and $b_j$ as required
for the properties (a)-(c).
Iterating  this procedure yields the desired system of $\gamma m$ Berge paths.
\end{proof}

Next we show how iterating the above corollary will allow us to connect $a_i$ with $b_i$ for every $i$.

\begin{lemma}[Connecting lemma]\label{lem:targeted}
For every integer $r\geq 3$, every real $\gamma \in(0,2^{1-r})$ and $c>7$ there exists an $\eps>0$ such that  the following holds for any $(\eps,p)$-pseudorandom $r$-uniform hypergraph $\cH$ on $n$ vertices with $p \geq \frac{\ln^{c r} n}{n^{r-1}}$, $\Delta_2({\mathcal H})\le 2\ln n$ and $n$ sufficiently large. Let $H=(V,E) \subseteq \mathcal H$ be a subgraph of $\cH$, let $A=\{a_1,\ldots,a_m\}$, $B=\{b_1,\ldots, b_m\}$ and $U$ be subsets of $V$ with 
\begin{enumerate}[(i)]
\item $(A\cup B)\cap U=\emptyset$,
\item $m\ge \frac{2n}{\ln^c n}$,
\item $|U|\ge \tfrac{3}{\gamma} m\log^2_2m$,
\item $\deg_{H}(u,U)\ge \left(\frac{1}{2^{r-1}}+3\gamma\right)p\binom{|U|}{r-1}$ for every $u\in A\cup B\cup U$,
\end{enumerate}
Then there exists a system of pairwise edge-disjoint Berge paths $P_1$, \ldots, $P_m$ such that 
\begin{enumerate}[(a)]
\item the endpoints of $P_i$ are $a_i$ and $b_i$ for every $i\in[m]$ and the edges lie completely within $U\cup\{a_i,b_i\}$ (if $a_i=b_i$ we abuse notation still calling $P_i$ a Berge path, although it is actually a Berge cycle), 
\label{prop1:connection}
\item every vertex in $U$ is an inner vertex of at most one
of the paths $P_i$,
\label{prop2:connection}
\item the length of each Berge path $P_i$ is between $2\log_2m+1$ and $4\log_2m+2$, and all lengths may be chosen to be even or odd at the same time.
\end{enumerate}
\end{lemma}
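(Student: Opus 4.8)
The plan is to prove the lemma by iterating Corollary~\ref{cor:set_of_paths}. First I would fix $\eps>0$ small enough that Lemma~\ref{lem:sampling}, Lemma~\ref{lem:robustly_connect} and Corollary~\ref{cor:set_of_paths} are all applicable for the given $r,\gamma,c$, and set $L:=\lceil\log_2|U|\rceil$. Note $L\ge\log_2 m$ (as $|U|>m$ by hypothesis (iii)), while $L\le\log_2 n+1\le(2+o(1))\log_2 m$ by hypothesis (ii); hence a Berge path that runs through $2L$ consecutive auxiliary vertex sets, each of size at most $|U|$ (so that the "$k\ge\log_2(\text{size})$"-conditions of Lemma~\ref{lem:robustly_connect} and Corollary~\ref{cor:set_of_paths} are met), has length $2L+1\in[\,2\log_2 m+1,\ 4\log_2 m+2\,]$ for $n$ large. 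The parity of this length can be fixed at will by letting the two halves of a path traverse $L$ and $L$, respectively $L$ and $L+1$, auxiliary sets; so it suffices to produce all of $P_1,\dots,P_m$ with one prescribed parity.

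Next I would carve out of $U$, by repeated application of Lemma~\ref{lem:sampling} (only $\polylog n$ extractions will be made, and $c>7$ is large enough that the accumulated multiplicative loss stays $1-o(1)$), a collection of pairwise disjoint \emph{layers}, with the property that every $u\in A\cup B\cup U$ still satisfies $\deg_H(u,W)\ge(\tfrac{1}{2^{r-1}}+2\gamma)p\binom{|W|}{r-1}$ for each layer $W$ and for the unused remainder of $U$. I then connect the pairs in rounds. In round $j$ I apply Corollary~\ref{cor:set_of_paths} with $t=t'=L$ hitherto unused layers and with $A,B$ set to the still-unconnected $a_i$'s and $b_i$'s, taking the common layer size $m'=\lfloor|A|/(3\gamma)\rfloor$; this connects $\gamma m'\approx|A|/3$ of the remaining pairs by pairwise edge-disjoint Berge paths of length $2L+1$ whose inner vertices in $U$ all lie inside the fresh round-$j$ layers (if $|A|<3\gamma\lceil n/\ln^c n\rceil$ the round is skipped, since then $m'$ would be too small). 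Deleting the connected pairs and iterating, the number of unconnected pairs shrinks by the factor $\tfrac23$ each round, so after $O(\log\log n)$ rounds at most $q:=O(n/\ln^c n)$ pairs remain. (When $a_i=b_i$, Corollary~\ref{cor:set_of_paths} produces a closed Berge walk through $a_i$, i.e.\ a Berge cycle, in place of a path, as the statement anticipates.)

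To finish the remaining $q=O(n/\ln^c n)$ pairs — too few for Corollary~\ref{cor:set_of_paths}, whose requirement $|A|\ge 3\gamma\cdot(\text{layer size})$ now fails — I would use the still-abundant unused part $U'$ of $U$ (of size $(1-o(1))|U|$, still satisfying the degree condition towards itself) and process the $q$ pairs one at a time. For $(a_i,b_i)$ I would carve $2L$ fresh layers out of $U'$ of size $m''=\Theta(q/\gamma)$ (reused for all $q$ pairs), route a Berge path from $a_i$ through the first $L$ and a Berge path from $b_i$ through the last $L$ by the expansion mechanism behind Lemma~\ref{lem:robustly_connect} so that each reaches a subset of size $\ge(\tfrac12+\gamma)m''$ of the common middle layer, and glue the two at a common middle vertex not yet used — possible since those two subsets intersect. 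Since each layer loses only a bounded number of vertices per pair, all the degree hypotheses survive throughout; excluding from the candidate sets the vertices already lying on earlier tail-paths keeps the new path edge-disjoint from them. (Equivalently, the tail can be absorbed by $O(\log n)$ further calls to Corollary~\ref{cor:set_of_paths} with $A,B$ padded by dummy vertices of $U'$, discarding the dummy paths.) Finally I would check the two disjointness requirements: within a round they are part of the conclusion of Corollary~\ref{cor:set_of_paths}, and across rounds the layers are disjoint, so two paths whose $U$-inner-vertices lie in disjoint layers can share neither a $U$-vertex as inner vertex nor a hyperedge, because a hyperedge has $r\ge 3$ vertices and at most two of them (the common endpoints in $A\cup B$) can lie outside the union of the two disjoint layer-families. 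Summing layer sizes — $2L=\Theta(\log m)$ layers per round with sizes summing to $O(m/\gamma)$ over all rounds, plus $O(q/\gamma)$ for the tail — the total portion of $U$ consumed is $O\!\bigl(\tfrac{m\log_2 m}{\gamma}\bigr)+O\!\bigl(\tfrac{n\log_2 n}{\gamma\ln^c n}\bigr)$, which is at most $|U|$ by hypothesis (iii).

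I expect the technical heart to be the tail phase together with this global accounting: Corollary~\ref{cor:set_of_paths} structurally cannot connect the last $\Theta(n/\ln^c n)$ pairs, and since that number lies below the threshold $2n/\ln^c n$ of hypothesis (ii) one cannot just recurse, so these pairs must be connected individually while still obeying the length window and its chosen parity and, above all, while the union of all auxiliary layers used across both phases stays within the bound $|U|\ge\tfrac{3}{\gamma}m\log_2^2 m$ — whose extra $\log_2 m$ factor over the naive cost $\tfrac{1}{\gamma}m\log_2 m$ is precisely what leaves room for the cleanup.
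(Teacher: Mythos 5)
Your main-phase iteration of Corollary~\ref{cor:set_of_paths} with shrinking layer size $m'=\lfloor|A|/(3\gamma)\rfloor$ is sensible, but it genuinely cannot reach the end: once fewer than $3\gamma n/\ln^c n$ pairs remain, the side condition $|A|\ge 3\gamma m'$ of Corollary~\ref{cor:set_of_paths} together with the floor $m'\ge n/\ln^c n$ (needed for pseudorandomness and for Lemma~\ref{lem:sampling}) can no longer be met, so the tail phase carries all the weight — and that is exactly where the argument has a gap. The manual gluing you propose requires routing a Berge path from a \emph{prescribed} vertex $a_i$ through $L$ layers; but Lemma~\ref{lem:robustly_connect} (and the expansion Lemma~\ref{lem:simply_expand} behind it) starts from a set $W_1$ of size $\gamma m$ and only returns \emph{some} $v_1\in W_1$ — it does not let you choose the starting vertex, and the funneling argument inside it cannot be initiated from a single vertex since Lemma~\ref{lem:simply_expand} explicitly demands $|T_1|\ge\gamma m$. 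Your parenthetical alternative (pad $A,B$ with dummies and re-apply Corollary~\ref{cor:set_of_paths}) has the same defect from the other side: the corollary only produces \emph{some} $(\gamma m)$-set of connected indices, not a set of your choosing, so a sequence of $O(\log n)$ such calls can in principle keep connecting only dummy pairs and never reduce $q$.

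The paper sidesteps this entirely by never letting the working set of endpoints shrink below $m$. It keeps the layer size fixed at $m$ and runs exactly $\log_2 m$ rounds, connecting $m/2^{i}$ of the remaining pairs in round $i$. The key device is the $2$-matching (Definition~\ref{def:two_match}, Lemma~\ref{lem:doubling}): before each round it replaces each remaining $a_i$ by two fresh representatives $\tau_A(e_{a_i}),\tau_A(f_{a_i})$ in a sampled $2m$-set $A_1$, and likewise for $B$. This restores $|A_1|=|B_1|=m$ worth of "live" endpoints even though only $m/2^{i-1}$ original pairs remain, so Corollary~\ref{cor:set_of_paths} stays applicable with the \emph{same} parameters all the way down, and because the matched-up positions in $A_1$ and $B_1$ are images of the same original pair, each surviving sub-path extends (via the two extra $2$-matching edges) to a path between the correct $a_i$ and $b_i$. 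The $2$-matching also quietly supplies the extra $+2$ to the length in later rounds, which is what uses up the $[2\log_2 m+1,\,4\log_2 m+2]$ window. If you want to repair your version, the cleanest fix is to import this $2$-matching doubling step at the point where $|A|$ threatens to drop below $3\gamma n/\ln^c n$, rather than inventing a single-vertex expansion lemma that the paper does not provide.
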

\begin{proof}
We choose $\eps>0$ so that Corollary~\ref{cor:set_of_paths} is applicable on input $r$, $\gamma$ and $c$. Notice that we may then apply Lemma~\ref{lem:sampling} on input $r$, $\gamma$, $c'=2.5$ and $c$.

 The proof strategy will proceed in rounds. In each round we will connect half of the yet not connected pairs $(a_i,b_i)$. It is clear, that after $\log_2 m$ rounds the process will terminate. Next we turn to the technical details. We describe the first two rounds and it will become clear how we proceed in the remaining rounds. We assume that all paths should have odd length (as the case of even length is treated similarly, by choosing an additional set $U'_{2\log_2 m+1}$ of size $m$ in every round).

 We come to the \emph{first round}. We use Lemma~\ref{lem:sampling} to  consecutively choose pairwise disjoint subsets $U'_1$, \ldots, $U'_{2\log_2m}$ 
 of size $m$ of the set $U$ such that the following holds (throughout the proof, the parameter $\ell=O(\log_2^2 m)$ is the number of subsets of size $m$ or $2m$ that we chose from $U$ so far):
\begin{itemize}
\item[(*)] $\deg_{H}(u,U')\ge \left(\frac{1}{2^{r-1}}+3\gamma-\tfrac{\ell}{\ln^{c'} n}\right)p\binom{|U'|}{r-1}$ for every $u\in\{a_1,\ldots,a_m\}\cup\{b_1,\ldots, b_m\}\cup U$ and every $U'\in\left\{U'_1,\ldots, U'_{2\log_2m}, U\setminus\left(\cup_{i=1}^{2\log_2 m}U'_i\right)\right\}$.
\end{itemize}
It is clear, that Corollary~\ref{cor:set_of_paths} is applicable. By performing this step $1/(2\gamma)$ times, i.e.\ by choosing each time new $2\log_2m$ pairwise disjoint sets with the property above, we can connect half of the pairs $(a_i,b_i)$ by desired Berge paths. Observe that in this first round we sampled (with Lemma~\ref{lem:sampling}) at most  $(2/\gamma)\log_2 m$ many pairwise disjoint $m$-subsets of $U$. We delete the vertices of the sampled sets from $U$, but use the same notation for simplicity. This finishes the first round.

 We move next to the \emph{second round}. Let $I$ be an $m/2$-set of indices of pairs $(a_i,b_i)$ which haven't been yet connected. We still denote the vertices $\{a_i\colon i\in I\}$ by $A$ and similarly for $B$. We apply Lemma~\ref{lem:sampling} to choose two disjoint subsets $A_1$ and $B_1$ of $U$, each of cardinality $2m$ such that 
\[
\deg_H(a,A_1)\ge (1-\tfrac{1}{\ln^{c'} n})\left(\frac{1}{2^{r-1}}+3\gamma-\tfrac{\ell}{\ln^{c'} n}\right)p\binom{|A_1|}{r-1} 
\text{  for all  }a\in A,
\]
and 
\[
\deg_H(b,B_1)\ge (1-\tfrac{1}{\ln^{c'} n})\left(\frac{1}{2^{r-1}}+3\gamma-\tfrac{\ell}{\ln^{c'} n}\right)p\binom{|B_1|}{r-1} 
\text{  for all  }b\in B.
\]
Then Lemma~\ref{lem:doubling} asserts the existence of $2$-matchings $(e_a,f_a)_{a\in A}$ and $(e_b,f_b)_{b\in B}$ for $(A,A_1)$, for $(B,B_1)$ respectively. Let $\tau_A$ and $\tau_B$ be the injections for these $2$-matchings, cf.\ Definition~\ref{def:two_match}. Next we order the vertices of $A$ according to the index set $I=\{i_1,\ldots,i_{m/2}\}$ as follows: 
$\tau_A(e_{a_{i_1}}), \tau_A(f_{a_{i_1}})$, $\tau_A(e_{a_{i_2}}), \tau_A(f_{a_{i_2}})$,\ldots, $\tau_A(e_{a_{i_{m/2}}}), \tau_A(f_{a_{i_{m/2}}})$. Exactly in the same way we order the vertices of $B$. We are now back in the original situation: 
\emph{exactly} as in the first round we sample 
at most $(2/\gamma)\log_2 m$ many pairwise disjoint $m$-subsets of $U$, and thus, we 
find a system of $m/2$ Berge paths $P^{(2)}_1$, \ldots, $P^{(2)}_{m/2}$ between the sets $A_1$ and $B_1$.  Out of $m/2$ Berge paths we find at least $m/4$ so that no two of them have endpoints of the form $\tau_A(e_{a_{i_j}})$, $\tau_A(f_{a_{i_j}})$ or $\tau_B(e_{b_{i_s}})$, $\tau_B(f_{b_{i_s}})$ for some $j$ or $s$ respectively. 

Recall, that we have $2$-matchings for $(A,A_1)$ and for $(B,B_1)$. Thus, using these we extend the $m/4$ Berge paths to $m/4$ Berge paths between $A$ and $B$ that satisfy properties~\ref{prop1:connection} and~\ref{prop2:connection} of the lemma. Observe that in this second round we sampled (with Lemma~\ref{lem:sampling}) disjoint subsets of $U$
the union of which contains at most $(\tfrac{2}{\gamma})m\log_2 m + 4m < (\tfrac{3}{\gamma})m\log_2 m$ vertices.

There remain $m/4$ vertices of $A$ and $B$ to be matched, and the corresponding $m/2$ vertices of $A_1$ and $B_1$, which are connected by $2$-matchings respectively. We proceed in a similar way in the next rounds by finding $2$-matchings between appropriate sets, finding $m/2$ Berge paths and then identifying $m/2^i$ additional Berge paths in the $i$th round. After $\log_2 m$ rounds we find the desired system of $m$ Berge paths by using at most $(3/\gamma)m\log_2^2 m$ vertices. This finishes the proof.
\end{proof}

\section{Proof of Theorem~\ref{berge:thm:main}}\label{sec:main_theorem}
\subsection{Proof outline}
We explain the idea of the proof for weak Hamilton cycles and not to worry about the Berge property.
 
 Given a spanning subhypergraph $H\subseteq \cH\sim \Hrnp$ as in the statement of the theorem, we partition the vertex set of $H$ into disjoint sets $Y$, $Z$ and  $W$, where $|Y|=o(n)$, $|Z|=n/\log^{\mathcal O(1)} n$ and  $W$ contains almost all vertices of $H$. The set $Z$ assumes the role of a reservoir. Choosing a partition with such sizes uniformly at random guarantees that with positive probability for every vertex $v$  the edges incident to $v$ are distributed as expected into the sets $Y$, $Z$ and $W$ (cf.\ Lemma~\ref{lem:sampling}). Also, since $H$ is a subgraph of the random hypergraph, we have good control on the edge distribution among various  subsets of vertices (cf.\ Lemmas~\ref{lem:upper_disc} and~\ref{lem:upper_disc_II}). 

Next we construct a weak path $Q$ with $\vertices(Q) \subseteq Y$ such that for every subset $M \subseteq Z$ there exists a weak path $Q_M$ that has the same endpoints as $Q$ and such that $\vertices(Q_M) = \vertices(Q) \dcup M$ (we will use   Lemma~\ref{lem:targeted} to construct absorbers as described by Proposition~\ref{prop:absorber} and to connect them into the  path $Q$). This property (\emph{absorbing property}) will be crucial at a later stage of the argument. 

Then we partition $W$ randomly into $\log^{\mathcal O(1)}n$ sets and distribute $Y\setminus \vertices(Q)$ among them such that all of these sets have the same size $o(n/\log^{\mathcal O(1)} n)$. Informally speaking, since $|Y|$ is significantly smaller than $|W|$ and every vertex from $Y$ is "well-connected" to $W$, such partition allows us to find weak paths $P_1, \ldots, P_m$ (using Lemma~\ref{lem:matching}), with $m=n/\log^{\mathcal O(1)} n$, so that $\vertices(P_1)$, \ldots, $\vertices(P_m)$ form a partition of $W\dcup Y\setminus\vertices(Q)$. 

As a last step, we use vertices from $Z$ to connect the paths $P_1$,\ldots, $P_m$ and $Q$ into a weak cycle $C$ (again this is possible since every vertex of $H$ is ``well-connected'' into $Z$). Since the unused vertices $M$ of $Z$ can be absorbed by the path $Q$ into a weak path $Q_M$ with $\vertices(Q_M)=\vertices(Q)\dcup M$, we have found a weak Hamilton cycle in $H$ in this way. To construct the path $Q$ and to connect the paths $P_1$,\ldots, $P_m$ and $Q$ into a cycle we will repeatedly use a lemma (connecting lemma, Lemma~\ref{lem:targeted}) that will allow us to connect various vertices by paths of length $\mathcal O(\log n)$. 

\subsection{Rigorous details: proof of Theorem~\ref{berge:thm:main}}
We show the following result about robustness of pseudorandom hypergraphs.
\begin{theorem}\label{berge:thm:main_psrandom}
For every integer $r\geq 3$ and every real $\gamma >0$ there exists an $\eps>0$ such that the following holds 
for any $(\eps,p)$-pseudorandom hypergraph $\cH$ on $n$ vertices with $\Delta_2(\cH)\le 2\ln n$, $p=\frac{\ln^{\kk} n}{n^{r-1}}$ and $n$ sufficiently large.  Let $H \subseteq \mathcal H$ be a spanning subgraph with $\delta_1(H) \ge \left(\frac{1}{2^{r-1}} + \gamma\right) p \binom{n}{r-1}$. Then $H$ contains a Hamilton Berge cycle. 
\end{theorem}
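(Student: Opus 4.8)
The plan is to follow the absorbing-method strategy outlined just before the statement, turning the informal outline into a rigorous argument while tracking the Berge condition (distinct hyperedges) throughout. First I would fix $\gamma$ and choose $\eps>0$ small enough so that all of the lemmas invoked below (Lemma~\ref{lem:targeted}, Lemma~\ref{lem:matching}, Lemma~\ref{lem:doubling}, Lemma~\ref{lem:sampling}, Proposition~\ref{prop:codegree}) apply with parameter $\gamma/10$, say, and with the polylogarithmic exponent $c$ dictated by $\kk=17r$. Then, using Lemma~\ref{lem:sampling} repeatedly (or a single random partition together with a union bound), I would split $V(H)=Y\dcup Z\dcup W$ with $|Y|=n/\ln^{c_1}n$, $|Z|=n/\ln^{c_2}n$ and $|W|=n-|Y|-|Z|$ for suitable constants $c_1<c_2$ chosen so that the degree losses incurred at each sampling step stay below $\gamma/10$; this guarantees $\deg_H(v,X)\ge\left(\frac{1}{2^{r-1}}+\tfrac{9\gamma}{10}\right)p\binom{|X|}{r-1}$ for every $v\in V(H)$ and every $X\in\{Y,Z,W\}$, and similarly for the further sub-partitions used later.

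The core of the argument is the construction of the \emph{absorbing path} $Q$ inside $Y$ with reservoir $Z$. For each vertex $z\in Z$ I would build a $z$-absorber $A_z$ (Definition~\ref{def:absorber}) whose inner vertices other than $z$ lie in $Y$: concretely, pick a constant $t$, pick $2t$ vertices in $Y$ to play the roles $v_1,\dots,v_{2t}$, and use the Connecting Lemma (Lemma~\ref{lem:targeted}) inside $Y$ to realize both the Berge cycle $C$ through $z$ and the $t-1$ Berge paths $P_1,\dots,P_{t-1}$ with prescribed endpoints, all edge-disjoint and with pairwise disjoint inner vertex sets. Since Lemma~\ref{lem:targeted} produces $m$ edge-disjoint paths simultaneously and lets us prescribe even/odd lengths, I can handle all $z\in Z$ in parallel (grouping the required connections into a bounded number of applications of Lemma~\ref{lem:targeted}), obtaining $|Z|$ vertex-disjoint absorbers $\{A_z\}_{z\in Z}$ using only vertices of $Y$ besides the $z$'s, and with globally distinct hyperedges. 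A final application of Lemma~\ref{lem:targeted} (again inside $Y$, using the unused portion of $Y$ as the connecting set $U$) strings the main endpoints of the absorbers together into one Berge path $Q$; by Proposition~\ref{prop:absorber}, for every subset $M\subseteq Z$ the path $Q$ can be rerouted to a Berge path $Q_M$ with the same endpoints and $\vertices(Q_M)=\vertices(Q)\dcup M$. Leftover vertices of $Y$ not used by $Q$ are folded into $W$.

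Next I would cover $W\dcup(Y\setminus\vertices(Q))$ by a family of short weak/Berge paths. Partition this set (randomly, via Lemma~\ref{lem:sampling}, keeping degrees high) into pairs of equal-sized blocks and apply Lemma~\ref{lem:matching} to each pair to obtain $(U_1,U_2)$-matchings; chaining $O(1)$ such matchings produces paths $P_1,\dots,P_{m}$, $m=n/\ln^{O(1)}n$, whose inner vertex sets partition $W\dcup(Y\setminus\vertices(Q))$, all hyperedges distinct and disjoint from $E(Q)$ and from the edges already used in the absorbers (the random hypergraph is large enough that the codegree bound of Proposition~\ref{prop:codegree} lets us always avoid the bounded number of forbidden edges at each step — here I would argue that at every application there is still a valid choice because only $o$(total available edges) are banned). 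Finally, order $Q,P_1,\dots,P_m$ cyclically and use the reservoir $Z$ together with one more invocation of Lemma~\ref{lem:targeted} to link consecutive endpoints by short edge-disjoint Berge paths through $Z$, forming a Berge cycle $C^\ast$ on $V(H)\setminus M$ for some leftover $M\subseteq Z$; absorbing $M$ via $Q\rightsquigarrow Q_M$ upgrades $C^\ast$ to a spanning Berge cycle. I expect the main obstacle to be \textbf{bookkeeping of hyperedge-disjointness and degree losses simultaneously}: every application of a lemma both consumes hyperedges and, via the sampling, erodes the minimum-degree surplus, so the delicate point is to organize the $O(\log n)$-many absorber constructions, the matchings, and the connections into a constant number of "rounds" so that the total degree loss stays below $\gamma$ and the total number of forbidden hyperedges stays a lower-order term — this is exactly where the generous polylogarithmic slack in $p\ge\ln^{17r}n/n^{r-1}$ and in the sizes of $Y,Z$ is spent.
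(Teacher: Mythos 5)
Your high-level plan (partition into $Y,Z,W$, absorbers built inside $Y$ with reservoir $Z$, a path cover of $W$, and a final connection through $Z$) matches the paper, but there is a genuine gap in how you propose to build the absorbers. You write: \enquote{pick a constant $t$, pick $2t$ vertices in $Y$ to play the roles $v_1,\dots,v_{2t}$, and use the Connecting Lemma\ldots to realize both the Berge cycle $C$ through $z$ and the $t-1$ Berge paths.} This cannot work. In the regime $p=\ln^{17r}n/n^{r-1}$ the expected number of Berge cycles of any fixed length $2t+1$ through a given vertex is $\polylog(n)/n^{2t-1}=o(1)$, and the typical codegree of a pair is $\Theta(p\,n^{r-2})=o(1)$, so a.a.s.\ there is no Berge cycle of bounded length through $z$ at all, and you certainly cannot force a prescribed $2t$-tuple of vertices to form one. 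Moreover, Lemma~\ref{lem:targeted} is the only device available for producing Berge paths/cycles here and it outputs paths of length $\Theta(\log n)$ with inner vertices \emph{chosen by the lemma}, not prescribed by you. The correct move (as in the paper) is to accept $t=\Theta(\log n)$: invoke Lemma~\ref{lem:targeted} with $a_i=b_i=z$ to obtain the odd Berge cycle $C_z$ of length between $2\log_2|Z|+1$ and $4\log_2|Z|+1$ with inner vertices in a sampled subset of $Y$, read off the cycle sequence $(z,v_1,\dots,v_{2t})$, and only then apply Lemma~\ref{lem:targeted} a second time to connect the antipodal pairs $v_{i+1},v_{2t+1-i}$. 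This makes each absorber occupy $\Theta(\log^2 n)$ vertices of $Y$, which is exactly why the paper takes $|Y|=\Theta(|Z|\log^3 n)$; with a constant $t$ your size bookkeeping for $Y$ would be off by a polylogarithmic factor.

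Two smaller calibration issues: (1) chaining only $O(1)$ matchings over $W$ leaves you with $\Theta(n)$ paths, while $Z$ with $|Z|=n/\polylog(n)$ can afford only $O(n/\log^{2}n)$ connections through Lemma~\ref{lem:targeted}; you need to chain $\polylog(n)$ matchings (as the paper does with $\log^{c_2}n$ blocks $W_1,\dots,W_{\log^{c_2}n}$) so that the path count drops to $n/\polylog(n)$. (2) Your plan to secure edge-distinctness by \enquote{avoiding a bounded number of forbidden edges at each step} is heavier than necessary; the paper gets Berge-ness for free because every path is built from edges lying between consecutive disjoint partite sets with intersection pattern $\{1,r-1\}$, so edges produced at distinct stages are automatically distinct. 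You may want to adopt that cleaner bookkeeping.
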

Observe first that Theorem~\ref{berge:thm:main_psrandom} implies immediately Theorem~\ref{berge:thm:main} for the probability $p=\frac{\ln^{\kk} n}{n^{r-1}}$, since a.a.s.\ the random hypergraph $\Hrnp$ is $(\eps,p)$-pseudorandom by Lemmas~\ref{lem:upper_disc} and~\ref{lem:upper_disc_II} and also satisfies $\Delta_2(\cH)\le 2\ln n$ a.a.s., by Proposition~\ref{prop:codegree}. The  following easy proposition briefly shows how the statement then extends to all $p\ge \frac{\ln^{\kk} n}{n^{r-1}}$ in a straightforward way.
\begin{proposition}\label{prop:sparsification}
If Theorem~\ref{berge:thm:main} holds for $p=\frac{\ln^{c} n}{n^{r-1}}$ then it is also true for $p\ge \frac{\ln^{c} n}{n^{r-1}}$.
\end{proposition}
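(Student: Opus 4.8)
\textbf{Plan for Proposition~\ref{prop:sparsification}.}
The idea is the standard sparsification/coupling trick for monotone properties. The property ``there exists a spanning subgraph $H$ with $\delta_1(H)\ge(\frac1{2^{r-1}}+\gamma)p\binom{n}{r-1}$ that contains a Hamilton Berge cycle'' is \emph{not} literally monotone in $p$ because the minimum-degree threshold $p\binom{n}{r-1}$ grows with $p$; the whole point is to reduce to the case $p=p_0:=\frac{\ln^{c}n}{n^{r-1}}$ where the theorem is assumed to hold. So first I would fix an arbitrary $p\ge p_0$ and write $\cH=\Hrnp$. The plan is to couple $\cH$ with $\cH_0:=H^{(r)}(n,p_0)$ so that $\cH_0\subseteq\cH$: this is the textbook coupling, e.g.\ let each edge $e$ carry an independent uniform $x_e\in[0,1]$, put $e\in\cH_0$ iff $x_e\le p_0$ and $e\in\cH$ iff $x_e\le p$. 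Then a.a.s.\ $\cH_0$ has the property guaranteed by Theorem~\ref{berge:thm:main} at density $p_0$: every spanning $H_0\subseteq\cH_0$ with $\delta_1(H_0)\ge(\frac1{2^{r-1}}+\gamma')p_0\binom{n}{r-1}$ contains a Hamilton Berge cycle; here I would run the $p_0$-version with a slightly smaller constant $\gamma'=\gamma/2$ in place of $\gamma$, to absorb the small discrepancy described below. Condition on this a.a.s.\ event.

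Now take any spanning $H\subseteq\cH$ with $\delta_1(H)\ge(\frac1{2^{r-1}}+\gamma)p\binom{n}{r-1}$; I must produce a Hamilton Berge cycle in $H$, hence in $\cH$. Set $H_0:=H\cap\cH_0$, a spanning subgraph of $\cH_0$. The only thing to check is that $H_0$ still has large enough minimum degree relative to $p_0$. For a fixed vertex $v$, the edges of $H$ through $v$ that survive into $\cH_0$ are exactly those $e\ni v$ with $e\in H$ and $x_e\le p_0$; each surviving with probability $p_0/p$ conditionally (by the coupling, independently over $e$). Since $\deg_H(v)\ge(\frac1{2^{r-1}}+\gamma)p\binom{n}{r-1}$, the conditional expectation of $\deg_{H_0}(v)$ is at least $(\frac1{2^{r-1}}+\gamma)p_0\binom{n}{r-1}\ge\frac{\ln^{c}n}{2^{r-1}}+\ldots\to\infty$, which is polylog in $n$; a Chernoff bound (Theorem~\ref{thm:chernoff} with $\eps$ a small constant, or Theorem~\ref{chernoff1}) gives $\deg_{H_0}(v)\ge(\frac1{2^{r-1}}+\gamma/2)p_0\binom{n}{r-1}$ with probability $1-n^{-\omega(1)}$ (using $p_0\binom{n}{r-1}=\Theta(\ln^{c}n)$ with $c=\kk>1$), and a union bound over the $n$ vertices finishes it. One subtlety: the randomness here is over the coupling conditioned on the outcome of $\cH$ and on $H$; since $H$ is determined by $\cH$ (it is an arbitrary but fixed subgraph of the realized $\cH$) and the surviving-into-$\cH_0$ indicators are independent of which edges lie in $H$, the Chernoff estimate is legitimate. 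Hence a.a.s.\ $H_0$ has $\delta_1(H_0)\ge(\frac1{2^{r-1}}+\gamma/2)p_0\binom{n}{r-1}$, so by the $p_0$-case $H_0$ contains a Hamilton Berge cycle, which is a Hamilton Berge cycle of $H\supseteq H_0$, as required.

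The only mild obstacle is bookkeeping the two levels of randomness cleanly — the a.a.s.\ over $\cH_0$'s behaviour, and the a.a.s.\ over the thinning being well-concentrated for every vertex simultaneously — and making sure the loss $\gamma\to\gamma/2$ is harmless (it is, since Theorem~\ref{berge:thm:main} is quantified over \emph{every} $\gamma>0$, so we simply invoke it with $\gamma/2$). Everything else is routine: a single coupling, a single Chernoff-plus-union-bound, and monotonicity of ``contains a Hamilton Berge cycle'' under adding edges to $H$. I would present this in about half a page. \qed
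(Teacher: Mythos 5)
Your proposal is correct and follows essentially the same route as the paper: the paper's proof sparsifies $\cH$ by keeping each edge independently with probability $q=\frac{\ln^{c}n}{p\,n^{r-1}}$ (so $pq=\frac{\ln^c n}{n^{r-1}}$), uses Chernoff to show $\delta_1(H_q)\ge(\frac1{2^{r-1}}+\gamma/2)pq\binom{n}{r-1}$ a.a.s., and applies the base case with constant $\gamma/2$, which is exactly your coupling/thinning argument. Your explicit discussion of the two layers of randomness (the a.a.s.\ event for $\cH_0$ and the concentration of the thinning conditional on the realized $\cH$ and the chosen $H$) spells out a point the paper handles implicitly, but there is no substantive difference in approach.
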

\begin{proof}
Let $\cH\sim \Hrnp$ for some $p\ge \frac{\ln^{c} n}{n^{r-1}}$ and let $H\subseteq \cH$ be a subgraph of $\cH$ with minimum  vertex degree at least $\left(\frac{1}{2^{r-1}} + \gamma\right) p \binom{n}{r-1}$. We set $q:=\frac{\ln^{c} n}{p\cdot n^{r-1}}$ and denote by $G_q$ the random subgraph of $G$ where each edge is kept with probability $q$ independently of the other edges. Clearly, $\cH_q\sim H^{({r})}\left(n,qp\right)$ and $H_q\subseteq \cH_q$ and, by Chernoff's inequality (Theorem~\ref{chernoff1}) a.a.s.\ $\delta_1(H_q)\ge \left(\frac{1}{2^{r-1}} + \gamma/2\right) pq \binom{n}{r-1}$, and $\Delta_2(\cH_q)\le 2\ln n$ a.a.s.\ as well (Proposition~\ref{prop:codegree}). Hence, we may apply Theorem~\ref{berge:thm:main} in the special case when $pq=\frac{\ln^{c} n}{n^{r-1}}$ and the general claim follows.
\end{proof}
\begin{proof}[Proof of Theorem~\ref{berge:thm:main_psrandom}]\mbox{}\\
\noindent{}\textbf{Setup.} 
 W.l.o.g.\ we assume that $\gamma<2^{3-r}$ and we set $\gamma'=\gamma/4$. The following auxiliary parameters 
 $c_1:=4$, $c_2:=c_1+3$, $c':=7.4$ and $c=17$ are given here for future reference only and the aim of their use is not to obscure the technical details. We thus have $p=\frac{\ln^{cr} n}{n^{r-1}}$. Let $\eps_1$ be as asserted by Lemma~\ref{lem:matching} on input $r$, $\gamma'$ and $c$, let $\eps_2$  be as asserted by Lemma~\ref{lem:targeted} on input $r$, $\gamma'$ and $c$. We set $\eps:=\min\{\eps_1,\eps_2\}$.

  We apply Lemma~\ref{lem:sampling}  to $V(H)$ twice to obtain three pairwise-disjoint sets $W$, $Y$ and $Z$ with the following properties:
\begin{enumerate}[(i)]
\item $|Z|=\frac{n}{\log_2^{c_1} n}$ and $\deg(v,Z)\ge (2^{1-r}+4\gamma'-\tfrac{2}{\ln^{c'} n})p\binom{|Z|}{r-1}$ for all $v\in V$,
\item $|Y|=\tfrac{9}{\gamma'}|Z|\log_2^3n$ and $\deg(v,Y)\ge (2^{1-r}+4\gamma'-\tfrac{2}{\ln^{c'} n})p\binom{|Y|}{r-1}$ for all $v\in V$,
\item $|W|=V\setminus(Y\cup Z)$ and $\deg(v,W)\ge (2^{1-r}+4\gamma'-\tfrac{2}{\ln^{c'} n})p\binom{|W|}{r-1}$ for all $v\in V$.
\end{enumerate}

\noindent{}\textbf{Constructing an absorbing path $P_A$.} 
Our aim here is to  construct  absorbers for every $u\in Z$ and to put them into a single path. For this we will use vertices from $Y$, by applying Lemma~\ref{lem:sampling} followed by Lemma~\ref{lem:targeted} several times. More precisely, we construct for every $u\in Z$ a $u$-absorber $A_u$ such that the inner vertices of all $A_u$'s are pairwise disjoint and the edges are pairwise disjoint as well. We will do it in three stages. 

In the \emph{first stage} we apply Lemma~\ref{lem:sampling} to the set $Y$ and obtain a set $U_1$ of cardinality 
$\tfrac{3}{\gamma'}|Z|\log^2_2n$ such that 
\begin{enumerate}[(i)]
\item $\deg_H(v,U_1)\ge (2^{1-r}+3\gamma') p\binom{|U_1|}{r-1}$ for all $v\in V$, and
\item $\deg_H(v,Y\setminus U_1)\ge  (2^{1-r}+4\gamma'-\tfrac{3}{\ln^{c'}n}) p\binom{|Y|-|U_1|}{r-1}$ for all $v\in V$.
\end{enumerate}
We then apply Lemma~\ref{lem:targeted} (with $A,B=Z$ and $U=U_1$) to put each vertex $u\in Z$ on its own Berge cycle $C_u$ of \emph{odd} length between $2\log_2 |Z|+1$ and $4\log_2 |Z|+1$, in order to obtain the cycles
needed by Definition~\ref{def:absorber}.

 In the \emph{second stage} we need to connect the corresponding pairs of vertices on each of the Berge cycles $C_u$, where $u\in Z$, as specified in the property~\ref{def:absorber:paths} from the definition of the asorbers, Definition~\ref{def:absorber}. Observe that there are at most $2\log_2 |Z|-1$ such pairs for each absorber, which requires connecting in total at most $2|Z|\log_2|Z|$ pairs. We apply Lemma~\ref{lem:sampling} to the set $Y\setminus U_1$ and obtain a set $U_2$ of cardinality 
$\tfrac{6}{\gamma'}|Z|\log^3_2n$ such that 
\begin{enumerate}[(i)]
\item $\deg_H(v,U_2)\ge (2^{1-r}+3\gamma') p\binom{|U_2|}{r-1}$ for all $v\in V$, and
\item $\deg_H(v,Y\setminus (U_1\cup U_2))\ge  (2^{1-r}+4\gamma'-\tfrac{4}{\ln^{c'}n})  p\binom{|Y|-|U_1|-|U_2|}{r-1}$ for all $v\in V$.
\end{enumerate}
Again, an application of the connecting lemma, Lemma~\ref{lem:targeted}, yields the system of Berge paths, that completes for each $u\in Z$ an absorber $A_u$ with the required properties.

 Finally, in the \emph{third stage}, we put all our absorbers onto a Berge path. More precisely, we aim to connect the paths of all $A_u$s as specified in Proposition~\ref{prop:absorber}~\ref{eq:absorber:path} into a Berge path. 
   We consider  the main endpoints $u_i$ and $u_i'$ of every $u$-absorber and we wish to connect $u'_i$ with $u_{i+1}$ for every $i\in[|Z|-1]$ by edge disjoint Berge paths (whose inner vertex sets are pairwise disjoint as well), using again the connection lemma, Lemma~\ref{lem:targeted}. Recall, that by our choice of $Y$, we have $|Y\setminus (U_1\cup U_2)|\ge \tfrac{4}{\gamma'}|Z| \log^2_2n$ and $\deg_H(v,Y\setminus (U_1\cup U_2))\ge  (2^{1-r}+3\gamma') p\binom{|Y|-|U_1|-|U_2|}{r-1}$ for all $v\in V$. Therefore, all assumptions of Lemma~\ref{lem:targeted} are met, and we obtain thus a system of Berge paths that connects the required pairs of endpoints of the absorbers $A_u$ ($u\in Z$) into the \emph{absorbing Berge path} $P_A$. Observe that $V^*(P_A)\subseteq Z\cup Y$, the endpoints of $P_A$ are  $u_1$ and $u'_{|Z|}$ and the path $P_A$ has the following \emph{absorbing property} (by Proposition~\ref{prop:absorber}):
\begin{itemize}
\item for every subset $Z'\subseteq Z$, there exists a Berge path $P_{Z'}$ with the same endpoints as $P_A$ and $V^*(P_{Z'})=V^*(P_A)\setminus Z'$.
\end{itemize}
We will later use some vertices from $Z$ for further connections that come and we denote by $Y'$ the vertices of $Y$ which are neither inner vertices of $P_A$.
 
\noindent{}\textbf{Partitioning $W$.} 
As a next step we partition $W$ into $\log_2^{c_2}n$ sets $W_1$, \ldots, $W_i$ of the same cardinality plus a remainig set $M$ of fewer than $n/\log_2^{c_2}n$ vertices. 
 We do so by consecutively applying Lemma~\ref{lem:sampling}, so that  the sets $W_i$ will satisfy the following properties:
\begin{itemize}
\item $\deg(v,W_i)\ge (2^{1-r}+3\gamma')p\binom{|W_i|}{r-1}$ for all $v\in V$.
\end{itemize}
Next we distribute the vertices from $Y'$ equally among the sets $W_i$'s (but we still use the same notation for these new sets) and put the at most $\log_2^{c_2}n$ vertices to $M$. Observe that then still $\deg(v,W_i)\ge (2^{1-r}+2\gamma')p\binom{|W_i|}{r-1}$ for all $v\in V$ holds, by the choice of $c_1$, $c_2$ and the sizes of $Y'$ and $W$.

\noindent{}\textbf{Covering $W$ with Berge paths.} 
Now we apply Lemma~\ref{lem:matching} to obtain $(W_i,W_{i+1})$-matchings in $H$ for $i=1,\ldots,\log_2^{c_2}n$. Notice 
that this gives rise to a system of $(1-o(1))n/\log_2^{c_2}n$ edge-disjoint Berge paths, each of length $\log_2^{c_2}n$, so that the inner vertices of these paths are pairwise disjoint and form a partition of $W\setminus M$. To obtain a particular path, one starts with 
some vertex $w_1\in W_1$, then follows the matching edge $e_1\ni w_1$, then considers the second endpoint $w_2 \in e_1$, then follows the matching edge $e_2\ni w_2$ from the $(W_2,W_{3})$-matching in $H$ and so on. 

Additionally, we also view every single vertex $w\in M$ as a Berge path. Thus, in total we cover $W\cup Y'$ by $t$ Berge paths
 $P_1$, \ldots, $P_t$,  
where $t$ is less than  $2(n/\log_2^{c_2}n)+\log_2^{c_2}n$.
\medskip

\noindent{}\textbf{Obtaining a weak Hamilton cycle in $H$.} 
Recall, that $|Z|=\frac{n}{\log_2^{c_1} n}$ and $\deg(v,Z)\ge (2^{1-r}+3\gamma')p\binom{|Z|}{r-1}$ for all $v\in V$ holds. Thus, the assumptions of the connecting lemma, Lemma~\ref{lem:targeted}, are met. Therefore we are able  
 to connect the Berge paths  $P_1$, \ldots, $P_t$ and the absorbing Berge path $P_A$ into some `almost' cycle $C'$ by using the vertices of some subset $Z'$ from $Z$.  By the absorbing property, we may delete the vertices from $Z'$ and restructure the path $P_A$ (which is a subgraph of $C'$) into $P'$ so that the so obtained cycle $C$ is indeed a weak Hamilton cycle.

\noindent{}\textbf{Why is $C$ already a  Hamilton Berge cycle in $H$.} 
The cycle which we constructed in the way above is indeed Berge. The reason is that we build our cycle by constructing Berge paths between partite sets (cf.\ Lemmas~\ref{lem:matching},~\ref{lem:matching2} and~\ref{lem:targeted}) and in doing so we use only edges $e$ between some two sets $U_i$ and $U_{i+1}$ which lie within $U_i\cup U_{i+1}$ such that $|e\cap U_i|\in\{1,r-1\}$. In this way we guarantee that at any stage of our construction we are using genuinely new edges. Thus, the constructed cycle is indeed Berge. 
\end{proof}

\section{Berge Hamiltonicity in dense hypergraphs}\label{sec:Dirac}
In this section we prove Proposition~\ref{thm:DiracBerge}. 
 The asymptotic tightness of the bound on the minimum vertex degree was considered already in the introduction.  We remark that optimal bound was proven in a long paper in~\cite{CP18}, whereas the proof below is elementary and very short.

\begin{proof}[Proof of Proposition~\ref{thm:DiracBerge}]
Let $r\geq 3$ and let $H=(V,E)$ be an $r$-uniform hypergraph on $n > 2r-2$ vertices with $\delta_1(H) \ge  \binom{\lceil n/2 \rceil -1}{r-1}+ n-1$. We observe first that due to the large minimum vertex degree,  $H$ is connected. Let $P = (v_1, e_1, v_2, \ldots, e_{k-1}, v_k)$ be a longest Berge path in $H$. 

For every $v\in V$ we define $E'(v) = \big\{e\in E\setminus \{e_1, \ldots, e_{k-1}\}:\, v\in e\big\}$. The condition on the minimum vertex degree implies that we have $|E'(v_1)|, \, |E'(v_k)| \geq \binom{\lceil n/2\rceil -1}{r-1}$. 
Since $P$ is a longest Berge path, it holds for every $e\in E'(v_1)$ that $e \subseteq \vertices(P)$. The same is true for $v_k$. 

We claim that there exist distinct hyperedges $e \in E'(v_1)$ and $e' \in E'(v_k)$ as well as an index $i\in [k-1]$ such that $v_{i+1}\in e \cap \vertices(P)$ and $v_i \in e' \cap \vertices(P)$. Assume for a contradiction that this is not true. Then, by the pigeonhole principle, there exists a subset $S\subseteq [k-1]$ such that $|S| \leq \lfloor (k-1)/2 \rfloor$ with $f\subseteq \{v_{i+1}\colon i\in S\}\cup\{v_1\}$ for every $f\in E'(v_1)$ or with $f'\subseteq \{v_{i}\colon i\in S\}\cup\{v_k\}$ for every $f' \in E'(v_k)$. Suppose that $f\subseteq \{v_{i+1}\colon i\in S\}\cup\{v_1\}$ for every $f\in E'(v_1)$ holds. Then $\delta_1(v_1) \leq \binom{|S|-1}{r-1} +k -1 < \binom{\lceil n/2\rceil -1}{r-1} + n-1$, which is a contradiction.

Hence, there exist $e \in E'(v_1)$ and $e' \in E'(v_k)$ with the claimed property. Let 
\[
C= (v_1, e, v_{i+1}, e_{i+1}, v_{i+2}, \ldots, v_k, e', v_i, e_{i-1}, \ldots, e_1)
\]
 be the Berge cycle that can be constructed from $P$ using $e$ and $e'$. If $k= n$, then $C$ is a Hamilton Berge cycle and we are done. Otherwise we get a contradiction, similar as in Dirac's original proof, by breaking up the cycle $C$ and extending it by a new edge (since $H$ is connected), thus obtaining a longer Berge path.
\end{proof}

\bibliographystyle{abbrv} 	 
\bibliography{BergeCycles}	 
\end{document}